\documentclass{amsart}
\usepackage{graphicx,amssymb}
\usepackage{amsmath,amssymb,amscd}
\input xy
\xyoption{all}
\usepackage[all]{xy}
\usepackage{hyperref}

\newcommand{\ra}{\rightarrow}

\newcommand{\PP}{\mathbb P}

\newcommand{\cO}{\mathcal{O}}

\newcommand{\IM}{\mbox{im}}

\newcommand{\Hom}{\operatorname{Hom}}

\newcommand{\coker}{\mbox{coker}}
\newcommand{\rk}{\operatorname{rk}}
\newcommand{\Cl}{\operatorname{Cliff}}
\theoremstyle{plain}
\newtheorem{theorem}{Theorem}[section]
\newtheorem{lem}[theorem]{Lemma}
\newtheorem{prop}[theorem]{Proposition}
\newtheorem{cor}[theorem]{Corollary}

\newtheorem{rem}[theorem]{Remark}

\numberwithin{equation}{section}

\begin{document}
\title[Bundles of rank 3]{Bundles of rank 3 on curves of Clifford index 3}

\author{H. Lange}
\author{P. E. Newstead}

\address{H. Lange\\Department Mathematik\\
              Universit\"at Erlangen-N\"urnberg\\
              Cauerstrasse 11\\
              D-$91058$ Erlangen\\
              Germany}
              \email{lange@mi.uni-erlangen.de}
\address{P.E. Newstead\\Department of Mathematical Sciences\\
              University of Liverpool\\
              Peach Street, Liverpool L69 7ZL, UK}
\email{newstead@liv.ac.uk}

\thanks{Both authors are members of the research group VBAC (Vector Bundles on Algebraic Curves). The second author 
would like to thank the Department Mathematik der Universit\"at 
         Erlangen-N\"urnberg for its hospitality}
\keywords{Semistable vector bundle, Clifford index, gonality}
\subjclass[2000]{Primary: 14H60; Secondary: 14F05, 32L10}
\date{\today}
\begin{abstract}
Two definitions of the Clifford index for vector bundles on a smooth projective curve $C$ of genus $g\ge4$ were introduced in a previous paper by the authors.
In another paper the authors obtained results on one of these indices for bundles of rank 3. Here we extend these results in the case where $C$ has classical
Clifford index 3. In particular we prove Mercat's conjecture for bundles of rank 3 for $g \leq 8$ and $g \geq 13$ when $C$ has classical Clifford index 3. 
We obtain complete results in the case of genus 7.
\end{abstract}
\maketitle

\section{Introduction}

Let $C$ be a smooth irreducible projective curve of genus $g\ge4$ defined over an algebraically closed field of characteristic $0$. In previous papers we have defined the Clifford index $\Cl_n(C)$ for semistable 
vector bundles of rank $n$ on $C$. Some years ago Mercat conjectured in \cite{m} that this Clifford index is equal to the classical Clifford index $\Cl(C)$.
Recently counter-examples for this conjecture in ranks 2 and 3 have been discovered (see \cite{lmn}, \cite{fo1}, \cite{fo2}, \cite{ln4}, \cite{ln3}, \cite{cl1}, \cite{ln2}). 
On the other hand, we have $\Cl_n(C) = \Cl(C)$ for all $n$ if $\Cl(C) \leq 2$ (see \cite{m}) and $\Cl_2(C) = \Cl(C)$ if $\Cl(C) \leq 4$ (see \cite[Proposition 3.8]{ln}). 
For $n = 3$ the known counter-examples have $\Cl(C) \geq 4$. So the case $\Cl(C) = 3$ remains open.

In \cite{ln2} we proved that, when $\Cl(C) = 3$, then $\Cl_3(C) \geq \frac{8}{3}$, and also obtained a restricted list of cases in which the value $\frac{8}{3}$ was possible. 
In the present paper we eliminate many of these possibilities, in particular proving Mercat's conjecture in this case for $g \leq 8$ (Theorems \ref{thm4.6} and \ref{thm8}) 
and $g \geq 13$ (\cite[Theorem 6.8]{ln2}, Proposition \ref{prop3.3} and Theorem \ref{thm5.5}). For $g=7$ this was proved by Mercat in some unpublished notes. 

We use the methods and results of \cite{ln} and two ideas of Mercat, namely the dual span construction (see \eqref{eq2.6}) and the existence of a non-zero homomorphism $E \ra L^* \otimes K_C$
where $E$ is a rank-3 bundle satisfying certain conditions, $L$ a line bundle of minimal degree with $h^0(L) = 2$ and $K_C$ the canonical bundle (see Corollary \ref{cor2.4}). 
For the cases $g = 13$ and 14 we use also Hartshorne's version of Noether's Theorem for plane curves (see \cite[Theorem 2.1]{h} and Proposition \ref{prop5.1} below).

We investigate also the existence of rank-3 bundles $E$ with $h^0(E) =4$ or 5. These do not contribute to $\Cl_3(C)$, but they do contribute to another Clifford index $\gamma_3(C)$
as defined in \cite{ln}. For $h^0(E) = 4$ we have a good existence result (Propositions \ref{prop6.3} and \ref{p6.5}). For $h^0(E) = 5$ we have in general only necessary conditions for existence 
(Propositions \ref{prop7.8} and \ref{prop7.9}). For $g = 7$ we have a complete solution (Theorem \ref{thm8.6}).

Section 2 contains background material and is followed in Section 3 by some preliminary results which should have more general applications. In Sections 4 and 5 we discuss 
rank-3 bundles of degree 14 and 16. This is sufficient to show that $\Cl_3(C) = 3$ for any curve of genus 7, 8 or 15. In Section 6 we deal with the case of curves of genus 13 and 14. 
Section 7 is concerned with bundles with $h^0 = 4$ or $h^0 = 5$. In Section 8 we discuss bundles of degree 12 with $h^0 = 5$ and show that for a curve of Clifford index 3 the 
dual span construction gives a bijection between the set of stable rank-$3$ bundles of degree $12$ with $h^0 = 5$ and the set of stable 
rank-$2$ bundles of degree $12$ with $h^0 = 5$. Using this we obtain our result for genus 7.

We suppose throughout that $C$ is a smooth irreducible projective curve of genus $g$. We write $K_C$ for the canonical bundle on $C$. For any vector bundle $E$ on $C$, we write $d_E$ for 
the degree and $\mu(E)$ for the slope $\frac{d_E}{\rk E}$ of $E$. 

In the case $g=7$, the results of this paper were outlined by Vincent Mercat in some unpublished notes dated February 17, 2000. In particular Theorem 4.6 is due to him and the arguments in section 4 are generalisations of his arguments. Theorem 8.6 is also essentially due to Mercat.

\section{Background}

In this section we recall some definitions, notations and results from earlier papers.

For any vector bundle $E$ of rank $n$ on $C$, we define
$$
\gamma(E) := \frac{1}{n} \left(d_E - 2(h^0(E) -n)\right) = \mu(E) -2\frac{h^0(E)}{n} + 2.
$$
If $g \geq 4$, we then define, for any positive integer $n$,
$$
\Cl_n(C):= \min_{E} \left\{ \gamma(E) \;\left| 
\begin{array}{c} E \;\mbox{semistable of rank}\; n, \\
h^0(E) \geq 2n,\; \mu(E) \leq g-1
\end{array} \right. \right\}.
$$
Note that $\Cl_1(C) = \Cl(C)$ is the usual Clifford index of the curve $C$. 
We say that $E$ {\it contributes to} $\Cl_n(C)$ if $E$ is semistable of rank $n$ with $h^0(E) \geq 2n$ and $\mu(E) \leq g-1$. 
If in addition $\gamma(E) = \Cl_n(C)$, we say that $E$ {\it computes} $\Cl_n(C)$. 

The {\em gonality sequence} 
$d_1,d_2,\ldots,d_r,\ldots$ of $C$ is defined by 
$$
d_r := \min \{ d_L \;|\; L \; \mbox{a line bundle on} \; C \; \mbox{with} \; h^0(L) \geq r +1\}.
$$

We shall use the following facts without further reference.
\begin{itemize}

\item $\Cl(C)\le\left[\frac{g-1}2\right]$;

\item $d_1=\Cl(C)+2$ or $\Cl(C)+3$ (\cite[Section 2]{cm});

\item $\min\left\{\Cl(C)+2r,g+r-1\right\}\le d_r\le g+r-\left[\frac{g}{r+1}\right]$ (\cite[Remark 4.4(c) and Lemma 4.6]{ln});

\item if $\Cl(C)\le4$, then $\Cl(C)=\Cl_2(C)$ (\cite{m} and \cite[Proposition 3.8]{ln});

\item if $g\ge7$ and $\Cl_2(C)=\Cl(C)\ge2$, then $$\Cl_3(C)\ge\min\left\{\frac{d_9}3-2,\frac{2\Cl(C)+2}3\right\}$$
(\cite[Theorem 4.1]{ln2}).

\end{itemize}

\begin{lem}\label{lema}
\begin{itemize}
\item[(i)] The smallest degree for which there exist semistable bundles of rank $2$ with $h^0\ge3$ is  $d_2$;
\item[(ii)] if $d_2\ne 2d_1$, then all semistable bundles of rank $2$ and degree $d_2$ have $h^0\le3$ and there exists such a bundle with $h^0=3$;
\item[(iii)] if $d<\min\left\{d_3,\frac{3d_2}2\right\}$, then any semistable bundle of rank $3$ and degree $d$ has $h^0\le3$;
\item[(iv)] if $\frac{d_3}3\le\frac{d_2}2$ and $d_3\ne 3d_1$, then all semistable bundles of rank $3$ and degree $d_3$ have $h^0\le4$ and there exists such a bundle with $h^0=4$;
\item[(v)] if $d_3=3d_1$, then there exists a semistable bundle of degree $d_3$ with $h^0=6$.
\end{itemize} \end{lem}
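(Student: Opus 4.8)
The unifying idea is to study, for a semistable bundle $E$ of rank $n$ (here $n=2$ or $3$), the subsheaf $F\subseteq E$ generated by the global sections, and to run a case analysis on $\rk F\in\{1,\dots,n\}$. In each case one plays the slope constraints from semistability ($\mu(L)\le\mu(E)$ for subbundles $L$, and $\mu(Q)\ge\mu(E)$ for quotients $Q$) against the defining property of the gonality sequence: a line bundle with $h^0\ge r+1$ has degree $\ge d_r$. The extremal (maximal-rank) case is handled by the dual span construction \eqref{eq2.6}: choosing an $(n+1)$-dimensional subspace $W\subseteq H^0(E)$ that generically generates $E$, the kernel bundle $M_W$ in $0\to M_W\to W\otimes\cO\to E\to0$ has rank $1$, so its dual $M_W^*$ is a \emph{line bundle} of degree $\le d_E$ with $h^0(M_W^*)\ge n+1$ (the inclusion $W^*\hra H^0(M_W^*)$ holding because positive slope forces $\Hom(E,\cO)=0$); hence $d_E\ge d_n$.

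For the lower bounds underlying (i), (iii) and (iv) I would argue as follows. If the sections span a line subbundle, its saturation $\bar L$ has $h^0\ge n+1$, so $d_{\bar L}\ge d_n$, and semistability gives $d_E\ge n\,d_{\bar L}\ge d_n$. If they generate all of $E$ generically, the dual span above gives $d_E\ge d_n$ directly. The intermediate rank is the only delicate case: here one passes to the maximal-degree sub-line-bundle $0\to L\to\bar F\to M\to0$ of the saturated generated subsheaf $\bar F$, bounds $h^0(L)$ and $h^0(M)$ separately through the $d_r$, and closes the argument using the universal inequalities among the gonality numbers (such as $d_2\le2d_1$). For instance, in (iii) the hypothesis $d<\min\{d_3,\tfrac{3d_2}2\}$ is exactly what is needed: $d<d_3$ rules out $\rk\bar F=1$ and the generically generated case, while $d<\tfrac{3d_2}2$ rules out $\rk\bar F=2$ via the split $h^0(\bar F)\le h^0(L)+h^0(M)$.

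The upper bounds (ii) and (iv) are proved at the extremal degree $d_2$, resp.\ $d_3$, by assuming $h^0\ge n+2$ and deriving a contradiction. Here the dual span no longer lands in a line bundle, so I would instead analyse the maximal-degree sub-line-bundle $L\subseteq E$ and its quotient: necessarily $h^0(L)\le n$ (a larger $h^0$ would force $d_L\ge d_n$, violating $d_L\le\mu(E)$), so the quotient carries most of the sections; tracking degrees, and using that $E$ is effective (hence $d_L\ge0$), pins the configuration down to a split or polystable bundle. The hypotheses $d_2\ne2d_1$ in (ii) and $d_3\ne3d_1$ in (iv) enter precisely to exclude $L_1^{\oplus n}$ (with $L_1$ computing $d_1$), the unique extremal counterexample with $h^0=2n$. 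Conversely this same bundle proves (v): when $d_3=3d_1$, the polystable $L_1^{\oplus3}$ has degree $d_3$ and $h^0=6$.

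For the existence statements I would take a line bundle $N$ computing $d_r$ (so $h^0(N)=r+1$, $d_N=d_r$) and form the dual span $M_N^*$, a bundle of rank $r$ and degree $d_r$ with $h^0\ge r+1$; semistability of $M_N^*$ follows from the known (Butler-type) semistability of kernel bundles, and the upper bounds just proved force $h^0=r+1$, giving the asserted bundles with $h^0=3$ in (ii) and $h^0=4$ in (iv). The main obstacle, where I expect the real work to lie, is the upper bound in (iv): with $\rk E=3$ and $h^0\ge5$ the generically generated case is not killed by a single line-bundle dual span, and one must combine the sub-line-bundle analysis with a reduction to a \emph{rank-$2$} dual span and the rank-$2$ statements (i)--(ii), all while the coincidence $d_3=3d_1$ is used to remove the polystable extremal case. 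A secondary technical point throughout is the bookkeeping of base points (passing to the generated subsheaf only lowers the degree, which is harmless for the lower bounds) and the verification that $\Hom(E,\cO)=0$, so that the dual span does not lose sections.
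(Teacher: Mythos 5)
The paper does not actually prove this lemma: its ``proof'' is the single sentence that all five statements are contained in \cite[Proposition 4.11 and Theorem 4.15]{ln}. Your outline reproduces, in essence, the strategy of that cited reference: the dichotomy on whether $E$ has a proper subbundle with $h^0>\rk$ (your case analysis on the rank of the subsheaf generated by sections), the Paranjape--Ramanan bound $d_E\ge d_{ns}$ of Lemma \ref{lemPR} in the remaining case, the dual span construction for existence, and $L_1^{\oplus 3}$ for (v). Statement (v), the lower bounds in (i) and (iii), and the upper bound in (ii) are essentially complete in your sketch, modulo routine details.

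There are, however, two genuine gaps. First, in the existence parts of (ii) and (iv) you justify semistability of the dual span $D(N)$ of a line bundle $N$ computing $d_2$ resp.\ $d_3$ by appealing to ``Butler-type semistability of kernel bundles''. Butler's theorem needs $d_N\ge 2g$ (or genericity hypotheses that fail badly here, since $C$ has Clifford index $3$ and $N$ is a special linear series of degree far below $2g$), so it gives nothing in this range. The correct argument --- and the reason the hypothesis $\frac{d_3}3\le\frac{d_2}2$ appears in (iv), a point your proposal does not account for --- is elementary: every quotient $Q$ of $D(N)$ is generated with $h^0(Q^*)=0$, hence $h^0(Q)\ge \rk Q+1$, hence (after a short d\'evissage) $d_Q\ge d_{\rk Q}$, and the chain $d_1\ge\frac{d_2}2\ge\frac{d_3}3$ then yields $\mu(Q)\ge\mu(D(N))$. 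Second, the upper bound in (iv), which you explicitly leave open as ``where the real work lies'', does close, but not by a rank-$2$ dual span: if $E$ has no proper subbundle with $h^0>\rk$, Lemma \ref{lemPR} with $s=2$ gives $d_E\ge d_6>d_3$ at once; if $E$ has a rank-$2$ subbundle $F$ with $h^0(F)=3$ (and no line subbundle with $h^0\ge2$), then $d_3=d_E\ge\frac32 d_F\ge\frac32 d_2\ge d_3$ forces $d_F=d_2=\frac{2d_3}3$, whence $h^0(E/F)\ge 2$ and $\frac{d_3}3=d_{E/F}\ge d_1$, i.e.\ $d_3=3d_1$, which is exactly what the hypothesis excludes. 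A smaller inaccuracy: the injectivity of $W^*\to H^0(M_W^*)$ requires $\Hom(F,\cO_C)=0$ for the image subsheaf $F$, not merely $\Hom(E,\cO_C)=0$; this is supplied by the surrounding case analysis (a nonzero map $F\to\cO_C$ is surjective because $F$ is generated, and its kernel is a subsheaf of $E$ of rank $n-1$ with $h^0\ge n$), not by the slope of $E$.
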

\begin{proof} These results are all included in \cite[Proposition 4.11 and Theorem 4.15]{ln}.
\end{proof}

The following lemma is due to Paranjape and Ramanan (see \cite[Lemma 3.9]{pr} or \cite[Lemma 4.8]{ln}).

\begin{lem} \label{lemPR}
Let $E$ be a vector bundle of rank $n$ with $h^0(E) \geq n+s,\; s\geq 1$. Suppose that $E$ has no proper subbundle $N$ with $h^0(N) > \rk N$. Then $h^0(\det E) \geq ns+1$ and hence $
d_E \geq d_{ns}
$. 
\end{lem}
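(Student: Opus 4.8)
The plan is to fix a subspace $W\subseteq H^0(E)$ of dimension exactly $n+s$ and to exhibit $ns+1$ linearly independent sections of $\det E=\Lambda^n E$ lying in the image of the wedge map $\Lambda^n W\ra H^0(\det E)$. Once this is done the final assertion is immediate: $\det E$ is a line bundle of degree $d_E$ with $h^0(\det E)\ge ns+1$, so $d_E\ge d_{ns}$ by the definition of the gonality sequence.

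The first step, which is where the hypothesis enters, is a generic independence statement. Given $k\le n$ linearly independent sections in $W$, let $N$ be the saturation of the subsheaf they generate, a subbundle of rank $r\le k$ with $h^0(N)\ge k$. If $N$ were proper the hypothesis would give $h^0(N)\le r$, forcing $r=k$; otherwise $N=E$ and $r=n\ge k$, again with the sections spanning the generic fibre. In either case $k$ linearly independent sections remain linearly independent at a general point. In particular the evaluation $\mathrm{ev}_p\colon W\ra E_p$ is surjective for general $p$, with kernel $K_p$ of dimension $s$, and $\omega_0:=e_1\wedge\cdots\wedge e_n$ is a nonzero section of $\det E$ for any basis $e_1,\dots,e_{n+s}$ of $W$.

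Next I would write down the explicit candidates. Besides $\omega_0$, for $1\le i\le n$ and $1\le j\le s$ let $\omega_{ij}$ be $\omega_0$ with the factor $e_i$ replaced by $e_{n+j}$; this gives $ns+1$ sections of $\det E$ in all. Evaluating at a general point $p$, the frame $e_1(p),\dots,e_n(p)$ is a basis of $E_p$, so $e_{n+j}(p)=\sum_i a_i^{(j)}(p)\,e_i(p)$ for rational functions $a_i^{(j)}$ on $C$, and expanding the wedge gives $\omega_{ij}(p)=a_i^{(j)}(p)\,\omega_0(p)$. Hence a linear relation $c_0\omega_0+\sum_{i,j}c_{ij}\omega_{ij}=0$ is equivalent, after dividing by the generically nonvanishing $\omega_0(p)$, to the identity $c_0+\sum_{i,j}c_{ij}a_i^{(j)}=0$ in the function field $K(C)$.

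The main obstacle is to show that this identity forces all the coefficients to vanish, i.e. that the $ns+1$ rational functions $1$ and the $a_i^{(j)}$ are linearly independent over the ground field; equivalently, that the classifying map $p\mapsto K_p$ into the Grassmannian $\mathrm{Gr}(s,W)$ is sufficiently non-degenerate. This is exactly where the full strength of the hypothesis is needed, not merely on line subbundles: I expect to argue that a nontrivial relation among the $a_i^{(j)}$ produces either a nonzero section of $E$ vanishing identically or a proper subbundle $N\subset E$ with $h^0(N)>\rk N$, each a contradiction. Establishing this non-degeneracy, rather than the bookkeeping surrounding it, is the heart of the argument.
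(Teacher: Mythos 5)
The paper does not actually prove this lemma: it quotes it from Paranjape--Ramanan (\cite[Lemma 3.9]{pr}, restated as \cite[Lemma 4.8]{ln}), so there is no internal proof to compare with. Judged on its own terms, your proposal is sound up to a point. The observation that any $k\le n$ linearly independent sections of $W$ remain independent at a general point is a correct use of the hypothesis (from $k\le h^0(N)\le\rk N\le k$ one gets $\rk N=k$), and the reduction of the independence of $\omega_0$ and the $\omega_{ij}$ to the linear independence, over the constants, of $1$ and the $ns$ rational functions $a_i^{(j)}$ is correct bookkeeping.

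However, you stop exactly at the step that carries all the content, saying only that you ``expect to argue'' that a nontrivial relation $c_0+\sum_{i,j}c_{ij}a_i^{(j)}=0$ yields a forbidden subbundle. That is a genuine gap, not a routine verification, and the route you sketch does not obviously close it. The construction works easily only for a relation confined to a single $j$: if $c_0+\sum_i c_ia_i^{(j)}=0$ with some $c_{i_0}\ne 0$, one solves for $a_{i_0}^{(j)}$ and finds $n$ linearly independent elements of $W$ generating a subsheaf of generic rank $\le n-1$, whose saturation $N$ is proper with $h^0(N)\ge n>\rk N$. But for a relation genuinely mixing several values of $j$ --- after normalising the coefficient matrix one is left with something like $a_1^{(1)}+a_2^{(2)}=\mathrm{const}$ --- no subbundle of $E$ drops out of the identity in any evident way, since the relation couples coordinates of different sections $e_{n+j}$ with respect to the moving frame. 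This is precisely why the cited proof does not proceed by a direct non-degeneracy argument in the Grassmannian but by induction on the rank: one takes the line subbundle $L$ generated by a generic section $e\in W$ (so $h^0(L)=1$ by hypothesis), checks that $E/L$ with $W/\langle e\rangle$ inherits the hypothesis, and uses $\det E\simeq L\otimes\det(E/L)$ to assemble the count. You should either supply the missing independence argument in full generality or restructure the proof along these inductive lines; as written, the heart of the lemma is asserted rather than proved.
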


\section{Preliminaries}

Let $C$ be a smooth irreducible projective curve of genus $g \geq 4$ over an algebraically closed field of characteristic 0.

\begin{lem} \label{lem2.1} {\em (Mercat)}
Let $E$ be a vector bundle of rank $n$ such that $\gamma(E) < \Cl(C)$ and let $L$ be a line bundle on $C$ of degree $d_1$ with $h^0(L) = 2$.
Then either
$$
h^0(L^* \otimes E) > 0 \quad \mbox{or} \quad h^0(E^* \otimes L^* \otimes K_C) > 0.
$$
\end{lem}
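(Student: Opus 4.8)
The plan is to argue by contradiction. Suppose that both $h^0(L^* \otimes E) = 0$ and $h^0(E^* \otimes L^* \otimes K_C) = 0$; I will derive $\gamma(E) \geq \Cl(C)$, contradicting the hypothesis. The first step is to rewrite the second vanishing. Applying Serre duality to the rank-$n$ bundle $E \otimes L$ gives $h^0(E^* \otimes L^* \otimes K_C) = h^1(E \otimes L)$, so the two standing assumptions become $h^0(E \otimes L^*) = 0$ and $h^1(E \otimes L) = 0$.

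Next I would bring in the two sections of $L$ via the base-point-free pencil trick. Because $d_1$ is the minimal degree of a line bundle with at least two independent sections, $L$ is base-point-free (any base point $p$ would yield $h^0(L(-p)) = 2$ with $\deg(L(-p)) < d_1$, contradicting minimality). Thus the evaluation map $\mathcal{O}_C^2 \to L$ is surjective with kernel $L^*$, giving the short exact sequence
$$0 \longrightarrow L^* \longrightarrow \mathcal{O}_C^2 \longrightarrow L \longrightarrow 0.$$
Tensoring by $E$ and passing to the long exact cohomology sequence, the assumption $h^0(E \otimes L^*) = 0$ forces the map $H^0(E)^2 \to H^0(E \otimes L)$ to be injective, whence $h^0(E \otimes L) \geq 2 h^0(E)$.

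The third step is purely numerical. Since $h^1(E \otimes L) = 0$, Riemann--Roch yields $h^0(E \otimes L) = d_E + n d_1 + n(1 - g)$. Feeding this into $h^0(E \otimes L) \geq 2 h^0(E)$ and using $2 h^0(E) = d_E + 2n - n\gamma(E)$ (immediate from the definition of $\gamma$), the terms in $d_E$ cancel and, after division by $n$, I obtain $\gamma(E) \geq g + 1 - d_1$.

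Finally I would close the argument with the recorded inequalities for the gonality and the Clifford index. One has $d_1 \leq g + 1 - \left[\frac{g}{2}\right]$ from the upper bound on the gonality sequence, while $\Cl(C) \leq \left[\frac{g-1}{2}\right] \leq \left[\frac{g}{2}\right]$; combining these gives $d_1 \leq g + 1 - \Cl(C)$, that is $g + 1 - d_1 \geq \Cl(C)$. Therefore $\gamma(E) \geq \Cl(C)$, contradicting $\gamma(E) < \Cl(C)$, and the lemma follows. I expect the base-point-free pencil trick and the Riemann--Roch bookkeeping to be routine; the only step needing genuine care is this last reconciliation, where the contradiction appears only once the bounds tying $d_1$, $\Cl(C)$ and $\left[\frac{g}{2}\right]$ together are invoked in exactly the right form.
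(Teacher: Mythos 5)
Your proof is correct and is essentially the paper's argument run in contrapositive form: the same base-point-free pencil trick combined with Riemann--Roch and Serre duality reduces the lemma, in both versions, to the single numerical requirement $d_1 \le g+1-\Cl(C)$. The only difference is the endgame: the paper verifies this via $d_1=\Cl(C)+2$ or $\Cl(C)+3$ (using the extra fact that $\Cl(C)<\frac{g-2}{2}$ in the latter case), whereas you deduce it directly from $d_1\le g+1-\left[\frac{g}{2}\right]$ and $\Cl(C)\le\left[\frac{g-1}{2}\right]$, which is a valid and slightly cleaner route that avoids the case split.
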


\begin{proof}
The line bundle $L$ is globally generated, so we have an exact sequence
$$
0 \ra L^* \ra H^0(L) \otimes \cO_C \ra L \ra 0.
$$
Tensoring with $E$, taking global sections, applying Riemann-Roch and finally the assumption, we get
\begin{eqnarray*}
2h^0(E) & \leq & h^0(L^* \otimes E) + h^0(L \otimes E)\\ 
& = & h^0(L^* \otimes E) + h^0(E^* \otimes L^* \otimes K_C) + d_E + nd_1 - n(g-1)\\
& < & h^0(L^* \otimes E) + h^0(E^* \otimes L^* \otimes K_C) +\\
&& \hspace{2cm}+ 2(h^0(E) -n) + n \Cl(C) + nd_1 - n(g-1).
\end{eqnarray*}
This gives
$$
h^0(L^* \otimes E) + h^0(E^* \otimes L^* \otimes K_C) > 2n -n \Cl(C) -nd_1 +n(g-1).
$$
Now note that $d_1 = \Cl(C) + i$ with $i = 2$ or 3. 
If $d_1 = \Cl(C) + 2$, then 
$$
h^0(L^* \otimes E) + h^0(E^* \otimes L^* \otimes K_C) > -2n \Cl(C) +n(g-1) \geq 0.
$$
If $d_1 = \Cl(C) + 3$, then
$$
h^0(L^* \otimes E) + h^0(E^* \otimes L^* \otimes K_C) > -2n\Cl(C) +n(g-1) -n.
$$
But in this case $\Cl(C) < \frac{g-2}{2}$. So again $h^0(L^* \otimes E) + h^0(E^* \otimes L^* \otimes K_C)$ is positive.
\end{proof}

\begin{lem} \label{observ}
 Suppose $E$ is a semistable bundle of rank $n$ such that
$$
2(n-1) + n\Cl_n(C) \leq d_E \leq 2ng - 2(2n-1) - n\Cl_n(C).
$$
Then
$$
\gamma(E) \geq \Cl_n(C).
$$
\end{lem}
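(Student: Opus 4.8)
The plan is to reduce everything to the definition of $\Cl_n(C)$ as a minimum over \emph{contributing} bundles, and to show that under the stated degree bounds a rank-$n$ semistable $E$ either contributes or satisfies $\gamma(E) \geq \Cl_n(C)$ for an elementary reason. A semistable $E$ fails to contribute precisely when $h^0(E) < 2n$ or $\mu(E) > g-1$, so I would split into these two obstructions. If $E$ does contribute, there is nothing to prove: $\gamma(E) \geq \Cl_n(C)$ by definition.

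First I would dispose of the case $h^0(E) \leq 2n-1$. Here $h^0(E) - n \leq n-1$, so directly from the formula $\gamma(E) = \frac1n\left(d_E - 2(h^0(E)-n)\right)$ one gets $\gamma(E) \geq \frac1n\left(d_E - 2(n-1)\right)$. The lower bound $d_E \geq 2(n-1) + n\Cl_n(C)$ then yields $\gamma(E) \geq \Cl_n(C)$ immediately. Note this argument uses only the left-hand inequality of the hypothesis.

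It remains to treat $h^0(E) \geq 2n$ together with $\mu(E) > g-1$. The key device is the Serre-dual twist $E' := E^* \otimes K_C$, which is again semistable of rank $n$. By Riemann--Roch and Serre duality $h^0(E') = h^1(E) = h^0(E) - d_E + n(g-1)$, and a short computation shows the invariance $\gamma(E') = \gamma(E)$; moreover $\mu(E') = 2g-2-\mu(E) < g-1$. Thus $E'$ already satisfies the slope condition. If in addition $h^0(E') \geq 2n$, then $E'$ contributes and $\gamma(E) = \gamma(E') \geq \Cl_n(C)$. Otherwise $h^0(E') \leq 2n-1$, and I would apply the elementary estimate of the previous paragraph to $E'$: here $d_{E'} = 2ng - 2n - d_E$, and the \emph{upper} bound $d_E \leq 2ng - 2(2n-1) - n\Cl_n(C)$ translates into exactly $d_{E'} \geq 2(n-1) + n\Cl_n(C)$, so the same computation gives $\gamma(E) = \gamma(E') \geq \Cl_n(C)$.

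The substantive points, rather than genuine obstacles, are the two facts that make the two ends of the degree interval symmetric: the identity $\gamma(E^*\otimes K_C) = \gamma(E)$ and the fact that $E \mapsto E^*\otimes K_C$ preserves semistability. Everything else is the observation that the two explicit bounds in the hypothesis are precisely calibrated so that the elementary low-$h^0$ estimate applies to $E$ at the lower end and to $E'$ at the upper end.
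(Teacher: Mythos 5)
Your proof is correct and follows essentially the same route as the paper: the elementary bound $\gamma(E)\geq\frac{1}{n}(d_E-2(n-1))$ when $h^0(E)<2n$, combined with the Serre twist $E^*\otimes K_C$, the invariance $\gamma(E^*\otimes K_C)=\gamma(E)$, and the observation that the upper degree bound on $E$ is the lower degree bound on the twist. The paper merely organizes the cases by $h^0$ and $h^1$ rather than by the slope condition, which is an immaterial difference.
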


\begin{proof}
If $h^0(E) \geq 2n$ and $h^1(E) \geq 2n$, this follows from the definition of $\Cl_n(C)$. If $h^0(E) < 2n$, we have
\begin{eqnarray*}
n \gamma(E) &=& d_E -2(h^0(E) -n)\\
& \geq & d_E - 2(n-1) \geq n \Cl_n(C)  
\end{eqnarray*}
by hypothesis. If $h^1(E) < 2n$, we use the fact that $\gamma(K_C \otimes E^*) = \gamma(E)$ and the second inequality in the statement.
\end{proof}

\begin{lem} \label{lem2.2}
Suppose $g \geq 7$ and $2 \leq \Cl_2(C)=\Cl(C) \leq 6$. If $E$ is a bundle contributing to $\Cl_3(C)$ with $\gamma(E) < \Cl(C)$, 
then $E$ has no line subbundle $L$ with 
$d_L \geq \Cl(C)$. 
\end{lem}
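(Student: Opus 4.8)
The plan is to argue by contradiction: suppose $E$ has a line subbundle $L$ with $d_L \ge \Cl(C)$. Replacing $L$ by its saturation (which has degree $\ge d_L$ and is still a subbundle of the semistable $E$), I may assume $F := E/L$ is a bundle of rank $2$, fitting in $0 \to L \to E \to F \to 0$. Since $h^0(E) \le h^0(L) + h^0(F)$, the definition of $\gamma$ gives a clean reduction: if I can show $\gamma(L) \ge \Cl(C)$ and $\gamma(F) \ge \Cl(C)$, i.e. $2h^0(L) \le d_L - \Cl(C) + 2$ and $2h^0(F) \le d_F - 2\Cl(C) + 4$, then adding these yields $2h^0(E) \le d_E - 3\Cl(C) + 6$, that is $\gamma(E) \ge \Cl(C)$, contradicting the hypothesis. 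So everything reduces to these two Clifford-type bounds.

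The bound for $L$ is straightforward. Semistability of $E$ and $\mu(E) \le g-1$ give $d_L \le \mu(E) \le g-1$, so $d_L$ lies in $[\Cl(C), 2g-2-\Cl(C)]$. For any line bundle $A$ with degree in this range, $\gamma(A) = d_A - 2(h^0(A)-1) \ge \Cl(C)$: this is the classical Clifford inequality when $h^0(A), h^1(A) \ge 2$, is immediate when $h^0(A) \le 1$, and follows from $\gamma(A) = \gamma(K_C \otimes A^*)$ when $h^1(A) \le 1$. Hence $\gamma(L) \ge \Cl(C)$.

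The real content, and the step I expect to be the main obstacle, is $\gamma(F) \ge \Cl(C)$, because $F$ need not be semistable. If $F$ is semistable, I would invoke Lemma \ref{observ} with $n=2$, using $\Cl_2(C) = \Cl(C)$: its degree window $2+2\Cl(C) \le d_F \le 4g-6-2\Cl(C)$ holds on the right since $d_F \le 3(g-1)-\Cl(C)$ and $g \ge \Cl(C)+3$ (as $\Cl(C) \le (g-1)/2$); the only gap is the low-degree case $d_F \le 2\Cl(C)+1$, where either $h^0(F) \ge 4$ (so $F$ contributes to $\Cl_2(C)$ and $\gamma(F) \ge \Cl(C)$ anyway) or $h^0(F) \le 3$, in which case $h^0(E) \ge 6$ forces $h^0(L) \ge 3$, hence $d_L \ge \Cl(C)+4$ by the line-bundle bound, contradicting $d_L \le \mu(E) \le \Cl(C)+\frac12$. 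If instead $F$ is unstable, let $M \subset F$ be its maximal destabilizing line subbundle and $N = F/M$; then $h^0(F) \le h^0(M)+h^0(N)$ and the same summation reduces me to $\gamma(M), \gamma(N) \ge \Cl(C)$. Semistability of $E$ confines both degrees to the admissible range: the preimage of $M$ in $E$ is a rank-$2$ subbundle, so $d_L + d_M \le 2\mu(E) \le 2(g-1)$ bounds $d_M$ above, while $N$ is a quotient line bundle of the semistable $E$, so $d_N \ge \mu(E) \ge \Cl(C)$ bounds it below; combined with $\Cl(C) \le d_L < d_M$ and $d_N < d_M$, both $d_M$ and $d_N$ lie in $[\Cl(C), 2g-2-\Cl(C)]$, so the line-bundle inequality of the second paragraph applies and gives $\gamma(F) \ge \Cl(C)$, completing the contradiction.
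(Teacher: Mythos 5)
Your proof is correct, and its skeleton is the same as the paper's: assume $L$ exists, filter $E$ by $L$ and, if $E/L$ is unstable, further by the destabilizing line subbundle $M$, establish $\gamma\ge\Cl(C)$ for each graded piece via the degree-window criterion of Lemma \ref{observ}, and add up. The genuine difference lies in how the lower ends of the degree windows are certified. The paper first derives $d_E\ge 2\Cl(C)+8$ from the bound $\Cl_3(C)\ge\min\left\{\frac{d_9}{3}-2,\frac{2\Cl(C)+2}{3}\right\}$ of \cite[Theorem 4.1]{ln2} together with $d_9\ge\min\{\Cl(C)+18,g+8\}$, and from this gets $d_{E/L}\ge 2\Cl(C)+2$ and $d_M>\Cl(C)+1$. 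You avoid this external input entirely: in the unstable case you use the elementary chain $d_M>\mu(E/L)\ge\mu(E)\ge d_L\ge\Cl(C)$ (and $d_N\ge\mu(E)\ge\Cl(C)$ for the quotient line bundle), and in the semistable case you dispose of the residual range $d_F\le 2\Cl(C)+1$ by a short case analysis on $h^0(F)$ (either $F$ itself contributes to $\Cl_2(C)$, or $h^0(L)\ge3$ forces $d_L\ge\Cl(C)+4$, which is incompatible with $2d_L\le d_F$). This buys a proof that is self-contained modulo Lemma \ref{observ} and $\Cl_2(C)=\Cl(C)$, at the cost of a slightly longer case analysis; the paper's shortcut is natural in context since the input from \cite{ln2} is used throughout anyway. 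Two small points you assert without justification but which do hold and should be spelled out: $\mu(E)\ge\Cl(C)$ follows from semistability of $E$ applied to the subbundle $L$ itself, and $d_L<d_M$ follows from $d_M>\mu(E/L)\ge\mu(E)\ge d_L$.
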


\begin{proof}
If $L$ exists, then $\Cl(C) \leq d_L \leq g-1$. By the previous lemma we get $\gamma(L) \geq \Cl(C)$.
It is therefore enough to show that 
\begin{equation} \label{eq2.1}
\gamma(E/L) \geq \Cl(C),
\end{equation}
since then $\gamma(E) \geq \Cl(C)$ contradicting the hypothesis. 

First we claim that
\begin{equation} \label{eqn2.2}
d_{E/L} \leq 4g-6 - 2\Cl(C). 
\end{equation}
In fact,
$$
d_{E/L} = d_E - d_L \leq 3g-3 - \Cl(C) \leq 4g-6 -2\Cl(C),
$$
where the last inequality is valid for $\Cl(C) \leq g-3$ which is always true.

It remains to prove \eqref{eq2.1}.
We have 
$$
d_9 \geq \min \{\Cl(C) + 18, g+8 \}
$$
and hence
$$
\frac{d_9}{3} -2 \geq \frac{2 \Cl(C) + 2}{3}.
$$
It follows that $\gamma(E) \geq \frac{2 \mbox{\tiny Cliff}(C) + 2}{3}$
which implies 
$$
d_E = 3 \gamma(E) + 2(h^0(E) -3) \geq 2 \Cl(C) + 8.
$$
Since $E$ is semistable, we get
\begin{equation} \label{eq2.3}
d_{E/L} \geq \frac{4 \Cl(C) + 16}{3}. 
\end{equation}
This implies that
\begin{equation} \label{eq2.4}
d_{E/L} \geq 2\Cl(C) + 2.
\end{equation}
In fact, if $\Cl(C) \leq 5$, then $4\Cl(C) + 16 \geq 6 \Cl(C) + 6$. If $\Cl(C) = 6$, the inequality \eqref{eq2.3} is $d_{E/L} \geq \frac{40}{3}$, so 
$d_{E/L} \geq 14 = 2 \Cl(C) + 2$.

Now suppose that $E/L$ is semistable. Then \eqref{eqn2.2} and \eqref{eq2.4} imply that the hypotheses of Lemma \ref{observ} hold for $E/L$. So Lemma \ref{observ} gives
$$
\gamma(E/L) \geq \Cl_2(C) = \Cl(C).
$$

If $E/L$ is not semistable, let $M$ be a line subbundle with $d_M > \frac{d_{E/L}}{2} \geq \Cl(C) + 1$. Pulling back to $E$ gives $d_L + d_M \leq \frac{2}{3} d_E \leq 2g-2$.
So
$$
d_M \leq 2g-2-d_L \le 2g -2 - \Cl(C).
$$
So by Lemma \ref{observ}
$$
\gamma(M) \geq \Cl(C).
$$
On the other hand, $d_{(E/L)/M} \geq g-1 \geq \Cl(C)$ by the semistability of $E$ and 
$$
d_{(E/L)/M} < \frac{1}{2}d_{E/L} \leq 2g-3 - \Cl(C)
$$
by \eqref{eqn2.2}. So Lemma \ref{observ} gives $\gamma((E/L)/M) \geq \Cl(C)$. Hence $\gamma(E) \geq \Cl(C)$ contradicting the hypothesis.
\end{proof}

As an immediate consequence of Lemmas \ref{lem2.1} and \ref{lem2.2} we obtain

\begin{cor} \label{cor2.4}
Let the hypotheses be as in Lemma \ref{lem2.2}.  If $L$ is a line bundle of degree $d_1$ with $h^0(L)=2$, then $h^0(E^*\otimes L^*\otimes K_C)>0$.
\end{cor}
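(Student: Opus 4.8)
The plan is to invoke Lemma \ref{lem2.1} and then eliminate the first of its two alternatives using Lemma \ref{lem2.2}. Since $E$ contributes to $\Cl_3(C)$ it is in particular a bundle of rank $3$, and by hypothesis $\gamma(E) < \Cl(C)$; moreover $L$ has degree $d_1$ and $h^0(L) = 2$. Thus the hypotheses of Lemma \ref{lem2.1} are satisfied with $n = 3$, and we conclude that either $h^0(L^* \otimes E) > 0$ or $h^0(E^* \otimes L^* \otimes K_C) > 0$.

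It therefore suffices to rule out the first possibility. Suppose to the contrary that $h^0(L^* \otimes E) > 0$, i.e. that there is a nonzero homomorphism $\varphi \colon L \ra E$ (a nonzero element of $\Hom(L,E)$). Because $L$ is a line bundle on the integral curve $C$, any such nonzero $\varphi$ is injective as a sheaf map, so $\varphi(L)$ is a rank-$1$ subsheaf of $E$ with $d_{\varphi(L)} = d_1$. Let $M$ be its saturation in $E$; then $M$ is a line subbundle of $E$ with $d_M \geq d_{\varphi(L)} = d_1$.

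Now $d_1 = \Cl(C) + 2$ or $\Cl(C) + 3$, so in either case $d_M \geq d_1 > \Cl(C)$; in particular $d_M \geq \Cl(C)$. But Lemma \ref{lem2.2} asserts precisely that, under the present hypotheses, $E$ has no line subbundle of degree $\geq \Cl(C)$. This contradiction forces $h^0(L^* \otimes E) = 0$, and hence, by Lemma \ref{lem2.1}, $h^0(E^* \otimes L^* \otimes K_C) > 0$, as required.

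There is essentially no serious obstacle here, the corollary being an immediate combination of the two preceding lemmas; the only point needing a word of care is the passage from a nonzero map $L \ra E$ to a line subbundle of degree at least $d_1$, which uses the injectivity of $\varphi$ together with the fact that passing to the saturation can only increase the degree.
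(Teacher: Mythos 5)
Your proof is correct and is precisely the argument the paper intends: the corollary is stated there as an immediate consequence of Lemmas \ref{lem2.1} and \ref{lem2.2}, with the first alternative of Lemma \ref{lem2.1} excluded because a nonzero map $L\ra E$ would yield a line subbundle of degree at least $d_1>\Cl(C)$, contradicting Lemma \ref{lem2.2}. Your added care about injectivity and saturation is a sound (if routine) filling-in of the details the paper leaves implicit.
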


An analogue of Lemma \ref{lem2.2} for rank-2 subbundles of $E$ is

\begin{lem} \label{lem2.4}
Suppose $g \geq 7$ and $2 \leq \Cl_2(C) = \Cl(C) \leq 6$. If $E$ is a bundle contributing to $\Cl_3(C)$ with $\gamma(E) < \Cl(C)$ and $d_E \leq 2g$, then $E$ has no rank-$2$ subbundle $F$ with $d_F \geq 2\Cl(C) +2$.
\end{lem}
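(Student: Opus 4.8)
The plan is to mimic the proof of Lemma \ref{lem2.2}, but now the relevant exact sequence is $0\to F\to E\to E/F\to 0$, where $F$ has rank $2$ and the quotient $E/F$ is a line bundle. Suppose for contradiction that such an $F$ exists. The basic identity to exploit is that, since $h^0(E)\le h^0(F)+h^0(E/F)$, a direct computation from the definition of $\gamma$ gives
$$
3\gamma(E)\ge 2\gamma(F)+\gamma(E/F).
$$
Thus it suffices to prove $\gamma(F)\ge\Cl(C)$ and $\gamma(E/F)\ge\Cl(C)$, for then $\gamma(E)\ge\Cl(C)$, contradicting the hypothesis $\gamma(E)<\Cl(C)$.

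For the bound on $\gamma(F)$ I would first dispose of the non-semistable case. If $F$ is not semistable, its maximal destabilizing line subbundle $M$ satisfies $d_M>\frac12 d_F\ge\Cl(C)+1$, and $M$ is then (after saturating) a line subbundle of $E$ of degree $>\Cl(C)$, which is impossible by Lemma \ref{lem2.2}. Hence $F$ is semistable, and I would apply Lemma \ref{observ} to it with $n=2$. The lower degree bound $d_F\ge 2\Cl(C)+2=2(n-1)+n\Cl_2(C)$ is exactly the hypothesis, while the upper bound $d_F\le\frac23 d_E\le\frac{4g}{3}\le 4g-6-2\Cl(C)$ follows from semistability of $E$, the hypothesis $d_E\le 2g$, and $\Cl(C)\le\frac{g-1}{2}$. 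Lemma \ref{observ} then yields $\gamma(F)\ge\Cl_2(C)=\Cl(C)$.

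For $\gamma(E/F)$, note that $E/F$ is a line-bundle quotient of the semistable bundle $E$, so $d_{E/F}\ge\frac13 d_E$; to run Lemma \ref{observ} with $n=1$ I need $\Cl(C)\le d_{E/F}\le 2g-2-\Cl(C)$. For the lower bound, exactly as in the proof of Lemma \ref{lem2.2} the gonality estimate $d_9\ge\min\{\Cl(C)+18,\,g+8\}$ forces $\gamma(E)\ge\frac{2\Cl(C)+2}{3}$ and hence $d_E\ge 2\Cl(C)+8$; combined with $\Cl(C)\le 6$ this gives $d_{E/F}\ge\frac13 d_E\ge\Cl(C)$. The upper bound is where the hypothesis $d_E\le 2g$ does the essential work: $d_{E/F}=d_E-d_F\le 2g-(2\Cl(C)+2)\le 2g-2-\Cl(C)$, since $\Cl(C)\ge 0$. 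Lemma \ref{observ} then gives $\gamma(E/F)\ge\Cl(C)$, completing the argument. The one estimate to watch is precisely this last upper bound: without $d_E\le 2g$ the degree of the quotient could fall outside the range of Lemma \ref{observ}, so that hypothesis is the crux of the proof rather than the additivity inequality or the semistability reduction, both of which are routine.
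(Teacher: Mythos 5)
Your proof is correct and follows essentially the same route as the paper's: reduce to the case where $F$ is semistable via Lemma \ref{lem2.2}, then apply Lemma \ref{observ} separately to $F$ and to $E/F$, using $d_E\le 2g$ for the upper degree bounds. The only cosmetic difference is the lower bound on $d_{E/F}$: the paper obtains $d_{E/F}\ge d_F/2\ge\Cl(C)+1$ directly from semistability of $E$ and the hypothesis $d_F\ge 2\Cl(C)+2$, whereas you route through the gonality estimate $d_E\ge 2\Cl(C)+8$; both are valid.
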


\begin{proof}
Suppose $F$ exists. If $F$ is not semistable, then $E$ possesses a line subbundle $M$ of degree 
$> 1 + \Cl(C)$. This contradicts Lemma \ref{lem2.2}. So $F$ must be semistable. Moreover
$$
2 + 2\Cl(C) \leq d_F \leq \frac{4g}{3} \leq 4g - 6 -2\Cl(C).
$$
So $\gamma(F) \geq \Cl(C)$ by Lemma \ref{observ}. On the other hand we have
$$
d_{E/F} = d_E - d_F \leq 2g-2 - 2 \Cl(C) < 2g - 2 - \Cl(C)
$$
and $d_{E/F} \geq d_F/2\ge  \Cl(C) +1$.  So by Lemma \ref{observ}, $\gamma(E/F) \geq \Cl(C)$. This contradicts the hypothesis that $\gamma(E)<\Cl(C)$.
\end{proof}

Finally in this section, let $E$ be a generated vector bundle. Define $D(E)$ by the exact sequence 
\begin{equation}\label{eq2.6}
0 \ra D(E)^* \ra H^0(E) \otimes \cO_C \ra E \ra 0.
\end{equation} 
$D(E)$ is called the bundle obtained by the {\it dual span construction} from $E$. Note that $h^0(D(E)^*)=0$ and $D(E)$ is generated. Moreover, if $h^0(E^*)=0$, then $h^0(D(E))\ge h^0(E)$. If, in addition, $h^0(D(E))=h^0(E)$, then $D(D(E))\simeq E$.

For any quotient $G$ of $D(E)$, we have a diagram   
\begin{equation*}
\xymatrix{&& 0 \ar[d] & 0 \ar[d] & 0 \ar[d] &\\
&0 \ar[r] & M \ar[r] \ar[d] & W' \otimes \cO_C \ar^{\alpha}[r] \ar[d] & N \ar[d] \ar@{->>}[r] & \coker (\alpha)  \\
(3.6) &0 \ar[r] & E^* \ar[r] \ar^{\beta}[d] & H^0(E)^* \otimes \cO_C \ar[r] \ar[d] & D(E) \ar[r] \ar[d] & 0 \\
&0 \ar[r] & D^*_{W,G} \ar[r] \ar@{->>}[d] & W \otimes \cO_C \ar[r] \ar[d] & G \ar[r] \ar[d] & 0 \\
&& \coker (\beta)& 0 & 0 &
}
\end{equation*}
with exact rows and columns.
Here the middle row is the dual of \eqref{eq2.6}, $W$ is the subspace of $H^0(G)$ which is the image of $H^0(E)^*$ considered as a subspace of $H^0(D(E))$, $D^*_{W,G}$ 
is defined by the bottom sequence and the vertical sequences are exact. Moreover, $\coker (\alpha) \simeq \coker(\beta)$ by the snake lemma.

\section{Bundles of degree 14 or of degree $>2g$}
Suppose now $\Cl(C) = 3$. Then $g\ge7$ and $d_9\ge15$; it follows that 
$$\Cl_3(C)\ge\frac83,$$
a fact which we assume from now on without further reference. 

Let $E$ be a semistable bundle of rank $3$ with $d_E =14,\; h^0(E) = 6$. Since $\gamma(E)=\frac83$, $E$ computes $\Cl_3(C)$. Since $\Cl_2(C)=3\ge\Cl_3(C)$, it follows from \cite[Theorem 2.4]{ln1} that $E$ is generated. Define $D(E)$ by the exact sequence 
\eqref{eq2.6}.
\begin{lem} \label{lem2.5}
D(E) is stable of rank $3$ and degree $14$. 
\end{lem}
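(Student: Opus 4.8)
The plan is to start from the defining sequence \eqref{eq2.6} for $E$ generated with $d_E=14$, $h^0(E)=6$, and read off the basic numerical data of $D(E)$. Dualizing \eqref{eq2.6} shows $\rk D(E)=h^0(E)-\rk E=6-3=3$ and $\deg D(E)=d_E=14$, since the trivial bundle $H^0(E)\otimes\cO_C$ has degree $0$ and $D(E)^*$ has degree $-d_E$. Thus $D(E)$ has exactly the claimed rank and degree, and it remains only to establish stability. Note that $\mu(D(E))=\frac{14}{3}=\mu(E)$.

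\textbf{From semistability to stability of $E$.} Before tackling $D(E)$ I would first upgrade the semistability of $E$ to stability. If $E$ had a proper subbundle $F$ with $\mu(F)=\mu(E)=\frac{14}{3}$, then either $\rk F=1$ with $d_F\ge5>3=\Cl(C)$, contradicting Lemma \ref{lem2.2} (applicable since $\gamma(E)=\frac83<3=\Cl(C)$), or $\rk F=2$ with $d_F=\frac{28}{3}$, which is not an integer, or could be replaced by a destabilizing rank-$2$ subbundle of degree $\ge10=2\Cl(C)+4>2\Cl(C)+2$, contradicting Lemma \ref{lem2.4} (whose hypothesis $d_E=14\le 2g$ holds since $g\ge7$). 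Hence $E$ is in fact stable.

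\textbf{Stability of $D(E)$ via the quotient diagram.} For $D(E)$ I would suppose it has a proper quotient $G$ with $\mu(G)\le\mu(D(E))=\frac{14}{3}$ and derive a contradiction, working with the displayed diagram (3.6) attached to $G$. Here $W\subseteq H^0(G)$ is the image of $H^0(E)^*\subseteq H^0(D(E))$, and $D^*_{W,G}$ is defined by the bottom row. The key is the identification $\coker(\alpha)\simeq\coker(\beta)$ from the snake lemma, together with the leftmost column, which exhibits the subbundle $M$ of the trivial bundle $W'\otimes\cO_C$ mapping into $E^*$. Destabilizing data for $D(E)$ translates, through this diagram, into a subbundle of $E^*$ of too-large degree, equivalently a quotient of $E$ of too-small slope; since $E$ is already known to be stable, such a quotient cannot exist once one checks the relevant numerical bound. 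Concretely, one estimates $\rk G$ and $\deg G$ in terms of the dimension of $W$ and the degree of $D^*_{W,G}$, uses $h^0(D(E)^*)=0$ to control how much of $H^0(D(E))$ can come from $W$, and plays this against the stability of $E$ transported via $\coker(\alpha)\simeq\coker(\beta)$.

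\textbf{Main obstacle.} The delicate point is the bookkeeping in diagram (3.6): one must pin down precisely which numerical inequality on $(\rk G,\deg G)$ a destabilizing quotient would force, and confirm that the dual statement genuinely contradicts the \emph{stability} (not merely semistability) of $E$ established above — the slope $\frac{14}{3}$ being non-integral is what makes the strict inequalities line up. A secondary subtlety is ensuring $h^0(D(E))=h^0(E)=6$ (so that $D(D(E))\simeq E$ and $D(E)$ is itself generated with the expected cohomology), which may require checking $h^0(E^*)=0$; this follows from stability of $E$ together with $\mu(E^*)=-\frac{14}{3}<0$. Once these numerical reconciliations are in place, stability of $D(E)$ drops out.
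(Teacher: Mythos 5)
Your computation of the rank and degree of $D(E)$ is correct, and your general strategy for stability --- rule out quotients $G$ of $D(E)$ with $\mu(G)\le\frac{14}{3}$ by pushing them through diagram (3.6) to produce a forbidden subbundle of $E$ --- is exactly the paper's. But the proposal stops precisely where the work begins: everything after ``Concretely, one estimates\dots'' is a promissory note, and the bounds it promises are the entire content of the proof. What is actually needed is a case split. A destabilizing quotient is either a line bundle of degree $\le 4$ or (after passing to a stable quotient) a stable rank-$2$ bundle $G$ of degree $\le 9$, both generated because $D(E)$ is. The line-bundle case is killed with no reference to $E$ at all: a generated non-trivial line bundle has $h^0\ge2$, hence degree $\ge d_1\ge 5>4$ (non-triviality from $h^0(D(E)^*)=0$). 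In the rank-$2$ case the indispensable input is $\Cl_2(C)=\Cl(C)=3$: since $d_G\le 9$, $h^0(G)\ge4$ would give $\gamma(G)\le\frac52<3$, so $h^0(G)=3$ (again $G$ generated and non-trivial), whence $\dim W=3$, $D_{W,G}=D(G)$ is a line bundle, and $d_G\ge d_2\ge7$. Only now does the diagram yield a line subbundle of $E$ of degree $\ge7$, contradicting Lemma \ref{lem2.2}. None of the inputs $d_1\ge5$, $d_2\ge7$, $\Cl_2(C)=3$ appears in your sketch, and ``play this against the stability of $E$'' cannot replace them: stability of $E$ only excludes line subbundles of degree $\ge5$, and without the Clifford-index bound you have no lower bound on $d_G$ to compare against. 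You also never verify that $\dim W=3$, which is what makes $D_{W,G}$ a line bundle in the first place.

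Two smaller points. The paragraph upgrading $E$ from semistable to stable is both unnecessary (the paper never uses stability of $E$, only Lemma \ref{lem2.2}; and in any case $\gcd(3,14)=1$ makes semistable equal stable automatically) and internally garbled: a rank-$1$ subbundle with $\mu(F)=\frac{14}{3}$ has non-integral degree, so the alternative ``$d_F\ge5$'' does not arise from equality of slopes. And the closing worry about $h^0(D(E))=6$ and $D(D(E))\simeq E$ is irrelevant to this lemma.
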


\begin{proof}
The assertions on the rank and degree are obvious. Suppose $D(E)$ is not stable. Then either it has a quotient line bundle of degree
$\leq 4$ which is generated (a contradiction since $h^0(D(E)^*)=0$) or it has a stable quotient rank-2 bundle $G$ of degree $\leq 9$ which is generated.
Since $\Cl_2(C) = \Cl(C) = 3$, this implies that $h^0(G) \leq 3$. Moreover, $G$ cannot be trivial, since $h^0(D(E)^*) = 0$ and so $h^0(G^*) = 0$. Hence $h^0(G) = 3$ and $d_G \geq d_2\ge 7$. 
In the diagram (3.6), we see that $\alpha$ is non-zero since $h^0(E^*)=0$. So $\coker(\alpha)$ and $\coker(\beta)$ are torsion sheaves and $D_{W,G}=D(G)$ injects into $E$. Since $D(G)$ is a line bundle and $d_{D(G)} = d_G \geq 7$, this contradicts Lemma \ref{lem2.2}.
\end{proof}

\begin{prop} \label{prop3.1}
Suppose $g \geq 7, \; n=3$ and $\Cl(C) = 3$. Then there exists no semistable rank-$3$ bundle of degree $14$ with $h^0 = 6$.
\end{prop}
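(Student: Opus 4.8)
The plan is to derive a contradiction by combining the Paranjape--Ramanan lemma with the ban on low-slope subbundles provided by Lemmas \ref{lem2.2} and \ref{lem2.4}. Suppose such an $E$ exists. Then $\gamma(E)=\frac13\bigl(14-2(6-3)\bigr)=\frac83<3=\Cl(C)$, and $E$ contributes to $\Cl_3(C)$, since it is semistable of rank $3$ with $h^0(E)=6\ge2\cdot3$ and $\mu(E)=\frac{14}3\le g-1$ for $g\ge7$; thus the hypotheses of Lemmas \ref{lem2.2} and \ref{lem2.4} hold. Writing $h^0(E)=3+3$, I apply Lemma \ref{lemPR} with $n=3$ and $s=3$, so that $ns=9$. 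Since $d_9\ge15>14=d_E$, the conclusion $d_E\ge d_9$ cannot hold, and therefore $E$ must possess a proper subbundle $P$ with $h^0(P)>\rk P$.

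I would then eliminate the possibilities for $P$ one rank at a time. If $\rk P=1$, then $P$ is a line subbundle with $h^0(P)\ge2$, whence $d_P\ge d_1\ge5>\Cl(C)$, contradicting Lemma \ref{lem2.2}. If $\rk P=2$, then $P$ must be semistable, since a destabilizing line subbundle of $P$ is a line subbundle of $E$ and Lemma \ref{lem2.2} bounds every such by degree $\Cl(C)-1=2$, which is incompatible with $h^0(P)\ge3$. Semistability and $h^0(P)\ge3$ give $d_P\ge d_2\ge7$ by Lemma \ref{lema}(i); Lemma \ref{lem2.4} rules out $d_P\ge8=2\Cl(C)+2$ (note $d_E=14\le2g$), so $d_P=7=d_2$; and $h^0(P)\ge4$ is impossible, as it would make the semistable bundle $P$ contribute to $\Cl_2(C)$ with $\gamma(P)=\frac12\bigl(11-2h^0(P)\bigr)\le\frac32<3$. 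The only surviving case is therefore $\rk P=2$, $d_P=7=d_2$ and $h^0(P)=3$.

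This threshold case is the step I expect to be the main obstacle. Here the quotient $Q:=E/P$ is a line bundle of degree $7$, and $h^0(E)=6$ forces $h^0(Q)=3$ with the sequence $0\to P\to E\to Q\to0$ exact on global sections, so both the sub $P$ and the quotient $Q$ are extremal objects of degree $d_2$. To rule this configuration out I would invoke the dual span construction: by Lemma \ref{lem2.5}, $D(E)$ is again stable of rank $3$ and degree $14$, and since $h^0(D(E))\ge h^0(E)=6$ while $D(E)$ also contributes to $\Cl_3(C)$ one gets $h^0(D(E))=6$ and $E\cong D(D(E))$, so $D(E)$ satisfies the very same hypotheses. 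Applying Corollary \ref{cor2.4} to $D(E)$ yields a nonzero homomorphism $D(E)\to L^*\otimes K_C$ whose image is a generated line-bundle quotient $G$ of $D(E)$; feeding $G$ through the diagram (3.6) transfers it to a subbundle $D_{W,G}\hookrightarrow E$ of degree $d_G$, which I would match against $P$ and $Q$. The hard point is to show that this transferred subbundle, the extension $0\to P\to E\to Q\to0$, and the equality $d_2=7$ cannot coexist with $h^0(E)=6$; exploiting the self-duality $E\cong D(D(E))$ together with the extremality forced by $d_2=7$ should produce the required contradiction and complete the proof.
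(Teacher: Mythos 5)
Your reduction is correct as far as it goes: Lemma \ref{lemPR} with $n=s=3$ together with $d_9\ge15$ does force a proper subbundle $P$ with $h^0(P)>\rk P$, and the case analysis via Lemmas \ref{lem2.2}, \ref{lem2.4} and \ref{lema} correctly whittles everything down to the single configuration $0\to P\to E\to Q\to 0$ with $P$ semistable of rank $2$, $d_P=d_2=7$, $h^0(P)=3$, and $Q$ a line bundle of degree $7$ with $h^0(Q)=3$, all sections lifting. But this is exactly where the real content of the proposition lies, and you explicitly leave it open. That is a genuine gap, not a routine verification: extensions of this shape do exist (take $P=D(H)$, $Q=H$ for a plane model of degree $7$), and the only question is whether all three sections of $Q$ can lift. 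By \cite[Proposition 2.6]{ln2}, a non-trivial extension with $h^0(E)=6$ would automatically be semistable, and Comment 9.1 of the paper shows that its existence is equivalent to the cohomological bound $h^0(E_H\otimes E_H)\ge10$ --- a statement the paper can only refute as a \emph{consequence} of Proposition \ref{prop3.1}, not as an input to it. The tool you propose for the finish, Corollary \ref{cor2.4}, also does not deliver: it gives a nonzero map $E\to L^*\otimes K_C$ with $\deg(L^*\otimes K_C)=2g-7$ and $h^0(L^*\otimes K_C)=g-4$, so its rank-$2$ kernel $F$ satisfies only $d_F\ge21-2g$ and $h^0(F)\ge10-g$; for $g=7$ this says $d_F\ge7$ and $h^0(F)\ge3$, i.e.\ precisely the configuration you are trying to exclude. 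This is why the paper uses Corollary \ref{cor2.4} only for the degrees $16$, $18$, $20$ in Proposition \ref{prop3.4} and not for degree $14$.

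The paper's actual argument is entirely different and bypasses the subbundle analysis. It takes the second exterior power of the dual span sequence \eqref{eq2.6}, giving $0\to P\to\wedge^2H^0(E)\otimes\cO_C\to\wedge^2E\to0$ with $P$ an extension of $D(E)^*\otimes E$ by $\wedge^2D(E)^*$; stability of $E$ and of $D(E)$ at the common slope $\frac{14}{3}$ (Lemma \ref{lem2.5}) forces $h^0(P)\le1$, hence $h^0(\wedge^2E)\ge14$, and then $K_C\otimes\wedge^2E^*$ is a semistable rank-$3$ bundle of degree $6g-34$ with $h^0\ge3g-17$, contradicting $\Cl_3(C)\ge\frac83$ for $g\ge8$ and Lemma \ref{lema}(iii) for $g=7$. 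To salvage your approach you would need an independent proof that the sections of $Q$ cannot all lift in the extremal case, i.e.\ essentially a direct proof that $h^0(E_H\otimes E_H)\le9$; nothing in your sketch supplies this.
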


\begin{proof}
Suppose $E$ is such a bundle.
Taking an exterior product of \eqref{eq2.6}, we obtain
\begin{equation} \label{eq3.3}
0 \ra P \ra \wedge^2 H^0(E) \otimes \cO_C \ra \wedge^2 E \ra 0
\end{equation}
with $P$ a vector bundle of rank 12 which fits into an exact sequence 
\begin{equation} \label{eqn3.3}
0 \ra \wedge^2 D(E)^* \ra P \ra D(E)^* \otimes E \ra 0. 
\end{equation} 
From \eqref{eqn3.3} and Lemma \ref{lem2.5} 
we see that $h^0(P)\le1$. With \eqref{eq3.3} this implies that $h^0(\wedge^2 E) \geq 14$. 
But $\wedge^2 E \simeq \det(E) \otimes E^*$ and hence is semistable of degree 28.
So $K_C \otimes \wedge^2 E^*$ is semistable of degree $6g-34$ with $h^0(K_C \otimes \wedge^2 E^*) \geq 3g-17$.

Hence, if $g \geq 8$, then $K_C \otimes \wedge^2 E^*$ is a stable bundle of rank 3 with $h^i \geq 6$ for $i = 1,2$ and $\gamma( K_C \otimes \wedge^2 E^*) = 2$. This contradicts
the fact that $\Cl_3(C)\ge \frac83$.

If $g = 7$ , then $K_C \otimes \wedge^2E^*$ is stable of degree 8 with $h^0(K_C \otimes \wedge^2E^*) \geq 4$. 
This contradicts Lemma \ref{lema}(iii), since $d_2 =7$ and $d_3 =9$. 
\end{proof}

An interesting consequence of Proposition \ref{prop3.1} is

\begin{prop} \label{prop3.3}
Suppose $C$ is a curve of genus $15$ with $\Cl(C) =3$. Then $\Cl_3(C) = 3$. 
\end{prop}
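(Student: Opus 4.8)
The plan is to reduce everything to a single value of $\gamma$ and then to quote Proposition \ref{prop3.1}. We already know $\Cl_3(C)\ge\frac83$; since $\gamma(E)=\frac13(d_E-2(h^0(E)-3))\in\frac13\ZZ$ for every rank-$3$ bundle, the only value in $[\frac83,3)$ is $\frac83$. Moreover $\Cl_3(C)\le\Cl(C)=3$: if $L$ computes $\Cl(C)$ then $L^{\oplus3}$ is semistable of rank $3$ with $h^0=3h^0(L)\ge6$, $\mu(L^{\oplus3})=d_L\le g-1$ and $\gamma(L^{\oplus3})=d_L-2h^0(L)+2=\Cl(C)=3$. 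Hence either $\Cl_3(C)=3$, or $\Cl_3(C)=\frac83$ and it suffices to rule out the latter. So I would suppose that a semistable rank-$3$ bundle $E$ with $\gamma(E)=\frac83$ contributes to $\Cl_3(C)$; then $d_E=2h^0(E)+2$ with $h^0(E)\ge6$ and $\mu(E)\le g-1=14$, so $14\le d_E\le42$, and I would seek a contradiction.

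The first step is to show that such an $E$ must carry a rank-$2$ subbundle $N$ with $h^0(N)\ge3$. Indeed, $E$ cannot be free of subbundles $N$ with $h^0(N)>\rk N$: otherwise Lemma \ref{lemPR} gives $d_E\ge d_{3(h^0(E)-3)}$, and for $g=15$ the bound $d_r\ge\min\{\Cl(C)+2r,g+r-1\}$ yields $d_{3(h^0(E)-3)}\ge\min\{6h^0(E)-15,3h^0(E)+5\}>2h^0(E)+2=d_E$ (for $h^0(E)=6$ this is just $d_9\ge15>14$), a contradiction. A line subbundle with $h^0\ge2$ has degree $\ge d_1=5>\Cl(C)$, which Lemma \ref{lem2.2} forbids; so $N$ has rank $2$ and $h^0(N)\ge3$. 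Since every line subbundle of $E$, hence of $N$, has degree $\le2$ by Lemma \ref{lem2.2}, a short argument shows $N$ is semistable, and then Lemma \ref{lema}(i) together with the Nagata bound give $d_2\le d_N\le19$.

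It remains to eliminate this $N$, and here the degree splits. If $d_E\le2g=30$ then Lemma \ref{lem2.4} forbids rank-$2$ subbundles of degree $\ge2\Cl(C)+2=8$, forcing $d_N\le7$; with $d_N\ge d_2$ this gives $d_2=7$, $d_N=7$ and (by Lemma \ref{lema}(ii)) $h^0(N)=3$. When moreover $d_E=14$, so $h^0(E)=6$, this is exactly the configuration excluded by Proposition \ref{prop3.1} through its exterior-power computation with $\wedge^2E\simeq\det E\otimes E^*$. Every remaining case reduces to a bundle of degree $>2g$: if $16\le d_E\le30$ then $K_C\otimes E^*$ is semistable of rank $3$ with $\gamma=\frac83$ and degree $84-d_E>2g$, while if $30<d_E\le42$ then $E$ itself already has degree $>2g$. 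Thus the proof comes down to the non-existence of a semistable rank-$3$ bundle of degree $>2g$ with $\gamma=\frac83$, the companion theme of this section. I expect the main obstacle to be precisely the window $2g<d_E\le3(g-1)$: there Lemma \ref{lem2.4} is unavailable and Serre duality keeps the degree in $(2g,\infty)$, so the naive splitting $3\gamma(E)\ge2\gamma(N)+\gamma(E/N)$ can fail because the quotient line bundle $E/N$ may have degree $>2g-2$ and hence Clifford index below $\Cl(C)$. Overcoming this should require the self-contained high-degree analysis, for which the largeness of $d_9$ when $g=15$ and an exterior-power argument in the spirit of Proposition \ref{prop3.1} are the natural tools.
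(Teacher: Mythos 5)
Your reduction to $\Cl_3(C)\in\{\tfrac83,3\}$, the extraction of a rank-$2$ subbundle $N$, and the treatment of $d_E=14$ via Proposition \ref{prop3.1} are all sound, and the range $d_E>2g$ could indeed be dispatched by citing Proposition \ref{prop3.4}. But the proof has a genuine gap in the middle range $16\le d_E\le 30$ (and more generally $16\le d_E\le 42$), and your Serre-duality device does not close it. If $E$ contributes to $\Cl_3(C)$ then $\mu(E)\le g-1=14$, so $\mu(K_C\otimes E^*)=28-\mu(E)\ge 14$ with equality only when $d_E=42$; hence for $16\le d_E\le 30$ the bundle $K_C\otimes E^*$ does \emph{not} contribute to $\Cl_3(C)$, and Proposition \ref{prop3.4} --- whose hypothesis is precisely that the bundle contributes --- cannot be applied to it. The statement you reduce to, ``there is no semistable rank-$3$ bundle of degree $>2g$ with $\gamma=\tfrac83$'' without the slope restriction, is strictly stronger than anything established in the paper and is in fact just the Serre dual of the very cases you still owe (e.g.\ $d_E=18,20,\dots,30$ with $h^0(E)=8,9,\dots,14$). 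Your closing sentences (``I expect the main obstacle to be\dots'', ``should require the self-contained high-degree analysis'') concede that this part is not proved, so the argument is incomplete. Note also that your subbundle analysis for $d_E\le 30$ yields $d_N=7$, $h^0(N)=3$ and then the additivity estimate $3\gamma(E)\ge 2\gamma(N)+\gamma(E/N)=5+3=8$ only reproduces $\gamma(E)\ge\tfrac83$, so it gives no contradiction there either.

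The paper takes a completely different and much shorter route that supplies exactly the missing ingredient: if $\Cl_3(C)=\tfrac83$ then \cite[Proposition 6.7]{ln2} forces $d_2=7$, which for $g=15$ means $C$ is a smooth plane septic; then \cite[Theorem 5.6]{ln2} shows that on such a curve any bundle contributing to $\Cl_3(C)$ with $\gamma<3$ must have degree $14$, and Proposition \ref{prop3.1} finishes. In other words, the classification results of \cite{ln2} (or some genus-$15$-specific substitute, such as an argument using the plane model as in Section 6) are what eliminate the degrees $16$ through $42$; without them, each even degree in that range would need its own analysis in the style of Proposition \ref{prop4.2}, which you have not carried out.
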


\begin{proof}
If $\Cl_3(C)=\frac83$, then, by \cite[Proposition 6.7]{ln2}, we have $d_2 = 7$. This implies that $C$ is a smooth plane septic. Then, by \cite[Theorem 5.6]{ln2}, any bundle $E$ contributing to $\Cl_3(C)$
with $\gamma(E) < 3$ has degree 14. By Proposition \ref{prop3.1}, such a bundle does not exist. 
\end{proof}

\begin{prop}  \label{prop3.4}
Suppose $g \geq 7, \; n=3$ and $\Cl(C) = 3$. Then there exists no semistable bundle $E$ of degree $> 2g$ contributing to $\Cl_3(C)$ with $\gamma(E) < 3$.
\end{prop}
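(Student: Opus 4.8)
The plan is to combine Corollary \ref{cor2.4} with a Clifford-index splitting argument, working around the fact that Lemma \ref{lem2.4} cannot be cited directly here since it requires $d_E \le 2g$. First I would record the numerical constraints. Since $E$ contributes to $\Cl_3(C)$ we have $\mu(E) \le g-1$, so $2g < d_E \le 3g-3$, and since $\gamma(E) < \Cl(C) = 3$ the hypotheses of Lemma \ref{lem2.2}, hence of Corollary \ref{cor2.4}, are satisfied. Applying Corollary \ref{cor2.4} to a line bundle $L$ of degree $d_1$ with $h^0(L)=2$ produces a non-zero homomorphism $\phi\colon E \to T$, where $T := L^*\otimes K_C$ is a line bundle of degree $2g-2-d_1$ and $d_1 = 5$ or $6$.

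Next I would analyse $\phi$. Its image $Q := \IM(\phi)$ is a subsheaf of the line bundle $T$, hence itself a line bundle with $d_Q \le 2g-2-d_1 \le 2g-7$; since $E/\ker\phi \cong Q$ is torsion-free, $G := \ker\phi$ is a saturated rank-$2$ subbundle and we obtain an exact sequence $0 \to G \to E \to Q \to 0$ with $d_G = d_E - d_Q$. Semistability of $E$ gives $d_Q \ge \mu(E) = d_E/3 > 2g/3$, and combining the two bounds on $d_Q$ yields $8 \le d_G \le 2g-2$ (the lower bound $d_G \ge 8$ uses $d_E > 2g$ and $d_1 \ge 5$, while $d_G \le \tfrac23 d_E \le 2g-2$ uses $d_E \le 3g-3$). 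By Lemma \ref{lem2.2} every line subbundle of $E$, and hence of $G$, has degree $\le 2 < d_G/2$, so $G$ is stable, in particular semistable.

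The heart of the argument is then to bound $\gamma(G)$ and $\gamma(Q)$ from below and to split $\gamma(E)$. Since $8 \le d_G \le 2g-2 \le 4g-12$, Lemma \ref{observ} applied to the semistable rank-$2$ bundle $G$ gives $\gamma(G) \ge \Cl_2(C) = 3$; since $3 \le d_Q \le 2g-5$ (using $d_Q > 2g/3 > 3$ for $g\ge 7$ and $d_Q \le 2g-7$), Lemma \ref{observ} applied to $Q$ gives $\gamma(Q) \ge \Cl(C) = 3$. Finally, from the exact sequence together with $h^0(E) \le h^0(G) + h^0(Q)$ and $d_E = d_G + d_Q$ one obtains the additivity estimate
$$
3\gamma(E) = d_E - 2\bigl(h^0(E)-3\bigr) \;\ge\; 2\gamma(G) + \gamma(Q) \;\ge\; 2\cdot 3 + 3 = 9,
$$
so $\gamma(E) \ge 3$, contradicting $\gamma(E) < 3$ and establishing the proposition.

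I expect the main obstacle to be verifying that $d_G$ lands in the window $[8, 4g-12]$ demanded by Lemma \ref{observ} \emph{without} the crutch $d_E \le 2g$ that Lemma \ref{lem2.4} enjoys. This is precisely where the extra structure of $Q$ as a sub-line-bundle of $L^*\otimes K_C$ does the work: the resulting upper bound $d_Q \le 2g-2-d_1$ forces $d_G$ to be large enough, while the contribution hypothesis $\mu(E) \le g-1$ keeps $d_G$ from exceeding $2g-2$. A secondary point that must be checked before invoking Lemma \ref{observ} is the semistability of $G$, which I would deduce from Lemma \ref{lem2.2} as above.
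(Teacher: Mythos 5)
Your proof is correct, and it takes a genuinely different (and more uniform) route than the paper. Both arguments hinge on Corollary \ref{cor2.4} and on analysing the kernel of the resulting map $\varphi\colon E\to L^*\otimes K_C$, but the paper first invokes \cite[Propositions 6.5, 6.6 and 6.7]{ln2} to reduce to three explicit numerical cases ($g=7$, $d_E=16$, $h^0=7$; $g=7$ or $8$, $d_E=18$, $h^0=8$; $g=8$, $d_E=20$, $h^0=9$) and then disposes of each by ad hoc section counts against $\Cl_2(C)=3$ and semistability. You instead keep the situation general: the upper bound $d_Q\le 2g-2-d_1$ coming from $Q=\IM(\varphi)\subseteq L^*\otimes K_C$, combined with $d_E\ge 2g+1$ and $d_E\le 3g-3$, pins $d_G$ and $d_Q$ into the windows required by Lemma \ref{observ} (your numerology checks out: $d_G\ge 3+d_1\ge 8$, $d_G\le\tfrac23 d_E\le 2g-2\le 4g-12$, and $5\le d_Q\le 2g-7\le 2g-5$), stability of $G$ follows from Lemma \ref{lem2.2} exactly as you say, and the subadditivity $3\gamma(E)\ge 2\gamma(G)+\gamma(Q)\ge 9$ finishes it. What your approach buys is independence from the external classification in \cite{ln2} and a single argument covering all degrees $>2g$ at once; what the paper's approach buys is that the same classification is needed anyway elsewhere (e.g.\ Theorem \ref{thm5.5}), so the case list comes for free and each case is dispatched in a couple of lines. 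Your method is essentially the same $\gamma$-splitting technique the paper itself uses in Lemmas \ref{lem2.2} and \ref{lem2.4}, transplanted to the regime $d_E>2g$ where the extra leverage comes from the bound $d_Q\le\deg(L^*\otimes K_C)$ rather than from $d_E\le 2g$.
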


\begin{proof}
Suppose $E$ is a bundle with these properties; in particular $\gamma(E) = \frac{8}{3} < \Cl(C)$. By \cite[Propositions 6.5, 6.6 and 6.7]{ln2}, 
the only possibilities are 
\begin{itemize}
\item $g=7, \; d_E = 16,  \; h^0(E) = 7$;
\item $g=7 \; \mbox{or} \; 8, \; d_E = 18,  \; h^0(E) = 8$;
\item $g=8, \; d_E = 20,  \; h^0(E) = 9$.
\end{itemize}
Since there is no curve of Clifford dimension $>1$ of genus 7 or 8, we must have $d_1 = 5$.

Suppose $L$ is a line bundle computing $d_1$. 
By Corollary \ref{cor2.4}, there is a non-zero homomorphism 
$$
\varphi: E \ra L^* \otimes K_C.
$$ 
Denote by $F$ its kernel.
Note that 
$$
\deg (L^* \otimes K_C) = 2g-7 \quad  \mbox{and} \quad  h^0(L^* \otimes K_C) = g-4.
$$

Suppose first that $g=7, \;d_E = 16$ and $h^0(E) = 7$. 
If $F$ is not semistable, then $F$ (and hence $E$) has a line subbundle of degree $\geq 5$ contradicting Lemma \ref{lem2.2}.
So $F$ is semistable. If $\varphi$ is surjective, then $d_F = 9$ and $h^0(F) \geq 4$ contradicting $\Cl_2(C) = 3$.
If $\varphi$ is not surjective, then
$h^0(F) \geq 5$. So $d_F \geq 12$ contradicting the semistability of $E$.

Let $g = 7, \;d_E = 18$ and $h^0(E) = 8$. So $d_F \geq 11, h^0(F) \geq 5$. Lemma \ref{lem2.2} implies that $F$ is semistable. Since $h^0(F) \geq 5$ and 
$\Cl_2(C) = 3$, this implies $d_F \geq 12$. So $\varphi$ is not surjective. Hence $h^0(F) \geq 6$ implying $d_F \geq 14$, which contradicts the semistability of $E$.

Now suppose $g=8, \; d_E = 18$ and $h^0(E) = 8$.Then $L^* \otimes K_C$ has degree 9 and $h^0(L^* \otimes K_C) = 4$. So $d_F \geq 9$ and $h^0(F) \geq 4$. 
If $F$ is not semistable, it has a line subbundle of degree $\geq 5$ which contradicts Lemma \ref{lem2.2}. 
So $F$ is semistable. If $\varphi$ is surjective, then $d_F = 9, \; h^0(F) \geq 4$ contradicting $\Cl_2(C) = 3$. If $\varphi$ is not surjective, we have $h^0(F) \geq 5$ and 
hence $d_F \geq 12$. So by semistability of $E$ we have $d_F = 12, \; d_{E/F} = 6$ which implies $h^0(E/F) \leq 2$. Since $h^0(F) = 5$, this contradicts $h^0(E) = 8$.

Finally, suppose $g = 8, \; d_E = 20$ and $h^0(E) = 9$. So $d_F \geq 11$. If $F$ is not semistable, we get a contradiction to Lemma \ref{lem2.2}. 
Hence $F$ is semistable. If $\varphi$ is surjective, then $d_F = 11$ and $h^0(F) \geq 5$. This is a contradiction since $\Cl_2(C) = 3$. If $\varphi$ is not surjective, $h^0(F) \geq 6$
giving $d_F \geq 14$. This contradicts the semistability of $E$.
\end{proof}

Combining Propositions \ref{prop3.1} and \ref{prop3.4} we get

\begin{theorem} \label{thm4.6}
For a curve $C$ of genus $7$ with $\Cl(C) = 3$, we have
$$
\Cl_3(C) = 3.
$$ 
\end{theorem}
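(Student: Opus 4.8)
The plan is to establish the two inequalities $\Cl_3(C)\le 3$ and $\Cl_3(C)\ge 3$ separately, the second being the substantive one. For the upper bound I would take a line bundle $L$ computing $\Cl(C)=3$ and consider the polystable bundle $L^{\oplus 3}$: it is semistable of rank $3$ with $h^0(L^{\oplus 3})=3h^0(L)\ge 6$ and $\mu(L^{\oplus 3})=d_L\le g-1$, so it contributes to $\Cl_3(C)$, and since $\gamma(L^{\oplus 3})=\gamma(L)=3$ this yields $\Cl_3(C)\le 3$.

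For the lower bound, recall that we already know $\Cl_3(C)\ge\frac 83$. Because $d_E$ and $h^0(E)$ are integers and $n=3$, every value $\gamma(E)$ for a rank-$3$ bundle lies in $\frac 13\ZZ$; hence so does $\Cl_3(C)$, and the only value in $\frac 13\ZZ$ with $\frac 83\le\gamma<3$ is $\frac 83$ itself. It therefore suffices to show that no semistable rank-$3$ bundle $E$ satisfies $\gamma(E)=\frac 83$, $h^0(E)\ge 6$ and $\mu(E)\le g-1=6$.

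Suppose such an $E$ exists. The condition $\gamma(E)=\frac 83$ is equivalent to $d_E=2h^0(E)+2$, and combining $h^0(E)\ge 6$ with $d_E\le 3(g-1)=18$ leaves exactly the three numerical types $(d_E,h^0(E))\in\{(14,6),(16,7),(18,8)\}$. The type $(14,6)$ is excluded by Proposition \ref{prop3.1}. The remaining two types have $d_E>2g=14$ and $\gamma(E)=\frac 83<3$, so, being contributing bundles, they are excluded by Proposition \ref{prop3.4}. This contradiction shows $\Cl_3(C)>\frac 83$, whence $\Cl_3(C)=3$.

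Given the two cited propositions, the only genuine work is the bookkeeping that produces the exhaustive list of three numerical types, together with the observation that the borderline degree $d_E=2g=14$ is \emph{not} covered by the degree-$>2g$ result and requires the separate, more delicate Proposition \ref{prop3.1} (which rests on the dual span construction of \eqref{eq2.6} and an exterior-power and Serre-duality analysis). The other two types fall uniformly under Proposition \ref{prop3.4}, so I expect the main obstacle to lie entirely inside those two propositions rather than in the combination performed here.
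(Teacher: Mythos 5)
Your proof is correct and follows essentially the same route as the paper, which derives Theorem \ref{thm4.6} precisely by combining Propositions \ref{prop3.1} and \ref{prop3.4}; your explicit enumeration of the numerical types $(14,6)$, $(16,7)$, $(18,8)$ via $d_E=2h^0(E)+2$ simply makes transparent the case list that the paper imports from its reference [ln2] inside the proof of Proposition \ref{prop3.4}. The upper bound via $L^{\oplus 3}$ is the standard fact $\Cl_3(C)\le\Cl(C)$ that the paper uses implicitly.
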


\section{Bundles of degree 16}

Suppose $g\ge8$, $\Cl(C) =3$ and $E$ is semistable of rank 3 and degree 16 with $h^0(E) = 7$. Again $E$ computes $\Cl_3(C)$ and is generated. Define $D(E)$ by the exact sequence \eqref{eq2.6}
and note that now $D(E)$ is of rank 4 with $h^0(D(E)) \geq 7$.

\begin{lem} \label{lem4.1}
$D(E)$ is stable of degree $16$. In fact,\\
{\em (1)} $D(E)$ has no quotient line bundle of degree $\leq 4$;\\
{\em (2)} $D(E)$ has no stable quotient bundle of rank $3$ and degree $\leq 14$;\\ 
{\em (3)} $D(E)$ has no stable quotient bundle of rank $2$ and degree $\leq 9$.
\end{lem}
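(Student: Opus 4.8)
The plan is to establish the three numbered assertions directly; together with the obvious computation of the degree they give stability, because a non-stable $D(E)$ would admit a \emph{stable} quotient $G$ with $\mu(G)\le\mu(D(E))=4$, and such a $G$ would have rank $1$ and degree $\le4$, rank $2$ and degree $\le8$, or rank $3$ and degree $\le12$, each excluded by (1), (3) and (2) respectively. Throughout I use the two standing facts about the dual span: $D(E)$ is generated and $h^0(D(E)^*)=0$. Consequently every quotient $G$ of $D(E)$ is generated by the image $W$ of $H^0(E)^*\subseteq H^0(D(E))$, which is precisely the datum occurring in the bottom row of diagram (3.6). From that diagram I read off a subsheaf $D_{W,G}$ of $E$ of rank $\dim W-\rk G$ and degree $d_G$, the embedding being valid once $\coker(\alpha)\simeq\coker(\beta)$ is shown to be torsion; as in the proof of Lemma \ref{lem2.5} this follows from $h^0(E^*)=0$, which forces $\alpha\ne0$.

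For (1), a quotient line bundle $L$ of degree $\le4$ is generated; if $d_L>0$ then $h^0(L)\ge2$ and hence $d_L\ge d_1\ge5$, a contradiction, while if $d_L=0$ then $L=\cO_C$ and $\cO_C=L^*\hra D(E)^*$ contradicts $h^0(D(E)^*)=0$. For (3), let $G$ be a stable rank-$2$ quotient with $d_G\le9$. Being generated and stable it has $h^0(G)\ge3$, and since its slope is at most $\tfrac92\le g-1$, the equality $\Cl_2(C)=3$ rules out $h^0(G)\ge4$; hence $h^0(G)=3$, so $\dim W=3$, $W=H^0(G)$ and $D_{W,G}=D(G)$ is a line bundle of degree $d_G$. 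Lemma \ref{lemPR} gives $d_G\ge d_2\ge7$, so the embedding $D(G)\hra E$ produces a line subbundle of $E$ of degree $\ge7>\Cl(C)$, contradicting Lemma \ref{lem2.2}.

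For (2), let $G$ be a stable rank-$3$ quotient with $d_G\le14$; it is generated, so $h^0(G)\ge4$ and $\dim W\ge4$, and $D_{W,G}$ has rank $\dim W-3$. If $\dim W=4$, then $D_{W,G}$ is a line subbundle of $E$ of degree $d_G$, and Lemma \ref{lemPR} gives $d_G\ge d_3\ge9$; a line subbundle of degree $\ge9>\tfrac{16}{3}$ contradicts the semistability of $E$. If $\dim W=5$, then $D_{W,G}$ is a rank-$2$ subbundle of $E$ of degree $d_G$, and Lemma \ref{lemPR} gives $d_G\ge d_6\ge13$ (here $g\ge8$); since $d_E=16\le2g$, this contradicts Lemma \ref{lem2.4}, which forbids rank-$2$ subbundles of degree $\ge2\Cl(C)+2=8$. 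If $\dim W=6$, then $h^0(G)\ge6$, so $G$ contributes to $\Cl_3(C)$ and $\gamma(G)\ge\tfrac83$ forces $d_G\ge2h^0(G)+2\ge14$; hence $d_G=14$ and $h^0(G)=6$, so $G$ is a semistable rank-$3$ bundle of degree $14$ with $h^0=6$, which is impossible by Proposition \ref{prop3.1}. Finally $\dim W=7$ gives $h^0(G)\ge7$ and, by the same Clifford estimate, $d_G\ge16>14$, a contradiction.

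I expect the rank-$3$ case (2) to be the main obstacle. One must simultaneously translate the size of $\dim W$ into a lower bound for $d_G$ (through Lemma \ref{lemPR}, whose hypothesis on subbundles with excess sections has to be checked using the stability of $G$ and the values $d_1\ge5$, $d_2\ge7$), and verify in each subcase that the cokernels in (3.6) are torsion so that $D_{W,G}$ really embeds in $E$ with degree $d_G$. The subcase $\dim W=6$ is the crux: there the Clifford inequality alone is not decisive, and one is forced to appeal to the non-existence statement of Proposition \ref{prop3.1}, which is exactly why the degree-$14$ analysis had to precede the degree-$16$ analysis.
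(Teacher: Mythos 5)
Your overall strategy is the paper's: use diagram (3.6) to convert quotients of $D(E)$ into subbundles of $E$ and then invoke Lemmas \ref{lem2.2} and \ref{lem2.4} and Proposition \ref{prop3.1}. But there is a genuine gap in your proof of (3). You assert that, for every quotient $G$, the vanishing $h^0(E^*)=0$ forces $\alpha\ne0$ and hence that $\coker(\alpha)\simeq\coker(\beta)$ is torsion, ``as in the proof of Lemma \ref{lem2.5}''. That inference is valid only when $N=\ker(D(E)\to G)$ has rank $1$, so that $\alpha\ne0$ automatically makes $\coker(\alpha)$ of rank $0$. In Lemma \ref{lem2.5} this is the case because $D(E)$ there has rank $3$ and $G$ rank $2$; here $D(E)$ has rank $4$, so for a rank-$2$ quotient $G$ the kernel $N$ has rank $2$ and $\alpha\colon W'\otimes\cO_C\to N$ may have rank $1$. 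In that case $\coker(\beta)$ has rank $1$, so $\beta=0$ and $M\simeq E^*$, and $D_{W,G}=D(G)$ does \emph{not} embed in $E$; your contradiction with Lemma \ref{lem2.2} evaporates. The paper disposes of this case by a separate argument: $\IM(\alpha)\simeq W'\otimes\cO_C/E^*$ has rank $1$ and degree $16$, so it generates a line subbundle of $D(E)$ of degree at least $16$, whose quotient is a generated rank-$3$ sheaf of degree $\le 0$, contradicting $h^0(D(E)^*)=0$. Something of this kind must be supplied; without it neither (3) nor the stability of $D(E)$ (which you deduce from (3)) is established. Note that (2) is unaffected, since there $N$ has rank $1$.

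A secondary point, in part (2): your degree bounds $d_G\ge d_3$ (for $\dim W=4$) and $d_G\ge d_6$ (for $\dim W=5$) rest on Lemma \ref{lemPR}, whose hypothesis you acknowledge but do not verify. For a stable $G$ of rank $3$ and degree $\le 14$, stability only forces rank-$2$ subbundles to have degree $<\frac{28}{3}$, which does not exclude a rank-$2$ subbundle $F$ with $h^0(F)=3$ and $d_F\ge d_2\ge7$; so the hypothesis of Lemma \ref{lemPR} can genuinely fail. This is repairable: Lemma \ref{lema}(iii) gives $h^0(G)\le3$ whenever $d_G<\min\left\{d_3,\frac{3d_2}{2}\right\}$, hence $d_G\ge9$ as soon as $h^0(G)\ge4$, and $9\ge 8=2\Cl(C)+2$ is enough for the contradictions with Lemma \ref{lem2.2} (rank $1$) and Lemma \ref{lem2.4} (rank $2$). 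This is in fact the bound the paper uses. The remainder of your part (2) --- in particular isolating $\dim W=6$, forcing $d_G=14$ and $h^0(G)=6$, and quoting Proposition \ref{prop3.1} --- matches the paper's argument, as does your part (1).
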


\begin{proof}
(1) Suppose $D(E)$ has a quotient line bundle of degree $\leq 4$. This cannot be generated, contradicting the fact that $D(E)$ is generated.

(2) Suppose $D(E)$ has a stable quotient bundle $G$ of rank 3 and degree $\leq 14$. If $h^0(G) \geq 6$, we get a contradiction to Proposition \ref{prop3.1} or to $\Cl_3(C) \geq \frac{8}{3}$. 
So $h^0(G) \leq 5$ and moreover $G$ is generated, since $D(E)$ is generated.

$G$ cannot be trivial, since $h^0(G^*) = 0$. So $h^0(G) \geq 4$. Consider the diagram (3.6).  Since $H^0(E^*) =0$, the map $\alpha$ is non-zero. So $\coker(\alpha)$ and $\coker(\beta)$ are torsion sheaves. This implies that $E$ has a subbundle $F$ generated by 
$ D_{W,G}$ of rank $\dim W - 3 = 1$ or 2. We have
$$
d_F \geq d_G  \geq 9,
$$
contradicting Lemma \ref{lem2.2} or \ref{lem2.4}.

(3) Suppose $D(E)$ has a stable quotient bundle $G$ of rank 2 and degree $\leq 9$. Then $G$ is generated with $h^0(G) \geq 3$. If $h^0(G) \geq 4$, this contradicts $\Cl_2(C) = 3$. So $h^0(G) = 3$.

Using diagram (3.6), we note that now $N$ has rank $2$. The map $\alpha$ is again non-zero. If it has rank 2, we  obtain a line subbundle $F$ of $E$ with $d_F\ge d_G \geq d_2 \geq 7$; this contradicts Lemma \ref{lem2.2}. If $\alpha$ has rank $1$, then $\coker(\beta)$ has rank $1$ and $M\simeq E^*$. It follows that $\IM(\alpha)$ generates a subbundle of $D(E)$ of degree at least $16$. The quotient by this subbundle therefore has rank $3$ and degree $\le0$, which is a contradiction since it is generated and $h^0(D(E)^*)=0$.
\end{proof}

\begin{prop} \label{prop4.2}
Suppose $g \geq 8$ and $\Cl(C) = 3$. Then there is no semistable rank-$3$ bundle of degree $16$ with $h^0(E) = 7$.
\end{prop}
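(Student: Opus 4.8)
```latex
The plan is to mirror the structure of the proof of Proposition \ref{prop3.1}, using the
dual span bundle $D(E)$ and the exterior-square sequence, but now adapted to the
rank-4 situation. Suppose for contradiction that $E$ is semistable of rank $3$, degree
$16$ with $h^0(E)=7$. By Lemma \ref{lem4.1}, the dual span $D(E)$ is a \emph{stable}
bundle of rank $4$ and degree $16$ with $h^0(D(E))\ge 7$, and it is generated. I would
first take the second exterior power of \eqref{eq2.6} to produce an exact sequence
$$
0 \ra P \ra \wedge^2 H^0(E) \otimes \cO_C \ra \wedge^2 E \ra 0,
$$
where $P$ fits into
$$
0 \ra \wedge^2 D(E)^* \ra P \ra D(E)^* \otimes E \ra 0.
$$
The aim is to estimate $h^0(P)$ from above using the stability of $D(E)$ and the
semistability of $E$; since $D(E)^*$ and $\wedge^2 D(E)^*$ are negative-slope semistable
bundles with no sections, and $E$ is semistable of slope $\tfrac{16}{3}$, I expect
$h^0(D(E)^*\otimes E)$ and $h^0(\wedge^2 D(E)^*)$ to be controlled, forcing $h^0(P)$ to
be small.

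With such a bound on $h^0(P)$, the first sequence forces a lower bound on
$h^0(\wedge^2 E)$. Now $\wedge^2 E$ is a rank-$3$ bundle, and since $\rk E=3$ we have
$\wedge^2 E \simeq \det(E)\otimes E^*$, which is semistable of degree $2\cdot16=32$. I
would then pass to the Serre-dual bundle $K_C\otimes\wedge^2 E^*$, which is semistable
of rank $3$ and degree $6g-2-32=6g-34$, and translate the lower bound on
$h^0(\wedge^2 E)$ into a lower bound on $h^1(\wedge^2 E)=h^0(K_C\otimes\wedge^2 E^*)$.
The resulting bundle should have $\gamma$ strictly below $\Cl_3(C)=\tfrac{8}{3}$, or else
violate the established fact that $\Cl_3(C)\ge\tfrac{8}{3}$, yielding the contradiction.
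For small $g$ (the borderline genera) I would argue separately, comparing against
$d_2$, $d_3$ and Lemma \ref{lema}(iii) exactly as in the genus-$7$ case of Proposition
\ref{prop3.1}, since the numerical slack is tightest there.

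The main obstacle I anticipate is getting a sharp enough bound on $h^0(P)$, and hence on
$h^0(\wedge^2 E)$: in the rank-4 dual span the sequence for $P$ involves
$D(E)^*\otimes E$, a rank-$12$ bundle, rather than the cleaner rank-$9$ situation in
Proposition \ref{prop3.1}. I would need the stability of $D(E)$ (items (1)--(3) of
Lemma \ref{lem4.1}) to rule out sub/quotient line and rank-$2$ bundles that could
contribute unwanted sections to $D(E)^*\otimes E$. The delicate point is whether the
crude vanishing $h^0(D(E)^*)=0$ suffices, or whether one must examine
$\Hom(D(E),E)=H^0(D(E)^*\otimes E)$ directly via the induced maps and again invoke the
non-existence results for rank-$2$ and rank-$3$ bundles (Lemmas \ref{lem2.2} and
\ref{lem2.4}, Proposition \ref{prop3.1}). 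I would also keep in mind that the genus
range $g\ge 8$ may need to be split, with the largest genera handled by the clean
$\gamma$-inequality and the smallest by explicit gonality-sequence bookkeeping.
```
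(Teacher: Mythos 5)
Your skeleton matches the paper's argument --- form the stable rank-$4$ bundle $D(E)$ via Lemma \ref{lem4.1}, take the exterior square of \eqref{eq2.6}, bound $h^0(P)$, deduce a lower bound on $h^0(\wedge^2E)$, and contradict $\Cl_3(C)\ge\frac83$ via $\wedge^2E\simeq\det(E)\otimes E^*$ and its Serre dual --- but the step you yourself flag as ``the main obstacle'' is the heart of the proof, and you have not closed it. The crude vanishing cannot work here: $D(E)^*\otimes E$ is semistable of slope $-4+\frac{16}{3}=\frac43>0$, so nothing formal forces $h^0(D(E)^*\otimes E)=\dim\Hom(D(E),E)$ to be small, and any Clifford-type estimate on this rank-$12$ bundle is hopelessly weak. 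Since $\dim\wedge^2H^0(E)=21$, what is actually needed is $\dim\Hom(D(E),E)\le4$, which gives $h^0(P)\le4$ (as $h^0(\wedge^2D(E)^*)=0$) and hence $h^0(\wedge^2E)\ge17$; this is exactly the strength required to make $\gamma$ of the relevant rank-$3$ bundle equal to $\frac43<\frac83$.

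The paper closes this gap by analysing an individual non-zero $\varphi\colon D(E)\to E$ rather than the bundle $D(E)^*\otimes E$. Writing $F$ for the subbundle of $E$ generated by $\IM(\varphi)$: rank $1$ is excluded because stability of $D(E)$ (Lemma \ref{lem4.1}(1)) forces $d_F\ge5$, contradicting Lemma \ref{lem2.2}; rank $2$ is excluded because stability forces $d_F\ge9$, contradicting Lemma \ref{lem2.4}; so $F=E$, and applying Lemma \ref{lem4.1}(1)--(3) to $\IM(\varphi)$ itself (splitting into the stable and unstable cases) yields $d_{\IM(\varphi)}\ge15$. Since $d_E=16$, any non-zero $\varphi$ therefore drops rank at no more than one point; as dropping rank at a prescribed point imposes $2$ linear conditions on $\varphi$, it follows that $\dim\Hom(D(E),E)\le4$. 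This quantitative input is what your sketch is missing, and without it the chain of inequalities does not produce a contradiction. Incidentally, your anticipated case split over small genera is unnecessary: with $h^0(\wedge^2E)\ge17$ one gets $\gamma=\frac43$ uniformly, the only (routine) check being whether $\wedge^2E$ or $K_C\otimes\wedge^2E^*$ is the one with slope $\le g-1$.
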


\begin{proof} 
Suppose $E$ is such a bundle. Then
$\wedge^2E$ is semistable of degree 32.
 
Suppose there exists a non-zero homomorphism 
$$
\varphi: D(E) \ra E.
$$
Let $F$ denote the subbundle of $E$ generated by $\IM (\varphi)$. If $\rk F = 1$, then $d_F \geq 5$ by Lemma \ref{lem4.1}, which contradicts Lemma \ref{lem2.2}.

Suppose $\rk F = 2$. Then $d_F \geq 9$ by stability of $D(E)$. This contradicts Lemma \ref{lem2.4}.

If $\rk F = 3$, then $F = E$ and $\IM (\varphi)$ is a subsheaf of $E$ of maximal rank. We claim that $d_{{\small \IM} (\varphi)} \geq 15$.
If $\IM (\varphi)$ is stable, this follows from Lemma \ref{lem4.1}(2). If $\IM (\varphi)$ is not stable and
$d_{{\tiny \IM}(\varphi)} \leq 14$, then either it possesses a quotient line bundle of degree $\leq 4$ or a 
stable quotient bundle of rank 2 of degree $\leq 9$. Again the claim follows from Lemma \ref{lem4.1}(1) and (3).

It follows that $\varphi$ cannot drop rank at 2 points. Since the number of conditions for $\varphi$ to drop rank at any chosen point is 2, we conclude that $\dim \Hom (D(E),E) \leq 4$. 
By \eqref{eq3.3} and \eqref{eqn3.3} we obtain
$$
h^0(\wedge^2 E) \geq 17.
$$
So $K_C \otimes \wedge^2 E^*$ is semistable of degree $6g -38$ with $h^0(K_C \otimes \wedge^2 E^*) \geq 3g -18$. This contradicts $\Cl_3(C) \geq \frac{8}{3}$.
\end{proof}

Combining Propositions \ref{prop4.2}, \ref{prop3.1} and \ref{prop3.4}, we obtain

\begin{theorem} \label{thm8}
For a curve $C$ of genus $8$ with $\Cl(C) = 3$, we have
$$
\Cl_3(C) = 3.
$$
\end{theorem}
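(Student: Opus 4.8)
The plan is to combine the three non-existence results just established. We already know that $\Cl_3(C)\ge\frac83$, and it is standard that $\Cl_3(C)\le\Cl(C)=3$: if $L$ is a line bundle computing $\Cl(C)$, then $L^{\oplus3}$ is semistable of rank $3$ with $h^0\ge6$, $\mu=d_L\le g-1$ and $\gamma(L^{\oplus3})=\gamma(L)=\Cl(C)$, so it contributes to $\Cl_3(C)$. Since $\gamma$ takes values in $\frac13\ZZ$ on rank-$3$ bundles and $\frac83$ is the only such value in the interval $[\frac83,3)$, it suffices to exclude the possibility $\Cl_3(C)=\frac83$; equivalently, I want to show that no semistable rank-$3$ bundle $E$ with $\gamma(E)=\frac83$ contributes to $\Cl_3(C)$.

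Next I would pin down the possible numerical types of such an $E$. The equality $\gamma(E)=\frac83$ forces $d_E=2h^0(E)+2$. The contributing conditions $h^0(E)\ge 2n=6$ and $\mu(E)\le g-1=7$ (so $d_E\le21$) then restrict $h^0(E)$ to $\{6,7,8,9\}$, giving exactly the four possibilities
$$
(d_E,h^0(E))\in\{(14,6),\,(16,7),\,(18,8),\,(20,9)\}.
$$

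Finally I would dispatch these four cases using the propositions above. The type $(14,6)$ is excluded by Proposition \ref{prop3.1}, and $(16,7)$ by Proposition \ref{prop4.2}, whose hypothesis $g\ge8$ holds since $g=8$. The remaining two types have $d_E\in\{18,20\}$, both exceeding $2g=16$, while $\gamma(E)=\frac83<3=\Cl(C)$, so they are excluded by Proposition \ref{prop3.4}. As all four numerical types are impossible, no bundle computes $\Cl_3(C)=\frac83$; hence $\Cl_3(C)\ge3$, and combined with the upper bound we get $\Cl_3(C)=3$.

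The substantive content lies entirely in Propositions \ref{prop3.1}, \ref{prop4.2} and \ref{prop3.4}, so the present argument is essentially bookkeeping. The only point requiring genuine care is the completeness of the enumeration: I must verify that $\mu(E)\le g-1$ really caps $d_E$ at $21$ (forcing $h^0(E)\le9$), and that the dichotomy between degree $\le 2g$ and degree $>2g$ assigns each numerical type to a proposition whose hypotheses actually apply here — in particular that the case $(16,7)$ falls under the $g\ge8$ regime of Proposition \ref{prop4.2} and is not mistakenly handed to Proposition \ref{prop3.4}, which only governs degrees strictly greater than $2g$.
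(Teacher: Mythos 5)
Your proposal is correct and follows the paper's own route: the paper proves Theorem \ref{thm8} by simply combining Propositions \ref{prop3.1}, \ref{prop4.2} and \ref{prop3.4}, and your enumeration of the numerical types $(14,6)$, $(16,7)$, $(18,8)$, $(20,9)$ (the last two falling under the degree $>2g$ case) matches the case split those propositions are designed to cover. The bookkeeping you supply — the upper bound $\Cl_3(C)\le\Cl(C)$ via $L^{\oplus3}$, the $\frac13\ZZ$ quantization, and the derivation $d_E=2h^0(E)+2$ — is exactly what the paper leaves implicit.
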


\section{Genus 13 and 14}

We start with a statement of a part of Hartshorne's version of Noether's Theorem in the case of an irreducible plane curve of degree 7.

\begin{prop} {\em (\cite[Theorem 2.1]{h}) \label{prop5.1}}
Let $\Gamma$ be an irreducible plane curve of degree $7$ and $Z$ a closed subscheme of finite length $\ell \geq 1$. Denote by $L(Z)$ the associated torsion-free sheaf.
Suppose $\ell \leq 14$ and write $\ell = 7r-e$ with $r=1$ or $2$ and $0 \leq e \leq 6$. Then we have
$$
h^0(L(Z)) \leq  \left\{ \begin{array}{lll}
                          \frac{1}{2}r(r+1) & \mbox{if} & e > r+1,\\
                          \frac{1}{2}(r+1)(r+2) -e & \mbox{if} & e \leq r+1.
                         \end{array} \right.
$$
Furthermore, equality occurs if and only if

{\em (a)} $Z = \Gamma \cap \Gamma' + Z_0$ where $\Gamma'$ is a curve of degree $r-1$ and $Z_0$ a subscheme of length $7-e$, in the first case, or 

{\em (b)} $Z = \Gamma \cap \Gamma'' -E$ where $E$ is a subscheme of length $e$ and $\Gamma''$ a curve of degree $r$ containing $E$, in the second case. 
\end{prop}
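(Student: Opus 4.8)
The plan is to convert the statement into a count of adjoint plane curves through $Z$ and then to bound that count by residuation (Noether's $AF+BG$ theorem). First I would record that $\Gamma$, being a plane curve, is Gorenstein with dualizing sheaf $\omega_\Gamma\cong\cO_\Gamma(4)$ and arithmetic genus $15$, and that the torsion-free sheaf in question is the generalized-divisor sheaf $L(Z)=\cO_\Gamma(Z)$. Riemann--Roch and Serre duality on $\Gamma$ then give
$$h^0(L(Z)) = \ell - 14 + h^0(\omega_\Gamma \otimes \cO_\Gamma(-Z)).$$
Since $\deg\Gamma = 7 > 4$, restriction identifies $H^0(\PP^2,\cO_{\PP^2}(4))$ with $H^0(\Gamma,\cO_\Gamma(4))$, so $h^0(\omega_\Gamma(-Z))$ is exactly the dimension $q(Z)$ of the linear system of plane quartics $Q$ with $Z\subseteq Q\cdot\Gamma$. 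Thus the whole statement becomes a sharp bound on $q(Z)$, and I would work with this adjoint quartic system throughout; note this already explains the appearance of the quantities $\frac12 r(r+1)$ and $\frac12(r+1)(r+2)$ as dimensions of spaces of forms of degree $r-1$ and $r$.

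The mechanism for bounding $q(Z)$ is residuation. For a quartic $Q\supseteq Z$ the residual $W:=Q\cdot\Gamma - Z$ has degree $28-\ell = 7(4-r)+e$, and Noether's theorem controls precisely which quartics vanish on a given complete intersection on $\Gamma$. The governing dichotomy is whether a curve of low degree is forced through $Z$: a quartic must contain a line $\ell_0$ once $\ell_0$ carries more than $4$ points of $Z$, and must contain a conic once that conic carries more than $8$. For $r=1$ (so $\ell\le7$) only the line phenomenon can occur, while for $r=2$ ($8\le\ell\le14$) both can. In the spread-out regime $Z$ imposes independent conditions on quartics --- the relevant Castelnuovo/Cayley--Bacharach input being that a length-$\ell$ subscheme with $\ell\le14$ fails independence only through collinearity or coconicity of a controlled part of $Z$ --- and a direct count yields the bound $\frac12 r(r+1)$. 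In the degenerate regime, forcing a line (resp.\ conic) through $Z$ and residuating by it reduces the quartics to (line or conic) $\times$ (curve of complementary degree) through the residual divisor, and counting the latter gives $\frac12(r+1)(r+2)-e$.

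I would then organise the argument as a short case analysis in $r$ and in the degree of the minimal curve meeting $Z$ in an excess of points, reading off in each case which of the two expressions is the correct bound according as $e>r+1$ or $e\le r+1$. For the equality clause I would invoke the uniqueness half of $AF+BG$: a quartic vanishing on $\Gamma\cap\Gamma''$ with $\deg\Gamma''=r$ must be $\Gamma''$ times a conic, so attaining the maximum forces $Z$ to be residual to such a curve, i.e.\ $Z=\Gamma\cap\Gamma'+Z_0$ or $Z=\Gamma\cap\Gamma''-E$ as stated. The main obstacle I anticipate is precisely this equality analysis, together with making the residuation rigorous when $Z$ is non-reduced and $\Gamma$ is singular: one must run everything with generalized divisors on the Gorenstein curve $\Gamma$ and the scheme-theoretic form of Noether's theorem, rather than with reduced point counts, and verify that an extremal $Z$ is genuinely of the asserted residual form and not merely supported on a line or conic.
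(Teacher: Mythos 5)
The paper does not prove this proposition at all: it is quoted verbatim (specialized to degree $7$) from Hartshorne \cite[Theorem 2.1]{h}, so there is no internal proof to compare against. Your strategy is, in outline, the one used in that source: treat $Z$ as a generalized divisor on the Gorenstein curve $\Gamma$, use Riemann--Roch and duality with $\omega_\Gamma\cong\cO_\Gamma(4)$ and $p_a(\Gamma)=15$ to rewrite $h^0(L(Z))=\ell-14+h^0(\omega_\Gamma(-Z))$, identify $h^0(\omega_\Gamma(-Z))$ with the space of adjoint quartics through $Z$ (the restriction $H^0(\PP^2,\cO(4))\to H^0(\Gamma,\cO_\Gamma(4))$ is indeed an isomorphism since $\deg\Gamma=7$), and then control that system by residuation and the $AF+BG$ theorem. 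The numerology checks out, e.g.\ for $r=1$, $e>2$ the bound is equivalent to $Z$ imposing independent conditions on quartics.

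However, as written this is a plan rather than a proof, and the gap sits exactly where the content of the theorem lies. The assertion that a length-$\ell$ subscheme with $\ell\le14$ fails to impose independent conditions on quartics only through an excess of points on a line or a conic is not a citable black box in this range: the standard Cayley--Bacharach/Castelnuovo criterion for degree-$4$ curves applies only for $\ell\le 2\cdot4+2=10$, whereas the case $r=2$, $e\le3$ requires $11\le\ell\le14$ --- precisely the regime in which the bound drops to $6-e$ and the structure statement (b) must be extracted. Likewise ``a direct count yields the bound'' and ``counting the latter gives $\frac12(r+1)(r+2)-e$'' are placeholders for the induction on $r$ and the residuation bookkeeping that constitute the actual argument, and the equality analysis (that an extremal $Z$ is genuinely of the form $\Gamma\cap\Gamma'+Z_0$ or $\Gamma\cap\Gamma''-E$, with all statements scheme-theoretic on a possibly singular $\Gamma$) is deferred. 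Since the paper's intent is to import this result, the honest options are either to keep it as a citation or to reproduce Hartshorne's induction in full; the reduction to the adjoint system alone does not yet prove the statement.
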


\begin{lem} \label{lem5.2}
Let $\pi: C \ra \Gamma$ be the normalization of an irreducible plane curve $\Gamma$ of degree $7$ 
such that $C$ has genus $14$ and Clifford index $3$. The only line bundles computing $\Cl(C)$ are the hyperplane bundle $H$ and the pencil of degree $5$ obtained by projection from the 
singular point. 
\end{lem}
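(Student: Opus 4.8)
We have a genus-$14$ curve $C$ that is the normalization of an irreducible plane septic $\Gamma$, with $\Cl(C)=3$. The plan is to identify \emph{all} line bundles $L$ on $C$ with $\gamma(L)=\Cl(C)=3$, i.e. all $L$ with $h^0(L)\geq 2$ and $d_L - 2(h^0(L)-1)=3$. The two asserted solutions are the hyperplane bundle $H$ (which has $d_H=7$, $h^0(H)\geq 3$, so $\gamma(H)=7-2\cdot 2=3$) and the projection pencil $P$ of degree $5$ (with $h^0(P)=2$, so $\gamma(P)=5-2=3$). The strategy is to translate the problem into a statement about zero-dimensional subschemes of the plane curve $\Gamma$, and then apply the Noether-type bounds of Proposition~\ref{prop5.1}.

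**Reduction to subschemes of $\Gamma$ and case analysis on the degree.** First I would recall that a line bundle $L$ computing $\Cl(C)=3$ satisfies $d_L=2h^0(L)+1$, and by the standard symmetry $\gamma(K_C\otimes L^*)=\gamma(L)$ together with the genericity of the Clifford index we may assume $d_L\leq g-1=13$ (indeed $\Cl(C)\leq[\frac{g-1}{2}]=6$ forces the relevant degrees into a bounded window). The key geometric input is that an effective line bundle on $C$ of the form $\mathcal{O}_C(D)$ pushes forward to a torsion-free sheaf $L(Z)$ on $\Gamma$, and conversely Proposition~\ref{prop5.1} controls $h^0$ of such sheaves in terms of the length $\ell$ and the residue $e$ in the expression $\ell=7r-e$. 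So I would enumerate the possible degrees $d_L$ of a line bundle computing $\Cl(C)$, namely $d_L\in\{5,7,9,11,13\}$ with corresponding $h^0(L)\in\{2,3,4,5,6\}$, and for each degree bound $h^0(L)$ against the plane-curve estimate, pulling/pushing between $C$ and $\Gamma$ as needed.

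**Eliminating the intermediate degrees.** The substance of the argument is showing that degrees $9$, $11$, and $13$ cannot occur, and that in degrees $5$ and $7$ only $P$ and $H$ arise. For these I would apply Proposition~\ref{prop5.1} with $r=1,2$ and the appropriate $e$: a line bundle computing $\Cl(C)$ forces $h^0$ to meet the extremal bound in the proposition, so the equality cases (a) and (b) apply, and these describe $Z$ explicitly as a residual intersection $\Gamma\cap\Gamma'$ with a curve $\Gamma'$ of low degree (plus or minus a short subscheme). One then checks that the only line bundles on $C$ arising this way are multiples/restrictions of the linear series cut by lines (giving $H$) or by the projection from the singular point (giving $P$); any other configuration either fails $h^0\geq 2$ or violates $\gamma=3$. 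Care is needed because the singular point of $\Gamma$ means the pullback $\pi^*H$ and the structure sheaf sequences must be handled on $C$ rather than on $\Gamma$, and the length $\ell$ must be matched to $d_L$ via the contribution of the singularity to the arithmetic genus.

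**Main obstacle.** I expect the hardest step to be the precise bookkeeping that converts the equality cases of Proposition~\ref{prop5.1} into a genuine classification of line bundles on the \emph{smooth} curve $C$, as opposed to torsion-free sheaves on the singular $\Gamma$. The normalization $\pi$ introduces a discrepancy between $h^0$ on $C$ and on $\Gamma$ controlled by the conductor/adjoint ideal at the singular point, and one must verify that the extremal subschemes $Z$ forced by cases (a) and (b)—residual to a line or a conic—pull back to exactly the series $|H|$ and $|P|$ without producing spurious extra candidates in the intermediate degrees $9$ and $11$. Handling the boundary value $\ell=14$ (where $r=2$, $e=0$, so the two branches of the bound coincide) and confirming that it yields no new computing bundle beyond $K_C\otimes P^*$ or $K_C\otimes H^*$ under the duality $\gamma(K_C\otimes L^*)=\gamma(L)$ is where the calculation is most delicate.
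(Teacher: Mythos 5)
Your proposal follows essentially the same route as the paper: push $L$ forward to the non-locally-free torsion-free sheaf $\pi_*L$ on $\Gamma$ of length $d_L+1$, run through the possible degrees $5\le d_L\le 13$, bound $h^0$ by Proposition \ref{prop5.1}, and use the equality cases (a), (b) together with the failure of local freeness at the singular point to pin down $H$ and the projection pencil (your parity observation $d_L=2h^0(L)+1$ just lets you skip the even degrees that the paper also disposes of). The obstacles you flag — matching $\ell$ to $d_L$ via the singularity and the boundary case $\ell=14$, $e=0$, excluded because form (b) with $E=\emptyset$ would make $\pi_*L$ locally free — are exactly the points the paper's proof addresses, so the outline is correct.
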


\begin{proof}
The genus formula for plane curves implies that $\Gamma$ has a unique singular point which is an ordinary node or cusp. 
Let $L$ be a line bundle computing $\Cl(C)$. By definition, we have  $d_1\le d_L\le g-1$, so
$$
5 \leq d_L \leq 13.
$$
Note that $\pi_*L$ is not locally free and $d_{\pi_*L} = d_L+1$. We apply Proposition \ref{prop5.1} to $\pi_*L$.
Our restrictions on the degree imply $r=1, \; e\leq 1$ or $ r=2$. 

Suppose $r=1$. If $e=0$, we get $d_L = 6$ and $h^0(\pi_*L) \leq 3$. So $\gamma(L) \geq 2$ with equality only if $h^0(\pi_*L) = 3$. By Proposition \ref{prop5.1}(b) this is impossible, 
since $\pi_*L$ is not locally free. So $h^0(\pi_*L) \leq 2$ and $\gamma(L) \geq 4$.
If $e=1$, we get $d_L =5$ and $h^0(\pi_*L) \leq 2$. So $\gamma(L) = 3$ if and only if $h^0(\pi_*L) = 2$. According to Proposition \ref{prop5.1}(b), 
this happens only for the pencil given by projection 
from the singular point.

If $r=2$ and $e > 3$, we have $d_L \geq 7$ and $h^0(\pi_*L) \leq 3$. So $\gamma(L) \geq 3$ with equality if and only if $d_L = 7$ and $h^0(\pi_*L) =3$. 
According to Proposition \ref{prop5.1}(a), this happens
only if $L$ is the hyperplane bundle.
Finally, if $r=2$ and $e \leq 3$, then $d_L = 13 -e$ and $h^0(\pi_*L) \leq 6-e$. So $\gamma(L) \geq 4$, since $e=0$ is not possible.
\end{proof}

\begin{lem} \label{lem5.3}
Let $\pi: C \ra \Gamma$ be the normalization of an irreducible plane curve $\Gamma$ of degree $7$ such that $C$ has genus $13$ and Clifford index $3$.  
The only line bundles computing $\Cl(C)$ are the hyperplane bundle $H$ and one or two pencils of degree $5$ obtained by projection from the singular points.
\end{lem}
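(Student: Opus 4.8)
The plan is to adapt the method of Lemma \ref{lem5.2}, adjusting for the two features special to genus $13$: the pushforward to $\Gamma$ now has degree $d_L+2$ rather than $d_L+1$, and $\Gamma$ may carry two singular points. First I would determine the singularities. A plane septic has arithmetic genus $15$, so $g=13$ forces the total $\delta$-invariant to be $2$; since a point of multiplicity $\ge 3$ already has $\delta\ge 3$, every singular point is a double point, and the only configurations are two double points of $\delta=1$ (nodes or cusps) or a single double point of $\delta=2$ (a tacnode $A_3$ or an $A_4$). Projection from a double point, which has multiplicity $2$, cuts out a pencil of degree $7-2=5$ on $C$, giving one or two such pencils according to the number of singular points; together with $H$ (for which $d_H=7$, $h^0(H)=3$, $\gamma(H)=3$) these are the asserted bundles, and it remains to show there are no others.

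So let $L$ compute $\Cl(C)=3$. Then $\gamma(L)=3$ forces $d_L$ to be odd, and $d_1\le d_L\le g-1$ leaves $d_L\in\{5,7,9,11\}$ with $h^0(L)=\tfrac{d_L-1}{2}$. As in Lemma \ref{lem5.2} I pass to $\pi_*L$, a rank-$1$ torsion-free sheaf on $\Gamma$ with $h^0(\pi_*L)=h^0(L)$ and degree $\ell=d_L+2$, and apply Proposition \ref{prop5.1} with $\ell=7r-e$. The value $d_L=9$ ($\ell=11$, $r=2$, $e=3$) is excluded at once, since the bound gives $h^0(L)\le 3$ whereas $\gamma(L)=3$ needs $h^0(L)=4$. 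The cases $d_L=7$ and $d_L=11$ I would dispatch uniformly by duality for the finite morphism $\pi$: writing $K_C=4H-\mathfrak{c}$, where $\mathfrak{c}$ is the conductor of degree $2\delta=4$, one has $\pi^{!}\cO_\Gamma(k)=kH-\mathfrak{c}$. For $d_L=7$ ($\ell=9$, $r=2$, $e=5$) the value $h^0(L)=3$ sits at the bound, and the equality case \ref{prop5.1}(a) exhibits $Z$ as containing a line section, hence a nonzero inclusion $\cO_\Gamma(1)\hookrightarrow\pi_*L$; adjunction turns this into a nonzero element of $\Hom_C(H,L)=H^0(L-H)$, and since $\deg(L-H)=0$ this forces $L=H$. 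For $d_L=11$ ($\ell=13$, $r=2$, $e=1$) the value $h^0(L)=5$ sits at the bound, and the equality case \ref{prop5.1}(b) writes $Z=\Gamma\cap\Gamma''-E$ with $\Gamma''$ a conic and $E$ of length $1$, hence a nonzero inclusion $\pi_*L\hookrightarrow\cO_\Gamma(2)$; adjunction now produces a nonzero element of $\Hom_C(L,\pi^{!}\cO_\Gamma(2))=H^0(2H-\mathfrak{c}-L)$. But $2H-\mathfrak{c}-L$ has degree $14-4-11=-1$, so this group vanishes, whence $h^0(L)\le 4$ and the case $d_L=11$ cannot occur.

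The remaining case $d_L=5$ ($\ell=7$, $r=1$, $e=0$) is the main obstacle, and it is exactly where genus $13$ departs from genus $14$: now $h^0(L)=2$ lies strictly below the Hartshorne bound $3$, so the pencil is no longer singled out by an equality case, and a direct geometric argument is required. I would prove that the general $D\in|L|$ is collinear in $\PP^2$. If it were not, then for a general member $\pi(D)$ would consist of five points in general position, imposing independent conditions on the six-dimensional space $H^0(2H)$; there would then be a unique conic $Q_D$ through $\pi(D)$, and since $h^0(2H-L)=1$ the residual $Q_D\cap C-D$ would be a fixed divisor, forcing $Q_D$ to be independent of $D$—impossible, as the fibres sweep out $\Gamma$. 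Hence $D$ is collinear, so $H-L$ is effective of degree $2$; and because $h^0(L)=2$ this degree-$2$ divisor must map to a single point of $\PP^2$, necessarily one of the singular points, so that $L$ is the corresponding projection pencil. The delicate points here—excluding the intermediate positions in which three or four of the five points become collinear, and pinning down $h^0(2H)=6$ (via $h^1(2H)=h^0(2H-\mathfrak{c})=4$)—are what make the degree-$5$ case substantially harder than its genus-$14$ counterpart.
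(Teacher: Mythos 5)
Your strategy coincides with the paper's: determine the singularities, push $L$ forward to $\Gamma$, and run Hartshorne's bound (Proposition \ref{prop5.1}) case by case on $d_L$. Several steps are in fact handled more cleanly than in the paper: the reduction to odd $d_L$, the purely numerical exclusion of $d_L=9$, and the use of the projection formula and relative duality to treat the borderline cases $d_L=7$ (where $\cO_\Gamma(1)\hra\pi_*L$ gives $h^0(L\otimes H^*)>0$, hence $L\simeq H$) and $d_L=11$ (where $\Hom_\Gamma(\pi_*L,\cO_\Gamma(2))\simeq H^0(2H-\mathfrak{c}-L)=0$ because $\deg(2H-\mathfrak{c}-L)=-1$). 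The paper instead rules out these extremal sheaves by observing that $\pi_*L$ cannot have the local structure of the sheaves in Proposition \ref{prop5.1}(a),(b); your degree counts are a valid and arguably more transparent substitute.

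The genuine gap is the case $d_L=5$, and you have located it correctly but not closed it. Your conic argument shows only that a general $D\in|L|$ fails to impose independent conditions on $H^0(2H)$; by the standard lemma on points and conics this means that a subscheme of $\pi(D)$ of length at least $4$ lies on a line, not that all five points do, and the configuration with exactly four collinear points is precisely what you concede you cannot exclude. So the key claim --- that every $g^1_5$ on $C$ is a projection from a singular point --- is not proved. (The paper simply asserts this, relying in effect on Coppens--Kato.) A short self-contained fix is available and avoids conics altogether: since $|L|$ is base-point-free ($d_1=5$), the base-point-free pencil trick applied to $H^0(L)\otimes H^0(H)\ra H^0(L\otimes H)$ has kernel $H^0(H\otimes L^*)$. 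If this vanished we would get $h^0(L\otimes H)\ge 6$ with $d_{L\otimes H}=12=g-1$, hence $\gamma(L\otimes H)\le 2<\Cl(C)$, a contradiction. Therefore $H\simeq L\otimes\cO_C(R)$ with $R$ effective of degree $2$; since $h^0(H\otimes\cO_C(-R))=2=h^0(H)-1$ and every section of $H$ comes from a line, the lines $\ell$ with $\pi^*\ell\ge R$ form a pencil, which forces $R$ to lie entirely over a single double point of $\Gamma$, so that $|L|=|H\otimes\cO_C(-R)|$ is the projection pencil from that point.
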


\begin{proof}
The curve $\Gamma$ has either 2 singular points, each of which is an ordinary node or cusp, or a tacnode or a second order cusp. Let $L$ be a line bundle computing $\Cl(C)$; now
$$
5 \leq d_L \leq 12.
$$
Note that $\pi_*L$ is not locally free and $d_{\pi_*L} = d_L + 2$. We apply Proposition \ref{prop5.1} to $\pi_*L$.
Our restrictions on the degree imply $r=1, e=0$ or $r=2$.

If $r=1, e=0$, we have $d_L=5$ and $h^0(\pi_*L) \leq 3$. But $\pi_*L$ does not have the form (b) of Proposition \ref{prop5.1}. Hence $h^0(\pi_*L) \leq 2$ 
and $\gamma(L) \geq 3$ with equality only if $h^0(\pi_*L)=2$. The only pencils of degree 5 are given by projection from a singular point.

If $r=2, e>3$, Proposition \ref{prop5.1} gives $h^0(\pi_*L) \leq 3$ and $6 \leq d_L \leq 8$. If $d_L = 6$, then $\pi_*L$ is not of the form (a) of Proposition \ref{prop5.1}.
So in this case $h^0(\pi_*L) \leq 2$ and $\gamma(L) \geq 4$. If $d_L =7$, then $\gamma(L) \geq 3$ with equality only if $h^0(\pi_*L) =3$. It follows 
from (a) of Proposition \ref{prop5.1} that $L \simeq H$. If $d_L = 8$, then $\gamma(L) \geq 4$.  
 
Finally, suppose $r=2, \; e \leq 3$. Then Proposition \ref{prop5.1} gives $h^0(\pi_*L) \leq 6 - e$. Moreover, $d_L = 12 -e$. So $\gamma(L) \geq 2 + e$.
If $e=0$, we note that $\pi_*L$ is not of the form (b) of Proposition \ref{prop5.1}. In this case $h^0(\pi_*L) \leq 5$ and $\gamma(L) \geq 4$. 
Otherwise we can only have $\gamma(L) = 3$ if $e = 1$ and $h^0(\pi_*L) = 5$. So $d_L = 11$ and $\pi_*L$ is not of the form (b) of Proposition \ref{prop5.1}.
\end{proof}
\begin{rem} {\em
It can be proved from Proposition \ref{prop5.1} that, if $\pi:C\to\Gamma$ is the normalization of an irreducible plane curve of degree 7 such that $C$ has genus $13$ or $14$, then $\Cl(C)=3$. In fact, this can be extended to the case when $C$ has genus $\ge9$, provided the only singularities of $\Gamma$ are ordinary nodes or cusps (see \cite[Theorem 2.3]{ck} and \cite[Corollary 2.3.1]{cm}).} 
\end{rem}

\begin{theorem} \label{thm5.5}
For a curve $C$ of genus $13$ or $14$ with $\Cl(C) = 3$, we have 
$$
\Cl_3(C) = 3.
$$ 
\end{theorem}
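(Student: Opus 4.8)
The plan is to argue by contradiction. If $\Cl_3(C)=\frac83$, there is a semistable rank-$3$ bundle $E$ with $\gamma(E)=\frac83<\Cl(C)=3$. As in Proposition \ref{prop3.3}, \cite[Proposition 6.7]{ln2} forces $d_2=7$, so $C$ is the normalization of an irreducible plane septic $\Gamma$ and Lemmas \ref{lem5.2}, \ref{lem5.3} become available: $d_1=5$, and the only line bundles of degree $\le g-1$ computing $\Cl(C)$ are $H$ and the degree-$5$ pencil(s) $P$ given by projection from the singular point(s). Propositions \ref{prop3.1}, \ref{prop4.2}, \ref{prop3.4} already dispose of the degrees $14$, $16$ and everything above $2g$, so I may assume $18\le d_E\le 2g$.

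The engine is Corollary \ref{cor2.4}, which provides a non-zero $\varphi\colon E\to P^*\otimes K_C$. As the target has rank $1$, the kernel $F:=\ker\varphi$ is a rank-$2$ subbundle and the image is a line bundle $\IM\hookrightarrow P^*\otimes K_C$. Feeding $0\to F\to E\to\IM\to 0$ and $h^0(E)\le h^0(F)+h^0(\IM)$ into the definition of $\gamma$ gives, since $\gamma(E)=\frac83$, the clean estimate
$$
\gamma(\IM)\ \le\ 3\gamma(E)-2\gamma(F)\ =\ 8-2\gamma(F).
$$
Moreover $K_C\otimes(P^*\otimes K_C)^{-1}\simeq P$, so by Serre duality the partner $K_C\otimes\IM^{-1}$ is $P\otimes\cO(D)$, where $\IM=(P^*\otimes K_C)(-D)$; in particular $\gamma(\IM)=\gamma\bigl(P\otimes\cO(D)\bigr)$ and $h^1(\IM)=h^0\bigl(P\otimes\cO(D)\bigr)\ge h^0(P)=2$.

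Next I would control $F$. By Lemma \ref{lem2.2} every line subbundle of $F$ has degree $\le 2<d_1$, so $F$ has no line subbundle with $h^0\ge 2$, and Lemma \ref{lemPR} forces $h^0(F)\le 3$. In the borderline $h^0(F)=3$ it also gives $d_F\ge d_2=7$, whence $d_F=7$ by Lemma \ref{lem2.4}, $\gamma(F)=\frac52$, and $\det F=H$ (the unique degree-$7$ bundle with $\gamma=3$). Then the estimate yields $\gamma(\IM)\le3$; as $h^0(E)\ge 6$ excludes $h^0(\IM)\le1$ and $d_E\le 2g$ excludes $h^1(\IM)\le1$, the line bundle $\IM$ genuinely contributes to $\Cl(C)$, so $\gamma(\IM)=3$ and $P\otimes\cO(D)$ \emph{computes} $\Cl(C)$. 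By Lemmas \ref{lem5.2}, \ref{lem5.3} it must be $H$ or a pencil; this pins down $D$, forces $d_E\in\{2g-2,2g\}$, and determines $\det E\in\{K_C,4H\}$ with $F\simeq D(H)$. When $h^0(F)\le2$ the bound $h^0(\IM)\le g-4$ already gives $d_E\le 2g-2$, while $h^0(\IM)\ge h^0(E)-2$ makes $\IM$ special; the same dichotomy then again drives $P\otimes\cO(D)$ to compute $\Cl(C)$ and cuts $d_E$ down to a short list.

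The genuine obstacle is the endgame: eliminating this short list, in each case of which $h^0(E)=h^0(F)+h^0(\IM)$ holds with equality, so the coboundary $H^0(\IM)\to H^1(F)$ vanishes and $H^0(E)\twoheadrightarrow H^0(\IM)$. This is exactly where Hartshorne's form of Noether's theorem is needed. The Serre-partner $P\otimes\cO(D)$ has degree $\le g-1$, so $\pi_*\bigl(P\otimes\cO(D)\bigr)$ lies in the range $\ell\le14$ of Proposition \ref{prop5.1}, and its equality clauses (a), (b) identify precisely the special divisors $D$ that realize $h^1(\IM)=h^0\bigl(P\otimes\cO(D)\bigr)$ at the value demanded above. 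I would then check that these special positions are incompatible with the semistability of $0\to F\to E\to\IM\to0$: for the extremal degrees $2g-2,\,2g$ the data $\det E\in\{K_C,4H\}$ and $F\simeq D(H)$ should force a subsheaf of $E$ violating Lemma \ref{lem2.2} or \ref{lem2.4}. I expect $g=14$, with its single pencil, to be the tightest; for $g=13$ a second pencil (when present) supplies an independent $\varphi$, and intersecting the two sets of constraints ought to close the argument. As a fallback for the intermediate degrees one can instead run the dual-span method of Propositions \ref{prop3.1}, \ref{prop4.2}: establish stability of $D(E)$ and bound $\dim\Hom(D(E),E)$ so that $\wedge^2E$ contributes to $\Cl_3(C)$ with $\gamma<\tfrac83$.
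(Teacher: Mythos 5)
Your overall strategy --- reduce to $d_E\in\{14,\,2g-2,\,2g\}$ by combining a structural subbundle of $E$ with the classification of Clifford-computing line bundles from Lemmas \ref{lem5.2} and \ref{lem5.3}, then kill each degree --- is the paper's strategy, but both of the places where you deviate from it open genuine gaps. First, the subbundle. The paper takes the exact sequence $0\ra F\ra E\ra M\ra 0$ with $\rk F=2$, $h^0(F)=3$, $d_F=7$ and $M$ or $K_C\otimes M^*$ computing $\Cl(C)$ directly from \cite[Proposition 6.7]{ln2}; you instead set $F=\ker\varphi$ for the Mercat homomorphism $\varphi\colon E\ra P^*\otimes K_C$ of Corollary \ref{cor2.4}. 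That works cleanly only in your branch $h^0(F)=3$, $d_F=7$. In the branch $h^0(F)\le2$ your inequality gives only $\gamma(\IM(\varphi))\le 8-2\gamma(F)$, and nothing forces $\gamma(\IM(\varphi))=3$; if it equals $4$ the classification of Lemmas \ref{lem5.2}/\ref{lem5.3} does not apply and the degree is not pinned down. Concretely, for $g=13$ the numerical data $d_E=20$, $h^0(E)=9$, $d_F=4$, $h^0(F)=2$, $d_{\IM(\varphi)}=16$, $h^0(\IM(\varphi))=7$, $\gamma(\IM(\varphi))=4$ satisfies every constraint you impose yet lies outside your short list. The way to exclude such configurations is to apply Lemma \ref{lemPR} to $E$ itself rather than to $\ker\varphi$: since $E$ has no line subbundle with $h^0\ge2$ (Lemma \ref{lem2.2}) and $d_E<d_{3s}$ for $s=h^0(E)-3$, $E$ must have a rank-$2$ subbundle $N$ with $h^0(N)=3$ and $d_N=7$ (Lemmas \ref{lemPR} and \ref{lem2.4}), and it is the quotient $E/N$, not $\IM(\varphi)$, whose Serre partner must compute $\Cl(C)$. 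This is precisely the content of the cited \cite[Proposition 6.7]{ln2}.

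The second and larger gap is the endgame. Having reached $d_E\in\{2g-2,2g\}$ you write ``I would then check'', ``should force'' and ``ought to close the argument''; no argument is given, and it is not at all clear that the equality clauses of Proposition \ref{prop5.1} produce a subsheaf violating Lemma \ref{lem2.2} or \ref{lem2.4}. (Note also that Hartshorne's theorem enters the paper only inside Lemmas \ref{lem5.2} and \ref{lem5.3}, which you have already consumed; it is not the tool for the extremal degrees.) The paper disposes of $d_E\ge 2g-2$ by an independent cited bound, \cite[Remark 5.10]{ln2}, which caps the degree of any bundle computing $\Cl_3(C)=\frac83$ at $g+\frac{21}{2}$, and $2g-2>g+\frac{21}{2}$ for $g\ge13$. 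Without that citation or a completed replacement for it, the proof is incomplete.
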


\begin{proof}
We have seen that, if $\Cl_3(C) < 3$, then $\Cl_3(C) = \frac{8}{3}$ and, by \cite[Proposition 6.7]{ln2}, $d_2 = 7$ and any bundle computing $\Cl_3(C)$ fits into an exact sequence
$$
0 \ra F \ra E \ra M \ra 0
$$
where $\rk F = 2, \; h^0(F) =3,\; d_F = 7$ and $M$ is a line bundle of degree $\geq 7$ such that either $M$ or $K_C \otimes M^*$ computes $\Cl(C)$. Lemmas \ref{lem5.2} 
and \ref{lem5.3} imply that $F \simeq D(H)$ and $M\simeq H, \;K_C \otimes H^*$ or $ K_C \otimes L^*$ with $L$ a pencil of degree 5.

If $M \simeq H$, then $d_E = 14$ which is impossible by Proposition \ref{prop3.1}. If $M \simeq K_C \otimes H^*$ or $K_C \otimes L^*$, then $d_E \geq 2g-2$. These cases can be eliminated since $2g-2 > g + \frac{21}{2}$ (see \cite[Remark 5.10]{ln2}). 
\end{proof}

\begin{rem} {\em
In genus 12 the arguments above allow the possibility of a line bundle of degree 11 computing $\Cl(C)$.
In fact, in this case we have $r=2, e=0$. Since $\pi_*L$ is not locally free, Proposition \ref{prop5.1} gives 
$h^0(\pi_*L) \leq 5$, but (b) does not apply. 
In addition we no longer have $2g-2 > g + \frac{21}{2}$, so we have to allow the possibility that $M\simeq K_C\otimes H^*$ in the proof of Theorem \ref{thm5.5}.}
\end{rem}

\section{Bundles with 4 or 5 sections}

In this section we consider the existence of semistable bundles $E$ of rank 3 with $4 \leq h^0(E) \leq 5$
 on a curve $C$ of Clifford index 3. We recall that in \cite{ln} we defined, for all $n \geq 1$,
$$
\gamma_n(C) = \min \left\{ \gamma(E) \; \left| \begin{array}{c}
                                                                E \; \mbox{semistable of rank}\; n \; \mbox{with}\\
                                                                h^0(E) \geq n+1 \; \mbox{and} \; \mu(E) \leq g-1
                                                               \end{array} \right. \right\}.
$$
Clearly 
$$
\gamma_n(C) \leq \Cl_n(C).
$$

We have investigated the case $n=2$ in \cite{ln3}. For $n=3$, we know from \cite[Theorem 6.1]{ln} that, if $\frac{d_2}{2} \geq \frac{d_3}{3}$, then 
\begin{equation} \label{eq6.1}
\gamma_3(C)= \min \left\{ \Cl_3(C), \frac{1}{3}(d_3 -2) \right\}
\end{equation}

Recall that $d_3 \leq 3d_1$. 

\begin{prop} \label{prop6.1}
Let $C$ be a curve with $\Cl(C) = 3$ and $d_3 = 3d_1$. Then 
$$
\gamma_3(C) = \Cl_3(C) = 3.
$$
Moreover, no bundle with $h^0 = 4$ or $5$ computes $\gamma_3(C)$.
\end{prop}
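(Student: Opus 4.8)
The plan is to extract the low-order gonality data forced by $d_3=3d_1$, then deduce the two Clifford indices from results already in hand, and finally rule out bundles with $h^0=4,5$ by a Paranjape--Ramanan argument combined with the semistability slope bound.

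First I would record the consequences of the hypothesis. Since the gonality sequence is subadditive ($d_{r+s}\le d_r+d_s$), we have $d_2\le 2d_1$ and $3d_1=d_3\le d_1+d_2$, which together force $d_2=2d_1$. As $d_1\ge\Cl(C)+2=5$, we get $d_3=3d_1\ge15$, and comparing with the universal bound $d_3\le g+3-\left[\frac g4\right]$ yields $g\ge15$. Thus $d_1\in\{5,6\}$ and $d_2=2d_1\in\{10,12\}$.

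For $\Cl_3(C)=3$ I would argue both inequalities. The bound $\Cl_3(C)\le3$ is given by $A^{\oplus3}$ for a line bundle $A$ computing $\Cl(C)$: this is semistable of rank $3$ with $h^0\ge6$, $\mu(A^{\oplus3})=d_A\le g-1$ and $\gamma(A^{\oplus3})=\gamma(A)=3$. For the reverse, note that $\gamma$ of a rank-$3$ bundle lies in $\frac13\ZZ$, so $\Cl_3(C)\ge\frac83$ leaves only $\Cl_3(C)\in\{\frac83,3\}$. If $\Cl_3(C)=\frac83$, then \cite[Proposition 6.7]{ln2} forces $d_2=7$, contradicting $d_2=2d_1\ge10$; hence $\Cl_3(C)=3$. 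Since $\frac{d_2}2=d_1=\frac{d_3}3$, the hypothesis of \eqref{eq6.1} is satisfied, so
$$
\gamma_3(C)=\min\left\{\Cl_3(C),\tfrac13(d_3-2)\right\}=\min\left\{3,d_1-\tfrac23\right\}=3,
$$
because $d_1\ge5$.

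For the final assertion, suppose $E$ is semistable of rank $3$, computes $\gamma_3(C)=3$, and has $h^0(E)\in\{4,5\}$. Solving $\gamma(E)=3$ gives $d_E=11$ if $h^0(E)=4$ and $d_E=13$ if $h^0(E)=5$, so $\mu(E)\le\frac{13}3<5$ in either case. I would then apply Lemma \ref{lemPR} with $s=h^0(E)-3\in\{1,2\}$. If $E$ had no proper subbundle $N$ with $h^0(N)>\rk N$, the lemma would give $d_E\ge d_{3s}$, impossible since $d_3=15>11$ and $d_6\ge\min\{15,g+5\}=15>13$. So such an $N$ exists; but a rank-$1$ choice has $d_N\ge d_1\ge5>\mu(E)$, and a rank-$2$ choice has $d_N\ge d_2=2d_1\ge10$ while semistability forces $d_N\le2\mu(E)\le\frac{26}3$. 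Both contradict the semistability of $E$, so no such $E$ exists.

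I expect the genuinely load-bearing step to be the elimination of $\Cl_3(C)=\frac83$: it rests on importing the structural constraint $d_2=7$ from \cite[Proposition 6.7]{ln2} and observing its incompatibility with $d_2=2d_1$. The remaining ingredients --- the subadditivity of the gonality sequence and the slope estimates feeding into Lemma \ref{lemPR} --- are routine bookkeeping.
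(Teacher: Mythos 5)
Your argument is correct, but both halves run along genuinely different lines from the paper's. For $\Cl_3(C)=3$, the paper does not invoke \cite[Proposition 6.7]{ln2} here: it notes that by Theorems \ref{thm4.6}, \ref{thm8}, \ref{thm5.5}, Proposition \ref{prop3.3} and \cite[Theorem 6.8]{ln2}, any curve of Clifford index $3$ with $\Cl_3(C)<3$ has $g\le12$, whence $d_3\le g+3-\left[\frac g4\right]\le12<15\le 3d_1$, contradicting the hypothesis. Your route --- playing $d_2=2d_1\ge10$ off against the $d_2=7$ forced by \cite[Proposition 6.7]{ln2} --- reaches the same contradiction one step earlier and is consistent with how that proposition is used in Proposition \ref{prop3.3} and Theorem \ref{thm5.5}; moreover your explicit derivation of $d_2=2d_1$ from subadditivity is needed anyway to justify applying \eqref{eq6.1}, a point the paper leaves implicit. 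For the exclusion of bundles with $h^0=4$ or $5$, the paper is shorter: Lemma \ref{lema}(iii), with $\min\left\{d_3,\frac{3d_2}2\right\}=3d_1$, shows any such contributing bundle has $d_E\ge d_3\ge15$, so $\gamma(E)\ge\frac13(d_3-4)>d_1-2\ge3$, and there is nothing left to check. Your version --- assuming $\gamma(E)=3$, computing $d_E\in\{11,13\}$, and running Lemma \ref{lemPR} against the slope bounds --- is essentially the proof of Lemma \ref{lema}(iii) unwound by hand; it works, but the step ``a rank-$2$ choice has $d_N\ge d_2$'' needs the extra sentence that either $N$ contains a line subbundle with $h^0\ge2$ (reducing to the rank-$1$ case) or Lemma \ref{lemPR} applies to $N$ itself. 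Tighten that and the proof stands on its own.
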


\begin{proof}
By \eqref{eq6.1}, $\gamma_3(C) =
\Cl_3(C)$.
Any curve $C$ of Clifford index $3$ for which $\Cl_3(C) < \Cl(C)$ has genus $g \leq 12$ by the results of earlier sections. Hence $d_3 \leq 12$. 
Since $d_1 \geq 5$, we have $d_3 < 3d_1$. This proves the first assertion. 

For the second assertion, Lemma \ref{lema}(iii) implies that any bundle $E$ contributing to $\gamma_3(C)$ has degree at least $d_3$.
So, if $h^0(E)\le5$, 
$$
\gamma(E) \geq \frac{1}{3}(d_3 - 2(h^0(E) -3)) \geq \frac{1}{3}(d_3 - 4) > d_1 -2 \geq \Cl_3(C).
$$
\end{proof}

Such curves exist; in fact, by \cite[Remark 4.5(c)]{ln}, the general pentagonal curve of genus $g\ge22$ has the properties of Proposition \ref{prop6.1}.

Suppose from now on that $d_3 < 3d_1$. If also $\frac{d_3}{3} \leq \frac{d_2}{2}$, then, by Lemma \ref{lema} (iv), there exists a semistable bundle $E$ of rank $3$ with
\begin{equation}\label{eq6.2}
d_E=d_3, h^0(E)=4,\mbox{ and hence } \gamma(E)=\frac13(d_3-2).
\end{equation}

\begin{prop} \label{prop6.3} 
Let $C$ be a curve with $\Cl(C) = 3$ such that one of the following holds:
\begin{itemize}
\item $g \leq 10$;
\item $d_2=7$ and $g\le13$;
\item $d_2\ge8$ and $d_3\le11$.
\end{itemize} 
Then the bundles given by \eqref{eq6.2} compute $\gamma_3(C)$. 
\end{prop}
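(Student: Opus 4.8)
The plan is to reduce the whole statement to the single numerical inequality $\frac13(d_3-2)\le\Cl_3(C)$. The bundle $E$ of \eqref{eq6.2} is semistable of rank $3$ with $h^0(E)=4$ and $\mu(E)=\frac{d_3}{3}\le g-1$, so it contributes to $\gamma_3(C)$ and satisfies $\gamma(E)=\frac13(d_3-2)$; hence $\gamma_3(C)\le\frac13(d_3-2)$ with no further work. Since we are in the range $\frac{d_3}{3}\le\frac{d_2}{2}$ in which these bundles exist, formula \eqref{eq6.1} applies and gives $\gamma_3(C)=\min\left\{\Cl_3(C),\frac13(d_3-2)\right\}$. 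Thus $E$ computes $\gamma_3(C)$ if and only if $\frac13(d_3-2)\le\Cl_3(C)$, i.e. $d_3\le 3\Cl_3(C)+2$, and this is the inequality I would aim to verify.

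The next step is to exploit the severe restriction on $\Cl_3(C)$. Because $\frac83\le\Cl_3(C)\le\Cl(C)=3$ and every value of $\gamma$ lies in $\frac13\ZZ$, one has $\Cl_3(C)\in\{\frac83,\,3\}$. If $\Cl_3(C)=\frac83$, then \cite[Proposition 6.7]{ln2} forces $d_2=7$, and the standing inequality $\frac{d_3}{3}\le\frac{d_2}{2}=\frac72$ then gives $d_3\le 10=3\cdot\frac83+2$, so the target inequality holds. This disposes of the case $\Cl_3(C)=\frac83$ uniformly and reduces everything to showing that, when $\Cl_3(C)=3$, one has $d_3\le 11$.

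I would then establish $d_3\le 11$ under each of the three hypotheses. When $d_2\ge 8$ and $d_3\le 11$ the bound is assumed directly (and in fact $d_2\ge 8$ already excludes $\Cl_3(C)=\frac83$ via \cite[Proposition 6.7]{ln2}, so $\Cl_3(C)=3$ here automatically). When $g\le 10$, the upper bound $d_3\le g+3-\left[\frac{g}{4}\right]$ for the gonality sequence recalled in Section 2 yields $d_3\le 11$ after checking the four values $g=7,8,9,10$. When $d_2=7$ and $g\le 13$, the existence inequality $\frac{d_3}{3}\le\frac{d_2}{2}=\frac72$ already forces $d_3\le 10$, which is more than enough; the genuine content of this case is to see why the cutoff must be $g\le 13$.

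The main obstacle is precisely this last, plane-septic, case. A curve with $d_2=7$ and $\Cl(C)=3$ is the normalization of an irreducible plane curve of degree $7$, so Proposition \ref{prop5.1} controls $\pi_*L$ for any line bundle $L$ with $h^0(L)\ge 4$. On the one hand, a suitable conic (case (b) of Proposition \ref{prop5.1} with $r=2$) should produce a line bundle of degree $\le 10$ with $h^0=4$ when $g\le 13$, so that $d_3\le 10$ and the existence condition $\frac{d_3}{3}\le\frac72$ is valid; on the other hand, the lower bound coming from Proposition \ref{prop5.1} gives $d_3\ge g-3\ge 11$ when $g=14$, so that $\frac{d_3}{3}>\frac{d_2}{2}$ and the bundles of \eqref{eq6.2} no longer exist, which is exactly why the hypothesis must stop at $g\le 13$. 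Controlling the non-local-freeness of $\pi_*L$ at the singular point(s) of the septic, in the style of Lemmas \ref{lem5.2} and \ref{lem5.3}, is the delicate part of the argument.
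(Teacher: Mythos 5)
Your overall architecture is the same as the paper's: everything reduces to the two inequalities $\frac{d_3}{3}\le\frac{d_2}{2}$ (so that the bundles of \eqref{eq6.2} exist) and $\frac13(d_3-2)\le\Cl_3(C)$, and these are checked case by case, with the observation that $d_3\le10$ settles both at once since $d_2\ge7$ and $\Cl_3(C)\ge\frac83$. Your explicit enumeration $\Cl_3(C)\in\{\frac83,3\}$ together with \cite[Proposition 6.7]{ln2} is a slightly cleaner way of organizing the second inequality than the paper's direct verification, and your treatment of the cases $g\le 9$ and $d_2\ge8$, $d_3\le11$ is complete and matches the paper.

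There is, however, a genuine gap in the case $d_2=7$, $g\le13$, which is the substantive case. You first derive $d_3\le10$ from ``the existence inequality $\frac{d_3}{3}\le\frac{d_2}{2}=\frac72$'', but that inequality is part of what has to be proved (the paper's displayed goal \eqref{eq6.3} includes it); using it as a hypothesis to conclude $d_3\le10$ is circular. You then recognize this in your last paragraph and correctly locate the real task --- producing a line bundle of degree $\le10$ with $h^0\ge4$ --- but leave it at ``a suitable conic \ldots should produce'' such a bundle, flagged as the delicate part. Note also that Proposition \ref{prop5.1} only gives upper bounds on $h^0$, so it cannot by itself furnish the required existence statement. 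The paper's argument is a direct geometric count: since the arithmetic genus of an irreducible plane septic is $15$ and $g\le13$, the singular locus of $\Gamma$ consists of at least two double points, or one double point worse than an ordinary node or cusp; the conics passing through these points with total intersection multiplicity $\ge4$ with $\Gamma$ form a family of dimension $\ge3$, and pulling back to $C$ yields a $g^3_{10}$, whence $d_3\le14-4=10$. This same point closes the residual gap in your $g=10$ case: you only obtain $d_3\le11$ from the gonality bound there, which does not give $\frac{d_3}{3}\le\frac{d_2}{2}$ when $d_2=7$; the paper covers $g=10$, $d_2=7$ by the septic argument and $g=10$, $d_2\ge8$ by the third bullet.
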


\begin{proof}
We require to prove that
\begin{equation}\label{eq6.3}
\frac{d_3}3\le\frac{d_2}2 \mbox{ and }\frac{1}{3}(d_3-2) \leq \Cl_3(C).
\end{equation}
Since $d_2\ge7$ and $\Cl_3(C)\ge\frac83$, this is clear if $d_3\le10$. This holds if $g\le9$. If $d_2=7$ and $g\le13$, we consider the plane model $\Gamma$ of $C$. This has 
either $2$ or more double points or one double point, which is neither an ordinary node nor a cusp. It follows that there is a $3$-dimensional family of conics passing 
through one or two singular points with total multiplicity at these points $\ge4$. Hence $d_3\le14-4=10$ and the previous argument works. If $d_2\ge8$ and $d_3\le11$, 
then \eqref{eq6.3} still holds. Finally, note that, if $g=10$, then $d_3\le11$, so this is covered by one of the other cases.
\end{proof}

\begin{rem} {\em
If $C$ is a curve of Clifford dimension 3, i.e. a smooth intersection of 2 cubics in $\PP^3$, then
$\Cl_3(C) = \Cl(C) = 3$ and $d_3 = 9$. So again the bundles given by \eqref{eq6.2}
compute $\gamma_3(C)$. }
\end{rem}

In the case $\frac{d_3}{3} > \frac{d_2}{2}$ we have the following proposition which extends \cite[Proposition 3.5]{ln1}.

\begin{prop} \label{p6.5}
If $\frac{d_3}{3} > \frac{d_2}{2}$, then the minimal degree for which there exists a semistable bundle $E$ of rank $3$ 
with $h^0(E) = 4$ is $[\frac{3d_2 + 1}{2}]$. So 
$$
\gamma(E) = \frac{1}{3} \left( \left[ \frac{3d_2 + 1}{2} \right] - 2 \right) \leq \frac{1}{3}(d_3-2).
$$
\end{prop}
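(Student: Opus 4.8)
The plan is to prove the statement in two halves: a lower bound showing that every semistable rank-$3$ bundle $E$ with $h^0(E)=4$ has $d_E\ge[\frac{3d_2+1}2]$, and an existence statement realizing this degree. Since $h^0(E)=4=3+1$, one has $\gamma(E)=\frac13(d_E-2)$, so the displayed formula for $\gamma(E)$ is immediate once the minimal degree is identified, and the final inequality $\frac13([\frac{3d_2+1}2]-2)\le\frac13(d_3-2)$ reduces to $[\frac{3d_2+1}2]\le d_3$. This last inequality I would deduce directly from the hypothesis $\frac{d_3}3>\frac{d_2}2$ together with the integrality of $d_3$: examining the two parities of $d_2$ shows that $d_3>\frac{3d_2}2$ forces $d_3\ge[\frac{3d_2+1}2]$.

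For the lower bound I would start from Lemma \ref{lemPR} with $n=3$, $s=1$. If $E$ has no proper subbundle $N$ with $h^0(N)>\rk N$, then $d_E\ge d_3\ge[\frac{3d_2+1}2]$ and we are done. Otherwise fix such an $N$ and argue according to its rank. If $\rk N=1$, then $h^0(N)\ge2$ gives $d_N\ge d_1$, and semistability of $E$ gives $d_E\ge3d_1>d_3\ge[\frac{3d_2+1}2]$ (using the standing hypothesis $d_3<3d_1$). If $\rk N=2$ and $N$ is semistable, then $h^0(N)\ge3$ forces $d_N\ge d_2$ by Lemma \ref{lema}(i), and semistability of $E$ gives $d_E\ge\frac{3d_2}2$; since $d_E$ is an integer this yields $d_E\ge[\frac{3d_2+1}2]$, the crucial tight case. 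If $\rk N=2$ is not semistable, I would pass to the maximal destabilizing line subbundle $N_1$ and its quotient $N_2$; since $h^0(N_1)+h^0(N_2)\ge h^0(N)\ge3$, one of them has $h^0\ge2$, and in either case a short computation (using $d_{N_1}>d_{N_2}$ in the second) gives $d_E>3d_1$, again $\ge[\frac{3d_2+1}2]$.

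For existence I would build $E$ from a semistable rank-$2$ bundle $F$ of degree $d_2$ with $h^0(F)=3$, which exists by Lemma \ref{lema}(ii) since the hypotheses give $\frac{3d_2}2<d_3<3d_1$, hence $d_2<2d_1$. Let $M$ be a line bundle of degree $[\frac{d_2+1}2]$ with $h^0(M)=1$ (a general effective divisor of that degree). If $d_2$ is even, then $\mu(F)=\mu(M)=\frac{d_2}2$, so $E:=F\oplus M$ is semistable of degree $\frac{3d_2}2=[\frac{3d_2+1}2]$ with $h^0(E)=4$. If $d_2$ is odd the slopes no longer coincide, and I would instead take a non-split extension $0\to F\to E\to M\to0$; a subbundle analysis (separating, by rank, the cases $G\subseteq F$ and $G\not\subseteq F$, and using that $F$ is semistable and $M$ a line bundle) shows that any such extension is semistable, and I would choose the extension class to lie in the image of $\Ext^1(\cO_D,F)\to\Ext^1(M,F)$ so that the unique section of $M$ lifts, giving $h^0(E)=h^0(F)+1=4$ while keeping the extension non-split.

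The main obstacle is the existence part in the odd case: one must simultaneously arrange $h^0(E)=4$ \emph{exactly} --- neither fewer sections (so the section of $M$ must lift, which is controlled by the connecting map $\delta\colon H^0(M)\to H^1(F)$) nor more (so $M$ must genuinely have $h^0(M)=1$) --- while preserving semistability by keeping the extension non-split. Checking that the relevant piece of $\Ext^1(M,F)$ is nonzero, via the sequence $0\to\cO\to M\to\cO_D\to0$ and the vanishing of $\Ext^2$ on a curve, is where the genuine work lies; the lower bound, by contrast, is a routine case analysis.
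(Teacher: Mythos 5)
Your argument is correct, and the two-part outline (sharp lower bound plus realization of that degree) is the same as the paper's; the difference is that the paper disposes of both parts by citation, while you reconstruct them. For the lower bound the paper simply invokes Lemma \ref{lema}(iii): under the hypothesis $\frac{d_3}{3}>\frac{d_2}{2}$ one has $\min\{d_3,\frac{3d_2}{2}\}=\frac{3d_2}{2}$, so any semistable rank-$3$ bundle of degree $<\frac{3d_2}{2}$ has $h^0\le3$, and integrality gives $d_E\ge[\frac{3d_2+1}{2}]$. Your case analysis via Lemma \ref{lemPR} and destabilizing subbundles is in substance a proof of that lemma (it is \cite[Proposition 4.11]{ln}), and it is carried out correctly; note that in the rank-$1$ case you only need $3d_1\ge d_3$, which holds unconditionally, so the standing hypothesis $d_3<3d_1$ is not really used there. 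For existence the paper cites \cite[Proposition 3.5]{ln1}; your extension construction $0\to F\to E\to M\to 0$ with $d_F=d_2$, $h^0(F)=3$, $d_M=[\frac{d_2+1}{2}]$, $h^0(M)=1$ is a sound replacement. The one point you flag as the genuine work — producing a \emph{non-split} class in $\ker\bigl(\Ext^1(M,F)\to H^1(F)\bigr)$ — does go through: that kernel is the image of $\Ext^1(\cO_D,F)$, which has dimension $2d_M>h^0(F)=3$, so the kernel is non-zero; and your subbundle check that non-splitness forces semistability (indeed stability, since $\mu(F)<\mu(E)$) is correct. What your version buys is self-containedness at the cost of length; what the paper's buys is brevity at the cost of sending the reader to two external results.
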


\begin{proof}
If $E$ is a semistable bundle of rank $3$ with $d_E < \frac{3d_2}{2}$, then $h^0(E) \leq 3$ by Lemma \ref{lema}(iii). 
The existence of $E$ was proved in \cite[Proposition 3.5]{ln1}. 
\end{proof}

\begin{cor} \label{cor6.6}
If $\Cl(C) = 3$ and $\frac{d_3}{3} > \frac{d_2}{2}$, then $\gamma(E) \geq 3$ with equality if and only if 
$d_2 = 7$.
\end{cor}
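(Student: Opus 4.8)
The plan is to deduce the corollary directly from the formula for $\gamma(E)$ obtained in Proposition \ref{p6.5}, reducing the entire statement to the arithmetic of the integer part once the lower bound $d_2 \geq 7$ forced by $\Cl(C) = 3$ is in hand. Here $E$ denotes the minimal-degree semistable rank-$3$ bundle with $h^0(E) = 4$ supplied by that proposition, so that under the standing hypothesis $\frac{d_3}{3} > \frac{d_2}{2}$ we may take as given
$$
\gamma(E) = \frac{1}{3}\left(\left[\frac{3d_2+1}{2}\right] - 2\right).
$$

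First I would pin down the range of $d_2$. Since $\Cl(C) = 3$ we have $3 = \Cl(C) \leq \left[\frac{g-1}{2}\right]$, hence $g \geq 7$; the gonality bound then gives $d_2 \geq \min\{\Cl(C)+4,\,g+1\} = \min\{7,\,g+1\} = 7$. Thus $d_2 \geq 7$, and the whole question becomes one of tracking how the displayed right-hand side grows with $d_2$.

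Next I would evaluate at the boundary value and invoke monotonicity. For $d_2 = 7$ one computes $\left[\frac{22}{2}\right] = 11$, so $\gamma(E) = \frac{1}{3}(11-2) = 3$. Since $d_2 \mapsto \left[\frac{3d_2+1}{2}\right]$ is nondecreasing and already takes the value $\left[\frac{25}{2}\right] = 12$ at $d_2 = 8$, every $d_2 \geq 8$ yields $\gamma(E) \geq \frac{1}{3}(12-2) = \frac{10}{3} > 3$. Combining the two cases gives $\gamma(E) \geq 3$ with equality precisely when $d_2 = 7$, as claimed.

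I do not expect a genuine obstacle here: all the substance is contained in Proposition \ref{p6.5}, and the corollary is a single substitution followed by elementary estimates on the floor function. The only points requiring a moment's care are the derivation of $d_2 \geq 7$ from $\Cl(C) = 3$ (immediate from the facts recorded in Section 2) and the observation that $d_2 = 7$ is the unique minimizer, which follows from the monotonicity of the integer part together with the strict jump between $d_2 = 7$ and $d_2 = 8$.
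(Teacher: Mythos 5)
Your proof is correct and follows exactly the paper's route: the paper's own proof is the single line ``Since $d_2\ge7$, this follows at once from the proposition,'' and you have simply made explicit the derivation of $d_2\ge7$ from $\Cl(C)=3$ and the evaluation of $\frac{1}{3}\left(\left[\frac{3d_2+1}{2}\right]-2\right)$ at $d_2=7$ versus $d_2\ge8$. No difference in substance, and the arithmetic checks out.
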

\begin{proof}
Since $d_2\ge7$, this follows at once from the proposition.
\end{proof}

\begin{prop} \label{prop6.7}
If $C$ is either a smooth plane septic or the normalization  of a plane septic with one ordinary node or cusp and $E$ is as in Proposition \ref{p6.5}, then $\gamma(E) = 3$.
\end{prop}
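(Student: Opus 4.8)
The plan is to reduce the entire statement to Corollary \ref{cor6.6}. Since $E$ is assumed to be as in Proposition \ref{p6.5}, we are automatically in the regime $\frac{d_3}{3}>\frac{d_2}{2}$, and together with $\Cl(C)=3$ this is exactly the hypothesis of Corollary \ref{cor6.6}. That corollary already gives $\gamma(E)\ge3$, with equality if and only if $d_2=7$. Hence the whole content of the proposition collapses to the single numerical claim that $d_2=7$ for each of the two types of curve, and the proof will consist of establishing this by squeezing $d_2$ between two bounds.

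For the lower bound I would invoke the gonality estimate $d_r\ge\min\{\Cl(C)+2r,\,g+r-1\}$ with $r=2$ and $\Cl(C)=3$, which yields $d_2\ge\min\{7,\,g+1\}$. A smooth plane septic has genus $g=\binom{6}{2}=15$, and the normalization of a plane septic with one ordinary node or cusp has genus $g=14$ (one node or cusp drops the geometric genus by one from the arithmetic genus $15$); in both cases $g+1\ge8>7$, so $d_2\ge7$.

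For the upper bound I would use the hyperplane bundle. On the normalization $\pi:C\to\Gamma$ (the identity in the smooth case), the line bundle $H=\pi^*\cO_\Gamma(1)$ has degree $7$ because $\pi$ is birational, and $h^0(H)\ge h^0(\Gamma,\cO_\Gamma(1))=3$ since the three coordinate forms restrict to independent sections on $\Gamma$ and pull back to independent sections on $C$. Thus there is a line bundle of degree $7$ with $h^0\ge3$, giving $d_2\le7$. Combining the two bounds gives $d_2=7$, and Corollary \ref{cor6.6} then delivers $\gamma(E)=3$.

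There is essentially no deep obstacle: the argument is a two-line squeeze on $d_2$ followed by the appeal to Corollary \ref{cor6.6}. The only points requiring any care are the genus computations via the plane-curve genus formula and the verification that $\pi^*\cO_\Gamma(1)$ really is a degree-$7$ line bundle on $C$ with at least three sections, both of which are routine. It is worth emphasizing that the hypothesis ``$E$ is as in Proposition \ref{p6.5}'' is doing genuine work, since it is what guarantees the case $\frac{d_3}{3}>\frac{d_2}{2}$ in which Corollary \ref{cor6.6} applies; if one wanted the statement to be non-vacuous one could separately check $d_3\ge11$ (so that $\frac{d_3}{3}>\frac{7}{2}$) for these curves, but this is not needed for the implication actually being proved.
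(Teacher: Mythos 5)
Your argument is correct and follows the same route as the paper: everything reduces to Corollary \ref{cor6.6} once one knows $d_2=7$, and your squeeze on $d_2$ (the gonality bound $d_2\ge\min\{\Cl(C)+4,g+1\}$ from below, the degree-$7$ hyperplane bundle with $h^0\ge3$ from above) is exactly the content of the paper's ``clearly $d_2=7$''. The one substantive step of the paper's proof that you defer is the verification that these particular curves really do satisfy $\frac{d_3}{3}>\frac{d_2}{2}$: the paper gets this from Proposition \ref{prop5.1} (Hartshorne's form of Noether's theorem), which gives $d_3=11$ or $12$, so that Proposition \ref{p6.5} and Corollary \ref{cor6.6} genuinely apply and the statement is non-vacuous. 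You are right that this is not logically required for the implication as literally phrased, but since the whole point of the proposition is to exhibit curves where the equality case of Corollary \ref{cor6.6} actually occurs, that check is part of what the paper is proving and should be supplied. (The paper's appeal to Theorem \ref{thm5.5} for $\Cl_3(C)=3$ is contextual and, as in your write-up, not needed for the bare equality $\gamma(E)=3$.)
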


\begin{proof}
Clearly $d_2 = 7$ and $\Cl(C) = 3$. Moreover, $g = 14$ or $15$. So, by Theorem \ref{thm5.5},
$\Cl_3(C) = 3$. By Proposition \ref{prop5.1}, we have $d_3 = 11$ or $12$. So Corollary \ref{cor6.6} gives the result.
\end{proof}

\begin{rem}\label{rem6.2}
{\em The only remaining cases are when $d_2\ge8$ and $d_3\ge12$. In these cases, it follows either from Lemma \ref{lema}(3) or Corollary \ref{cor6.6} 
that there are no bundles with $h^0=4$ computing $\gamma_3(C)$.}
\end{rem}

We consider now the case $h^0(E) = 5$. In view of Proposition \ref{prop6.1}, we can assume that $d_3 < 3d_1$.

\begin{prop} \label{prop7.8}
Suppose $\frac{d_3}{3}\leq \frac{d_2}{2}$. Let $E$ be a semistable bundle of rank $3$ with $h^0(E) =5$ such that $\gamma(E) = \gamma_3(C)$. Then there exists a non-trivial extension
\begin{equation} \label{e7.4}
0 \ra F \ra E \ra N \ra 0
\end{equation}
with $F$ computing $\gamma_2(C), \; d_F = d_2$ and $d_N = d_1$. Moreover, one of the following possibilities holds.
\begin{enumerate}
 \item[I.] $d_1 = 5, \;d_2 = 8, \; d_3 = 11$ or $12, \; \gamma(E) = 3$;
\item[II.] $d_1 = 5, \; d_2 = 7,\;  d_3 = 10, \; \gamma(E) = \frac{8}{3}$.
\end{enumerate}
\end{prop}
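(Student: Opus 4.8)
The plan is to manufacture the rank-$2$ subbundle $F$ by a Paranjape--Ramanan argument and then to pin down every degree using semistability of $E$ together with the standing hypothesis $\frac{d_3}{3}\le\frac{d_2}{2}$.

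First I would bound $d_E$. Since $\gamma(E)=\gamma_3(C)$ and, by \eqref{eq6.1}, $\gamma_3(C)\le\frac13(d_3-2)$, the identity $\gamma(E)=\frac13(d_E-4)$ gives $d_E\le d_3+2$. The gonality sequence is strictly increasing, so $d_6\ge d_3+3>d_E$; hence Lemma \ref{lemPR}, applied with $n=3$ and $s=2$ (so that $ns=6$), shows that $E$ has a proper subbundle $N'$ with $h^0(N')>\rk N'$. Now $\mu(E)=\gamma_3(C)+\frac43\le3+\frac43<5\le d_1$, so by semistability of $E$ no line subbundle can have $h^0\ge2$; therefore $N'$ has rank $2$ and $h^0(N')\ge3$. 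Passing to its saturation yields a rank-$2$ subbundle $F\subset E$ with $h^0(F)\ge3$ and a line-bundle quotient $N=E/F$.

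Next I would determine the shape of $F$. The inequality $\mu(E)<d_1$ forces $F$ to be semistable: a destabilizing line subbundle $M$ would satisfy $d_M\le\mu(E)<d_1$, and the quotient $F/M$ would have degree $<\mu(F)\le\mu(E)<d_1$, so both would have $h^0\le1$ and hence $h^0(F)\le2$, a contradiction. Semistability gives $d_F\ge d_2$ by Lemma \ref{lema}(i), while $d_F\le2\mu(E)\le\frac{26}{3}$ forces $d_F\le8$. If $h^0(F)\ge4$ then $\gamma(F)\le\frac12(d_F-4)\le2$, contradicting $\gamma(F)\ge\gamma_2(C)\ge\frac52$ (every rank-$2$ bundle contributing to $\gamma_2(C)$ has either $h^0=3$ and degree $\ge d_2\ge7$, or $h^0\ge4$ and $\gamma\ge\Cl_2(C)=3$). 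Hence $h^0(F)=3$, and from $5=h^0(E)\le h^0(F)+h^0(N)$ we get $h^0(N)\ge2$, so $d_N\ge d_1$.

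Finally the bookkeeping. From $d_F\ge d_2\ge7$ and $d_N\ge d_1\ge5$ we get $d_E\ge12$, while $\gamma_3(C)\le\Cl_3(C)\le\Cl(C)=3$ gives $d_E\le13$; thus $d_E\in\{12,13\}$. For $d_E=13$ we have $\gamma_3(C)=3$, which forces $\frac13(d_3-2)\ge3$, i.e. $d_3\ge11$; since $d_3\le\frac{3d_2}{2}$, the value $d_2=7$ is impossible, so $d_2=8$, and then $d_F\le8$ leaves only $(d_F,d_N)=(8,5)$, giving $d_1=5$, $d_2=8$, $d_3\in\{11,12\}$ (case I). For $d_E=12$ we have $\gamma_3(C)=\frac83$, and $d_F=8$ would force $d_N=4<d_1$, so $(d_F,d_N)=(7,5)$, giving $d_1=5$, $d_2=7$; then $d_3\le10$ together with $\frac13(d_3-2)\le\frac83\le\Cl_3(C)$ gives $\gamma_3(C)=\frac13(d_3-2)=\frac83$, hence $d_3=10$ (case II). In both cases $d_F=d_2$ with $h^0(F)=3$ yields $\gamma(F)=\frac12(d_2-2)=\gamma_2(C)$ (the equality following from Lemma \ref{lema}(ii), as $d_2\ne2d_1$), so $F$ computes $\gamma_2(C)$, and $d_N=d_1$; the extension is non-split since $\mu(N)=d_1>\mu(E)$ would otherwise contradict semistability of $E$. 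The main obstacle is exactly this last passage from the inequalities $d_F\ge d_2$, $d_N\ge d_1$ to the two discrete cases: it is the hypothesis $\frac{d_3}{3}\le\frac{d_2}{2}$, in the guise of $d_3\le\frac{3d_2}{2}$, that rules out the spurious possibility $d_2=7$, $d_N=d_1=6$ at $d_E=13$, everything before it being routine once Lemma \ref{lemPR} and $\mu(E)<d_1$ are in hand.
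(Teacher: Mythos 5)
Your proof is correct and follows essentially the same route as the paper's: Lemma \ref{lemPR} forces a rank-$2$ subbundle $F$ with $h^0(F)=3$ (line subbundles with $h^0\ge2$ and the possibility $h^0(F)\ge4$ being excluded by slope and Clifford-index considerations), after which semistability of $E$ and the hypothesis $d_3\le\frac{3d_2}{2}$ pin down all the degrees. The only differences are cosmetic --- you organize the final case analysis by $d_E\in\{12,13\}$ rather than by the values of $(d_1,d_2)$, and you explicitly verify that $F$ computes $\gamma_2(C)$ and that the extension is non-split, two points the paper's proof leaves implicit.
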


\begin{proof}
If $E$ has a line subbundle with $h^0 \geq 2$, then, by \cite[Proposition 4.25(c)]{ln},
$$
\gamma(E) > \Cl(C).
$$
If $E$ has no proper subbundle with $h^0 >\rk$, then, by Lemma \ref{lemPR}, $d_E \geq d_6$ and hence
\begin{equation} \label{e7.5}
\gamma(E) \geq \frac{1}{3}(d_6 -4) > \frac{1}{3}(d_3-2). 
\end{equation}
So we may assume that $E$ has no line subbundle with $h^0 \geq 2$, but has a subbundle $F$ of rank 2 with $h^0(F) \geq 3$.

If $h^0(F) \geq 4$, then $d_F \geq d_4$ by Lemma \ref{lemPR}. By semistability of $E$, we have
$d_E \geq \frac{3}{2} d_4 \geq 15$, since $d_4 \geq \min \{ \Cl(C) + 8, g+3 \} \geq 10$. So
$\gamma(E) \geq \frac{11}{3} > 3$.
We can therefore suppose that $h^0(F) = 3$ and write $N = E/F$, so that $h^0(N) \geq 2$. Note first that $d_F \geq d_2$ by Lemma \ref{lemPR} and $d_N \geq d_1$.

If $d_1 = 5$ and $d_2 \geq 8$, then $\Cl_3(C) = \Cl(C) = 3$ and 
$$
\gamma(E) \geq \frac{1}{3}(13 -4) = 3
$$ 
with equality if and only if $d_F = d_2 = 8$ and $d_N = d_1$. So for $\gamma(E) = \gamma_3(C)$ we need also $3 \leq \frac{1}{3}(d_3 -2)$, i.e. $d_3 \geq 11$. 
Also $\frac{d_3}{3} \leq \frac{d_2}{2} = 4$, so $d_3 \leq 12$. This gives case I.

If $d_1 = 5$ and $d_2 = 7$, then $d_E \geq 12$. So
$$
\gamma(E) \geq \frac{1}{3}(12 - 4) = \frac{8}{3}.
$$
For equality we need $d_E = 12$ and hence $d_F = 7$ and $d_N = 5$. For $E$ to compute $\gamma_3(C)$ we need also $\frac{1}{3}(d_3 -2) \geq \frac{8}{3}$, i.e. $d_3 \geq 10$.
Since also $\frac{d_3}{3} \leq \frac{d_2}{2}$, this gives $d_3 = 10$. On the other hand, if $\gamma(E) = 3$, then $d_E = 13$. For this to compute $\gamma_3(C)$ we need $\Cl_3(C) = 3$ and $\frac{1}{3}(d_3 - 2) \geq 3$, i.e. 
$d_3 \geq 11$. But then $\frac{d_3}{3} > \frac{d_2}{2}$. So this does not occur.

If $d_1 = 6$, then $C$ is either a smooth plane septic or a smooth intersection of 2 cubics in $\PP^3$. In the first case $d_2 = 7,\; d_3 = 12$. So $\frac{d_3}{3} > \frac{d_2}{2}$,
contradicting the hypothesis. In the second case, we have $d_2 = 8$. So $d_E \geq d_1 + d_2 = 14$, i.e.
$\gamma(E) \geq \frac{1}{3}(14 - 4) = \frac{10}{3} > 3.$
\end{proof}

\begin{prop} \label{prop7.9}
Suppose $\frac{d_3}{3} > \frac{d_2}{2}$. Let $E$ be a semistable bundle of rank $3$ with $h^0(E) = 5$ such that $\gamma(E) = \gamma_3(C)$. Then there exists a 
non-trivial extension \eqref{e7.4}
with $\rk F = 2, \; h^0(F) = 3$ and a line bundle $N$ with $h^0(N) = 2$. Moreover, one of the following possibilities holds
\begin{enumerate}
 \item[I.] $d_F = d_2 =8,\; d_N = d_1 =5,\; \gamma(E) = 3$.
\item[II.] $g = 14, \; d_1 = 5, \; d_2 = 7, \; d_3 = 11$ and $d_F = 7,\; d_N = 5, \; \gamma(E) = \frac{8}{3}$ or $d_F = 7, \; d_N = 6, \; \gamma(E) = 3$ 
or $d_F = 8, \; d_N = 5, \; \gamma(E) = 3$.
\item[III.] $C$ is a smooth plane septic, $d_F = d_2 = 7, \; d_N = d_1 = 6, \; \gamma(E) = 3$.   
\end{enumerate}
 
\end{prop}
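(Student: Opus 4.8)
The plan is to mirror the proof of Proposition \ref{prop7.8}, changing only the arithmetic to suit the regime $\frac{d_3}{3}>\frac{d_2}{2}$. The two facts I would use at the outset are $\gamma_3(C)\le\Cl_3(C)\le\Cl(C)=3$ (the last inequality coming from $L^{\oplus3}$, where $L$ computes $\Cl(C)$), so that $\gamma(E)=\gamma_3(C)\le3$, and $\gamma_3(C)\le\frac13(d_3-2)$, which in this regime follows from the bundle produced in Proposition \ref{p6.5}.

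First I would pin down the subbundle structure exactly as in Proposition \ref{prop7.8}. A line subbundle with $h^0\ge2$ would force $\gamma(E)>3$ by \cite[Proposition 4.25(c)]{ln}, contradicting $\gamma(E)\le3$; and if $E$ had no proper subbundle $N$ with $h^0(N)>\rk N$, then Lemma \ref{lemPR} with $s=2$ would give $d_E\ge d_6\ge d_3+3$, whence $\gamma(E)\ge\frac13(d_6-4)>\frac13(d_3-2)\ge\gamma_3(C)$, again impossible. So $E$ has a rank-$2$ subbundle $F$ with $h^0(F)\ge3$ and no line subbundle with $h^0\ge2$. If $h^0(F)\ge4$ then Lemma \ref{lemPR} gives $d_F\ge d_4\ge10$, so semistability forces $d_E\ge\frac32 d_4\ge15$ and $\gamma(E)\ge\frac{11}3>3$; hence $h^0(F)=3$. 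Writing $N=E/F$, the cohomology sequence gives $h^0(N)\ge2$, while $d_F\ge d_2$ (Lemma \ref{lemPR}) and $d_N\ge d_1$; and $h^0(N)\ge3$ would give $d_N\ge d_2$, hence $d_E\ge2d_2\ge14$ and $\gamma(E)\ge\frac{10}3>3$, so $h^0(N)=2$. Semistability also rules out a splitting (each case below has $d_N>\frac{d_E}3$), so the extension \eqref{e7.4} is non-trivial, as required.

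Next I would carry out the case analysis on $d_1\in\{5,6\}$, using $d_E=d_F+d_N$ and $\gamma(E)=\frac13(d_E-4)\le3$, i.e. $d_E\le13$. If $d_1=5$ and $d_2\ge8$, then $d_E\ge13$ forces $d_E=13$, $d_F=d_2=8$, $d_N=d_1=5$, $\gamma(E)=3$ (Case I). If $d_1=5$ and $d_2=7$, then $C$ is the normalization of a plane septic, necessarily singular since $d_1<d_2$, and the only admissible degrees are $d_E=12$ (giving $d_F=7$, $d_N=5$, $\gamma=\frac83$) and $d_E=13$ (giving $(d_F,d_N)=(7,6)$ or $(8,5)$, $\gamma=3$). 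If $d_1=6$, the regime hypothesis excludes the smooth intersection of two cubics (which has $d_2=8$, $d_3=9$, so $\frac{d_3}3\le\frac{d_2}2$), leaving the smooth plane septic with $d_2=7$, $d_3=12$; then $d_E\ge13$ forces $d_F=d_2=7$, $d_N=d_1=6$, $\gamma(E)=3$ (Case III).

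The step I expect to be the main obstacle is fixing $g=14$ and $d_3=11$ in Case II. For this I would draw on the plane-curve analysis already in place: $d_2=7$ together with $\frac{d_3}3>\frac{d_2}2$ forces $d_3\ge11$; the computation in the proof of Proposition \ref{prop6.3} shows that a plane septic of genus $\le13$ with $d_2=7$ has $d_3\le10$, and the smooth septic ($g=15$) has $d_1=6$. This isolates the one-node/cusp case $g=14$, for which Proposition \ref{prop5.1} (used as in Proposition \ref{prop6.7}) gives $d_3=11$. Collecting these constraints produces Case II and finishes the argument.
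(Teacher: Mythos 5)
Your proposal is correct and follows essentially the same route as the paper's proof: the same reduction via Lemma \ref{lemPR} and the exclusion of line subbundles with $h^0\ge2$ to the situation $h^0(F)=3$, $h^0(N)=2$, followed by the same arithmetic on $d_F\ge d_2$, $d_N\ge d_1$, $d_E\le 13$, with the plane-septic classification isolating cases II and III. The only differences are organizational (you split on $d_1$ where the paper splits on $d_2$) and that you make explicit two points the paper leaves implicit, namely the non-triviality of the extension and the derivation of $g=14$, $d_3=11$ from Propositions \ref{prop6.3} and \ref{prop5.1}.
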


\begin{proof} Note that, since $d_2 \geq 7$, the hypothesis implies that $d_3 \geq 11$.
If $E$ has no proper subbundle with $h^0 > $ rk, then Lemma \ref{lemPR} implies that 
$$
d_E \geq d_6 \geq d_3 + 3 \geq 14,
$$
giving
\begin{equation} \label{eq7.5} 
\gamma(E) \geq \frac{1}{3}(14-4) = \frac{10}{3} >3.
\end{equation}

If $E$ has a line subbundle $F$ with $h^0(F) \geq 2$, then $d_F \geq d_1$ and by semistability $d_E \geq 15$. So again $\gamma(E) > 3$.
Hence $E$ has a subbundle $F$ of rank 2 with $h^0(F) \geq 3$ and no line subbundle with $h^0 \geq 2$. If $h^0(F) \geq 4$, then $d_F \geq d_4$ by Lemma \ref{lemPR}.
So $d_E \geq \frac{3}{2}(d_2 +2) \geq \frac{27}{2}$ and $\gamma(E) > 3$.

We are left with the case $h^0(F) = 3$. So $d_F \geq d_2$ by Lemma \ref{lemPR} and, writing $N = E/F$, we have $h^0(N) \geq 2$. 
If $h^0(N) \geq 3$, then $d_N \geq d_2$ and $d_E \geq 14$. So again \eqref{eq7.5} holds.
If $h^0(N) = 2$, then 
$$
d_N \geq d_1  \quad \mbox{and} \quad d_E \geq d_1 + d_2.
$$
If $d_2 \geq 8$, then  $d_E \geq 13$ and $\gamma(E) \geq 3$ with equality if and only if $d_F = d_2 =8, \; d_N = d_1 = 5$. This gives case I.

If $d_2 = 7$, then $C$ admits a plane model of degree 7. The only cases in which $\frac{d_3}{3} > \frac{d_2}{2}$ are when $g = 14$ and the plane model has one ordinary node or cusp 
and when $C$ is a smooth plane septic. In the first case $d_3 = 11$. This gives case II.

When $C$ is a smooth plane septic, $d_1 = 6, \;d_2 = 7$ and $d_3 = 12$. So $d_E \geq 13$ with equality if and only if $d_F = d_2 = 7$ and $d_N = d_1 = 6$. This gives case III.
\end{proof}

\begin{rem}  \label{rem7.10} {\em
We can vary the hypotheses of Propositions \ref{prop7.8} and \ref{prop7.9} by replacing the condition $\gamma(E) = \gamma_3(C)$ by $ \gamma(E) < 3$. 
In Proposition \ref{prop7.9} this leaves only the possibility that $g=14, \; d_1 = 5, \; d_2 = 7$ and there exists an extension \eqref{e7.4} with $d_F = 7$ and $d_N = 5$ giving $\gamma(E) = \frac{8}{3}$.

In Proposition \ref{prop7.8}, case I disappears and in case II we can allow also $d_3 = 9$. Here there is another possibility to consider, namely that \eqref{e7.5}
gives only $\gamma(E) \geq \frac{8}{3}$. However, $d_6 \geq \min \{ \Cl(C) + 12, g+5 \}$. If $g \geq 8$, this gives $d_6 \geq 13$ and now \eqref{e7.5} 
gives $\gamma(E) \geq 3$. In the case $g = 7$, it remains possible that there exists a semistable bundle $E$ of rank 3 and degree 12 with $h^0(E) =5$ having no proper
subbundle with $h^0 > \rk$. 
}
\end{rem}

\begin{rem} {\em
We now consider the possibility that $\gamma_3(C) < \Cl_3(C)$. 

If $\frac{d_3}{3} \leq \frac{d_2}{2}$, then by \eqref{eq6.1} this happens if and only if either
$$
\Cl_3(C) = 3, \; d_3 \leq 10 \quad \mbox{or} \quad \Cl_3(C) = \frac{8}{3}, \; d_3 = 9.
$$
If this happens, then $\gamma_3(C)$ is computed by a bundle $E$ with $d_E = d_3$ and $h^0(E) = 4$ and by no bundle with $h^0(E) = 5$, except possibly in case II of Proposition 
\ref{prop7.8} when $\Cl_3(C) = 3$.

In particular, $\gamma_3(C) < \Cl_3(C)$ and is computed by a bundle with $h^0 = 4$ in the following cases
\begin{itemize}
 \item $g= 7$ or 8, $\gamma_3(C) = \frac{7}{3}$;
\item $g = 13$, if $d_1=5,\; d_2 = 7, \; \gamma_3(C) = \frac{8}{3}$ (in this case $\Cl_3(C) = 3$ by Theorem \ref{thm5.5}  and $d_3 = 10$);
\item $9 \leq g \leq 12$ if $d_1 =5, \; d_2 = 7$ and either $\Cl_3(C) = 3$ or $d_3 = 9, \; \gamma_3(C) = \frac{7}{3}$ or $\frac{8}{3}$ (in these cases we know only that
$\frac{8}{3} \leq \Cl_3(C) \leq 3$ and $ 9 \leq d_3 \leq 10$);
\item $g=9,\; d_1 = 5, \; d_2 = 8, \gamma_3(C) = \frac{8}{3}$ (such curves exist and $\Cl_3(C)=3$ by \cite[Theorem 6.8]{ln2}; moreover $d_3=10$);
\item smooth intersections of two cubics in $\PP^3$, $\gamma_3(C) = \frac{7}{3}$.  
\end{itemize}

If $\frac{d_3}{3} > \frac{d_2}{2}$, there are no semistable bundles $E$ of rank 3 with $h^0(E) = 4$ and $\gamma(E) < 3$. The only possibility for a bundle computing 
$\gamma_3(C) < \Cl_3(C)$ is when $C$ is the normalization of a plane curve of degree 7 with one ordinary node or cusp and $h^0(E) = 5$ (see Proposition \ref{prop7.9} case II).
We do not know whether such a bundle exists.
} 
\end{rem}

\section{bundles of degree 12 with 5 sections}

Let $C$ be a curve of Clifford index 3. It is an interesting question to determine whether there exists a semistable bundle $E$ of rank 3 with $d_E = 12$, $h^0(E) = 5$ 
and hence $\gamma(E)=\frac83$. In this section we shall prove that $E$ exists when $g=7$. In doing so, we shall show that the existence of $E$ is equivalent to the 
existence of a semistable bundle $F$ of rank $2$ with $d_F=12$, $h^0(F)=5$ and hence $\gamma(F)=3$. 

Now suppose $g \geq 7$ and let $E$ be a semistable bundle of rank 3 with $d_E = 12, \; h^0(E) = 5$.

\begin{lem} \label{lem8.1}
 $E$ is stable.
\end{lem}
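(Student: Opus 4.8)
The plan is to argue by contradiction, exploiting the fact that $\Cl_2(C) = \Cl(C) = 3$ (valid since $\Cl(C) = 3 \leq 4$). Since $\mu(E) = 4$, if $E$ fails to be stable then it admits a proper nonzero subbundle $F$ with $\mu(F) = \mu(E) = 4$ (saturate a destabilizing subsheaf to make $F$ a subbundle), and $\rk F$ is $1$ or $2$. I will rule out both cases. The general facts I will use are that a same-slope subbundle of a semistable bundle is itself semistable and that the corresponding quotient is semistable as well, so that in each case one of the two rank-$2$ pieces is a semistable bundle of degree $8$ to which the definition of $\Cl_2(C)$ applies.

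First consider $\rk F = 1$, so $F$ is a line bundle of degree $4$ and $Q := E/F$ is semistable of rank $2$ and degree $8$. Since $d_1 \geq 5$, any line bundle of degree $4$ has $h^0 \leq 1$, so $h^0(Q) \geq h^0(E) - h^0(F) \geq 4$. Then $Q$ contributes to $\Cl_2(C)$ (indeed $\mu(Q) = 4 \leq g-1$ and $h^0(Q) \geq 4$), yet $\gamma(Q) = 6 - h^0(Q) \leq 2 < 3 = \Cl_2(C)$, a contradiction.

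The case $\rk F = 2$ is the same computation carried out on the subbundle rather than the quotient. Here $F$ is semistable of rank $2$ and degree $8$, while $N := E/F$ is a line bundle of degree $4$, hence $h^0(N) \leq 1$ and $h^0(F) \geq h^0(E) - h^0(N) \geq 4$. As before $\gamma(F) = 6 - h^0(F) \leq 2 < \Cl_2(C)$, again a contradiction. Since both possibilities are excluded, $E$ must be stable.

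I expect the only point requiring care to be the bookkeeping that allows us to invoke $\Cl_2(C)$: note that $E$ itself has $h^0(E) = 5 < 2n = 6$, so $E$ does \emph{not} contribute to $\Cl_3(C)$, and Lemmas \ref{lem2.2} and \ref{lem2.4} therefore cannot be applied to $E$ directly. The argument instead isolates a rank-$2$ semistable piece of degree $8$ (the quotient when $F$ is a line bundle, the subbundle when $F$ has rank $2$), whose $h^0 \geq 4$ forces $\gamma \leq 2$ and violates $\Cl_2(C) = 3$. The bound $h^0 \geq 4$ relies only on controlling the degree-$4$ line bundle via $d_1 \geq 5$, which is the main quantitative input.
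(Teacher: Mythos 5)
Your proof is correct and follows essentially the same route as the paper: split according to whether the degree-$4$ destabilizing subbundle has rank $1$ or $2$, use $d_1\ge5$ to bound the sections of the degree-$4$ line bundle piece, and use $\Cl_2(C)=3$ to bound the sections of the semistable rank-$2$ degree-$8$ piece. The paper phrases the conclusion as $h^0(E)\le4$ rather than as a violation of $\Cl_2(C)$ by the rank-$2$ piece, but this is the same computation.
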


\begin{proof}
If $E$ is not stable, then it possesses either a line subbundle $L$ of degree 4 or a stable rank-2 subbundle $F$ of degree 8.
In the first case, $h^0(L) \leq 1$ and $E/L$ is semistable of degree 8, so $h^0(E/L) \leq 3$ and $h^0(E)\le4$. In the second case, $h^0(F) \leq 3$ and $h^0(E/F) \leq 1$ and again $h^0(E)\le4$.    
\end{proof}

\begin{lem}  \label{lem8.2}
$E$ is generated. 
\end{lem}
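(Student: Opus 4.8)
The plan is to assume $E$ is not generated and derive a contradiction, working throughout with the constraints forced by stability. First I would record what Lemma~\ref{lem8.1} gives numerically: since $E$ is stable of slope $4$, every line subbundle has degree $\le 3$ and hence, as $d_1\ge5$, has $h^0\le1$, while every rank-$2$ subbundle has degree $\le 7$. These two facts will be used repeatedly to kill unwanted subsheaves.

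Next I would introduce the subsheaf $E'\subseteq E$ generated by $H^0(E)$; it is generated and satisfies $h^0(E')=h^0(E)=5$. The first task is to show $\rk E'=3$. Rank $1$ is impossible, since a line subbundle cannot have $h^0=5$. For rank $2$ one passes to the saturation $\bar E$, a rank-$2$ subbundle with $h^0\ge5$ and, by the above, no line subbundle with $h^0\ge2$; Lemma~\ref{lemPR} then forces $d_{\bar E}\ge d_6\ge12$, contradicting $d_{\bar E}\le7$. Hence $E'$ has rank $3$, is torsion-free and therefore locally free, and $E/E'$ is torsion of some length $\ell\ge1$, so $d_{E'}=12-\ell\le11$.

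Now I would apply Lemma~\ref{lemPR} to $E'$, which has $h^0=5=3+2$. If $E'$ had no proper subbundle $N$ with $h^0(N)>\rk N$, we would get $d_{E'}\ge d_6\ge12$, a contradiction; so such an $N$ exists. The case $\rk N=1$ contradicts the first paragraph, and the case $\rk N=2$ produces, after saturating in $E$, a rank-$2$ subbundle $\tilde N\subseteq E$ with $h^0\ge3$ and $d_{\tilde N}\le7$. A short argument, in which a destabilising line subbundle would have degree $\le3$ and thus force $h^0(\tilde N)\le2$, shows $\tilde N$ is semistable, and then Lemma~\ref{lema}(i),(ii) gives $d_{\tilde N}=d_2=7$ and $h^0(\tilde N)=3$. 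The quotient $M=E/\tilde N$ is a line bundle of degree $5$; since $h^0(E)=5$ it has $h^0(M)\ge2$, and the Clifford index bound ($\Cl(C)=3$) forces $h^0(M)=2$, so $M$ is a base-point-free pencil computing $d_1$ and $H^0(E)\to H^0(M)$ is surjective.

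The final step reduces generation of $E$ to generation of $\tilde N$: because $M$ is generated and $H^0(E)\twoheadrightarrow H^0(M)$, a fibrewise diagram chase shows that $E$ is generated at every point at which $\tilde N$ is generated. It therefore remains to prove that $\tilde N$ is generated, and here I would again use the dual span construction \eqref{eq2.6}: if the subsheaf of $\tilde N$ generated by its sections were proper, the degree and stability constraints would make it a rank-$2$ subsheaf of degree $\le6$, whose dual span $D(\cdot)$ is a line bundle of degree $\le6<d_2$ with $h^0\ge3$, contradicting the definition of $d_2$; the split case $\cO\oplus L''$ is excluded because it would yield a line subbundle of $E$ with $h^0\ge2$ and degree $\ge5$. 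I expect the main obstacle to be precisely this rank-$2$ configuration in the last two steps, that is, controlling the subbundle $\tilde N$ of degree $d_2$ and propagating its generation to $E$, rather than the routine exclusions of lower generic rank.
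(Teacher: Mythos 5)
Your argument is correct in substance, but it takes a genuinely different and much longer route than the paper. The paper's proof is a two-line reduction: since (by the slope bounds you record) no proper subbundle of $E$ can have $h^0=5$, non-generation of $E$ would produce a stable rank-$3$ bundle of degree $11$ with $h^0=5$, and this is ruled out by Propositions \ref{prop7.8} and \ref{prop7.9} (the structural results already established for rank-$3$ bundles with five sections). You instead give an essentially self-contained argument: after showing the generated subsheaf has full rank, you use Lemma \ref{lemPR} to extract a rank-$2$ subbundle $\tilde N$ of $E$, pin it down as semistable of degree $d_2=7$ with $h^0=3$ via Lemma \ref{lema}, and then propagate generation to $E$ from $\tilde N$ and the quotient pencil $M$. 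This buys independence from Section 7 at the cost of considerable extra work, and in effect re-proves for $\tilde N$ a generation statement of the same flavour as the one being established. One small imprecision: in your last step the obstruction to $h^0(N'^*)=0$ is not only the split case $\cO\oplus L''$; a nonzero map $N'\to\cO_C$ need not split. However, since $N'$ is generated, such a map must be surjective onto $\cO_C$, and its kernel is then a rank-$1$ subsheaf with $h^0\ge 2$, which contradicts your bound on line subbundles of $E$ just as in the split case; you should phrase it that way. With that repair the proof is complete, though if you are willing to quote Propositions \ref{prop7.8} and \ref{prop7.9} the whole lemma follows in two sentences.
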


\begin{proof}
It is easy to see that that a proper subbundle of $E$ cannot have $h^0 = 5$. So, if $E$ is not generated, 
there exists a stable bundle $E'$ of rank 3 with $d_{E'} = 11$ and $h^0(E') = 5$. This contradicts Propositions \ref{prop7.8} and \ref{prop7.9}.
\end{proof}

Recall the sequence \eqref{eq2.6}.

\begin{lem} \label{lem8.3}
$D(E)$ is stable of rank $2$ and degree $12$ with $h^0(D(E)) = 5$. 
\end{lem}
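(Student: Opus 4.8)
The plan is to treat the three assertions---rank and degree, stability, and $h^0(D(E))=5$---in turn, with stability as the crux. The rank and degree are immediate from the defining sequence \eqref{eq2.6}: since $h^0(E)=5$ and $\rk E=3$, the bundle $D(E)^*$ has rank $2$, and comparing first Chern classes in \eqref{eq2.6} gives $\det D(E)\simeq\det E$, so $d_{D(E)}=12$. Because $E$ is stable (Lemma \ref{lem8.1}) of positive slope, we have $h^0(E^*)=0$, and hence the general remarks following \eqref{eq2.6} yield $h^0(D(E))\ge h^0(E)=5$.

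For stability, I would argue by contradiction. Since $\mu(D(E))=6$, failure of stability produces a quotient line bundle $Q$ of degree $d_Q\le6$. As $D(E)$ is generated, $Q$ is generated, and it is non-trivial (otherwise $h^0(D(E)^*)>0$), so $h^0(Q)\ge2$ and $d_Q\ge d_1\ge5$. Moreover, if $h^0(Q)\ge3$ then $\gamma(Q)=d_Q-2(h^0(Q)-1)\le2<\Cl(C)$, which is impossible since $d_Q\le6\le g-1$; hence $h^0(Q)=2$ and $d_Q\in\{5,6\}$.

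I would then feed $G=Q$ into the diagram (3.6). Since $h^0(E^*)=0$, the map $\alpha$ is non-zero, and because $Q$ is a line bundle the kernel $N$ of $D(E)\to Q$ has rank $1$, so $\coker(\alpha)$, and therefore $\coker(\beta)$, is a torsion sheaf. The subspace $W\subseteq H^0(Q)$ generates $Q$ and satisfies $\dim W\le h^0(Q)=2$, forcing $\dim W=2$ and $D^*_{W,Q}\simeq Q^*$, a line bundle. Thus $\beta\colon E^*\to Q^*$ is non-zero with torsion cokernel, and dualizing produces a non-zero map $Q\to E$, whose saturation is a sub-line-bundle $L\subseteq E$ with $d_L\ge d_Q\ge5$. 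This contradicts the semistability of $E$, since $\mu(E)=4$. I expect this step---pinning down $\dim W$ and the rank of $D^*_{W,Q}$, then extracting the sub-line-bundle of large degree---to be the main technical point, though it runs closely parallel to Lemmas \ref{lem2.5} and \ref{lem4.1}.

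Finally, to show $h^0(D(E))=5$, suppose instead $h^0(D(E))\ge6$. Then $D(E)$ is semistable of rank $2$ with $\mu=6\le g-1$ and $h^0\ge4$, so it contributes to $\Cl_2(C)$, yet $\gamma(D(E))=8-h^0(D(E))\le2<3=\Cl_2(C)$, a contradiction. Hence $h^0(D(E))=5$, completing the proof.
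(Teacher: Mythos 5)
Your proposal is correct and follows essentially the same route as the paper: rank, degree and $h^0\ge5$ from \eqref{eq2.6}, stability via a putative quotient line bundle $Q$ of degree $\le6$ fed into diagram (3.6) to produce a line subbundle of $E$ of degree $\ge5$ contradicting Lemma \ref{lem8.1}, and $h^0=5$ from $\Cl_2(C)=3$. The only (harmless) difference is that you rule out $h^0(Q)\ge3$ via the Clifford index, whereas the paper simply invokes $d_Q\le6<d_2$.
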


\begin{proof}
It is clear that $D(E)$ has rank 2 and degree 12 and $h^0(E) \geq 5$. If $D(E)$ is not stable, it has a quotient line bundle $G$ with $d_G \leq 6$. So $h^0(G) \leq 2$. 
Consider the diagram (3.6). Since $G$ is generated, we must have $h^0(G) = \dim W = 2$. So $D_{W,G}$ is a line bundle of degree $d_G$. Since $h^0(E^*) = 0$, the map $\alpha$ 
is non-zero implying that $\coker (\alpha)$ and $\coker (\beta)$ are torsion sheaves. So $D_{W,G}$ generates a line subbundle $L$ of $E$ with $d_L \geq d_G \geq 5$, contradicting
the stability of $E$.

If $h^0(D(E)) > 5$, it would contradict the fact that $\Cl_2(C) = 3$.
\end{proof}

\begin{lem}  \label{lem8.4}
Let $F$ be a stable bundle of rank $2$ and degree $12$ with $h^0(F) = 5$. Then $F$ is generated and $D(F)$ is a stable rank-$3$ bundle of degree $12$ with $h^0(D(F)) = 5$. 
\end{lem}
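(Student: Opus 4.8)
The plan is to treat $F$ as the ``$E$'' of the dual span construction \eqref{eq2.6} and to follow the pattern already established for the opposite direction in Lemmas \ref{lem8.1}--\ref{lem8.3}.

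First I would prove that $F$ is generated. Let $F'\subseteq F$ be the subsheaf generated by $H^0(F)$; since $F$ is locally free on a smooth curve, $F'$ is locally free and $H^0(F')=H^0(F)$, so $h^0(F')=5$. If $\rk F'=1$, then the saturation of $F'$ is a line subbundle $L$ of $F$ with $h^0(L)\ge5$, forcing $d_L\ge d_4\ge10$; but a line subbundle of the stable bundle $F$ of degree $12$ has degree $\le5$, a contradiction. Hence $\rk F'=2$, and if $F$ were not generated, then $F/F'$ would be a nonzero torsion sheaf, so $d_{F'}\le11$. Any line subbundle of $F'$ saturates to a line subbundle of $F$, hence has degree $\le5$; so $F'$ is semistable. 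Then $F'$ contributes to $\Cl_2(C)$ (note $\mu(F')\le\frac{11}2\le g-1$) and $\gamma(F')=\frac12(d_{F'}-6)\le\frac52<3=\Cl_2(C)$, contradicting the definition of $\Cl_2(C)$. Therefore $F$ is generated.

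Next I would record the easy structural facts. Since $F$ is generated of rank $2$ with $h^0(F)=5$, the bundle $D(F)$ defined by \eqref{eq2.6} has rank $3$ and degree $12$, is itself generated, and satisfies $h^0(D(F)^*)=0$. As $F$ is stable of positive slope, $h^0(F^*)=0$, whence $h^0(D(F))\ge h^0(F)=5$. The heart of the proof is stability of $D(F)$, which I expect to be the main obstacle. Suppose $D(F)$ is not stable. A destabilizing quotient line bundle $G$ has $d_G\le4$ and is generated, so either $d_G\ge d_1\ge5$ (impossible) or $G\cong\cO_C$, which dualizes to a nonzero section of $D(F)^*$, contradicting $h^0(D(F)^*)=0$. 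Hence $D(F)$ has a stable generated rank-$2$ quotient $G$ with $d_G\le8$; such a $G$ has $h^0(G)\ge3$, while $h^0(G)\ge4$ would give $\gamma(G)\le2<\Cl_2(C)$. So $h^0(G)=3$ and $d_G\ge d_2\ge7$.

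To eliminate this last case I would invoke the diagram (3.6) with $F$ in the role of $E$. Since $h^0(F^*)=0$, the map $\alpha$ is non-zero, and as $N=\ker(D(F)\to G)$ has rank $1$, the cokernel $\coker(\alpha)\cong\coker(\beta)$ is a torsion sheaf. Because $G$ is a stable rank-$2$ bundle generated by the sections in $W$, we must have $\dim W=3=h^0(G)$, so $W=H^0(G)$ and $D^*_{W,G}=D(G)^*$ is a line bundle; torsionness of $\coker(\beta)$ then forces $\beta\colon F^*\to D(G)^*$ to be non-zero. Dualizing yields a non-zero map $D(G)\to F$ from the line bundle $D(G)$, whose image saturates to a line subbundle of $F$ of degree $\ge d_G\ge7>5$, contradicting the stability of $F$. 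Hence $D(F)$ is stable. Finally, if $h^0(D(F))\ge6$, then $D(F)$ contributes to $\Cl_3(C)$ with $\gamma(D(F))\le\frac13(12-6)=2<\frac83$, contradicting $\Cl_3(C)\ge\frac83$; therefore $h^0(D(F))=5$. The only delicate point is the rank-$2$ quotient analysis via the diagram (3.6); everything else is a direct adaptation of Lemmas \ref{lem8.1}--\ref{lem8.3}.
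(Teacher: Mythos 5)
Your proof is correct and follows essentially the same route as the paper's: generation of $F$ via a degree $\le 11$ rank-2 subsheaf contradicting $\Cl_2(C)=3$, exclusion of line-bundle and stable rank-2 quotients of $D(F)$ using diagram (3.6) to produce a line subbundle of $F$ of degree $\ge d_2\ge 7$, and $h^0(D(F))=5$ from $\Cl_3(C)\ge\frac83$. The extra details you supply (the rank analysis of the subsheaf generated by $H^0(F)$, and the explicit final step for $h^0(D(F))$) are accurate elaborations of steps the paper leaves terse.
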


\begin{proof}
If $F$ is not generated, there exists a stable bundle of rank 2 and degree 11 with $h^0 = 5$, contradicting the fact that $\Cl_2(C) = 3$.

Certainly $D(F)$ is generated and $h^0(D(F)^*) = 0$. Any quotient line bundle of $D(F)$ has $h^0 \geq 2$ and hence degree $\geq 5$. So, if $D(F)$ is not stable, 
it possesses a generated stable rank-2 quotient bundle $G$ of degree $\leq 8$. We use diagram (3.6) with $E$ replaced by $F$. Since $\Cl_2(C) = 3$, $h^0(G) \leq 3$ and hence $=3$, and $\dim W =3$.
Moreover, $d_G \geq d_2 \geq 7$. Since $\alpha \neq 0$, $D_{W,G}$ generates a line subbundle of $F$ of degree $d_F\geq d_G \geq 7$, contradicting the stability of $F$. 
\end{proof}

Combining the above lemmas, we immediately obtain the following theorem.

\begin{theorem} \label{thm8.5}
Let $C$ be a curve of Clifford index $3$. The dual span construction gives a bijection between the set of stable rank-$3$ bundles of degree $12$ with $h^0 = 5$ and the set of stable 
rank-$2$ bundles of degree $12$ with $h^0 = 5$. 
\end{theorem}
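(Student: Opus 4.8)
The plan is to assemble the bijection from the four preceding lemmas, so the proof is essentially bookkeeping: I must exhibit a map in each direction and check that the two composites are the identity. The underlying mechanism is the dual span functor $D(-)$ together with the biduality relation $D(D(E))\simeq E$ noted after \eqref{eq2.6}, which holds precisely when $h^0(E^*)=0$ and $h^0(D(E))=h^0(E)$.

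First I would fix the two sets explicitly: let $\mathcal{A}$ be the set of stable rank-$3$ bundles of degree $12$ with $h^0=5$, and $\mathcal{B}$ the set of stable rank-$2$ bundles of degree $12$ with $h^0=5$. Given $E\in\mathcal{A}$, Lemma \ref{lem8.2} shows $E$ is generated, so $D(E)$ is defined, and Lemma \ref{lem8.3} shows $D(E)\in\mathcal{B}$. Conversely, given $F\in\mathcal{B}$, Lemma \ref{lem8.4} shows $F$ is generated and $D(F)\in\mathcal{A}$. Thus $D$ carries $\mathcal{A}$ into $\mathcal{B}$ and $\mathcal{B}$ into $\mathcal{A}$, giving candidate maps in both directions.

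Next I would verify that these maps are mutually inverse using biduality. For $E\in\mathcal{A}$, note $h^0(E^*)=0$ (a stable bundle of positive slope has no sections of its dual) and, by Lemma \ref{lem8.3}, $h^0(D(E))=5=h^0(E)$; hence the remark following \eqref{eq2.6} gives $D(D(E))\simeq E$. The same argument applied to $F\in\mathcal{B}$, using $h^0(F^*)=0$ and $h^0(D(F))=5=h^0(F)$ from Lemma \ref{lem8.4}, yields $D(D(F))\simeq F$. Therefore $D\colon\mathcal{A}\to\mathcal{B}$ and $D\colon\mathcal{B}\to\mathcal{A}$ are inverse bijections.

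I do not expect any serious obstacle here, since all the substantive content—generation, stability of the dual span, and the exact count $h^0=5$ on both sides—has already been established in Lemmas \ref{lem8.1} through \ref{lem8.4}. The only point requiring a little care is confirming the hypotheses of the biduality statement, namely $h^0(E^*)=0$ (and $h^0(F^*)=0$) together with the equality $h^0(D(E))=h^0(E)$; the vanishing follows from stability and positive degree, while the equality is exactly the content of the stability lemmas, which pin $h^0(D(E))$ to $5$ rather than allowing it to be strictly larger. With these checked, the theorem follows immediately by combining the lemmas.
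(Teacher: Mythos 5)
Your proposal is correct and matches the paper's argument, which simply states that the theorem follows immediately by combining Lemmas \ref{lem8.1}--\ref{lem8.4}; you have merely made explicit the biduality check $D(D(E))\simeq E$ via the remark following \eqref{eq2.6}, which the paper leaves implicit.
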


\begin{theorem} \label{thm8.6}
Let $C$ be a curve of genus $7$ with $\Cl(C) = 3$. Then
\begin{itemize}
 \item[(i)] $d_1 = 5, \; d_2 = 7,\; d_3 = 9$;
\item[(ii)] $\Cl_3(C) = 3$;
\item[(iii)] the smallest degree $d_E$ for which there exists a semistable bundle $E$ of rank $3$ with $h^0(E) = 4$ is $d_E=9$ and any such bundle 
is stable and generated and of the form
$E \simeq E_L$, where $E_L$ is given by the exact sequence
$$
0 \ra E_L^* \ra H^0(L) \otimes \cO_C \ra L \ra 0
$$
with $L$ a line bundle of degree $9$ with $h^0(L) = 4$;
\item[(iv)] the smallest degree $d_E$ for which there exists a semistable bundle $E$ of rank $3$ with $h^0(E) = 5$ is $d_E=12$. 
\end{itemize}
\end{theorem}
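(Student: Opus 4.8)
The plan is to handle the four assertions in turn, extracting (i) from the gonality inequalities, identifying (ii) with an earlier theorem, and using the dual span construction \eqref{eq2.6} for (iii) and (iv). For (i) I would simply read off the values: with $g=7$ and $\Cl(C)=3$ we have $d_1=\Cl(C)+2=5$ or $\Cl(C)+3=6$, and since there is no curve of Clifford dimension $>1$ of genus $7$ we get $d_1=5$; the two-sided bound $\min\{\Cl(C)+2r,\,g+r-1\}\le d_r\le g+r-[\frac{g}{r+1}]$ collapses to $d_2=7$ (for $r=2$ both ends equal $7$) and $d_3=9$ (for $r=3$ both ends equal $9$). Part (ii) is exactly Theorem \ref{thm4.6}.

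For (iii) I would first pin down the minimal degree: by Lemma \ref{lema}(iii) a semistable rank-$3$ bundle of degree $d<\min\{d_3,\frac{3d_2}2\}=9$ has $h^0\le3$, while $\frac{d_3}3=3\le\frac{d_2}2=\frac72$ and $d_3=9\ne3d_1$ let Lemma \ref{lema}(iv) produce a semistable bundle of degree $9$ with $h^0=4$, so the minimal degree is $9$. For an arbitrary such $E$ I would prove stability by excluding a stable Jordan--Hölder factor of slope $3$: a rank-$1$ factor has degree $3<d_1$ hence $h^0\le1$, and a rank-$2$ factor has degree $6<d_2$ hence $h^0\le2$ by Lemma \ref{lema}(i); in either case additivity of $h^0$ forces $h^0(E)\le3$, a contradiction. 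For generatedness I would pass to the subsheaf $E'$ generated by the sections (torsion-free, hence a subbundle, with $h^0(E')=4$); semistability and the values of $d_1,d_2$ show $E'$ has no subbundle $N$ with $h^0(N)>\rk N$, so Lemma \ref{lemPR} gives $d_{E'}\ge d_3=9$, which is incompatible with $d_{E'}\le8$ were $E$ not generated. Finally, applying \eqref{eq2.6} to the generated bundle $E$ yields a line bundle $L=D(E)$ of degree $9$ with $h^0(L)\ge4$; since $h^0(L)\ge5$ would force $d_L\ge d_4=10$, in fact $h^0(L)=4$, and because $h^0(E^*)=0$ and $h^0(D(E))=h^0(E)$ the reflexivity $D(D(E))\simeq E$ recorded after \eqref{eq2.6} gives $E\simeq D(L)=E_L$.

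For the lower bound in (iv) I would use $\gamma_3(C)=\min\{\Cl_3(C),\frac13(d_3-2)\}=\frac73$ from \eqref{eq6.1}. Any semistable rank-$3$ bundle $E$ with $h^0(E)=5$ and $d_E\le11$ satisfies $\mu(E)\le\frac{11}3\le g-1$, so it contributes to $\gamma_3(C)$ and $\gamma(E)=\frac13(d_E-4)\ge\frac73$, forcing $d_E=11$; but then $E$ computes $\gamma_3(C)$ with $h^0=5$, and Proposition \ref{prop7.8} (applicable since $\frac{d_3}3\le\frac{d_2}2$) allows this only in its case I ($d_2=8$) or case II ($d_3=10$), neither of which holds for $g=7$. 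Hence no semistable rank-$3$ bundle with $h^0=5$ has degree $\le11$.

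The existence of a degree-$12$ example is where the real work lies, and it is the step I expect to be hardest. By Theorem \ref{thm8.5} it is enough to construct a stable rank-$2$ bundle $F$ of degree $12$ with $h^0(F)=5$, and I would look for it as a non-split extension $0\to A\to F\to K_C\otimes A^*\to0$ with $A$ a line bundle of degree $5$ and $h^0(A)=2$; then $\det F=K_C$, and $B:=K_C\otimes A^*$ has degree $7$ with $h^0(B)=h^1(A)=3$. Here $h^0(F)=h^0(A)+\dim\ker\delta$, where the coboundary $\delta\colon H^0(B)\to H^1(A)\cong H^0(B)^*$ is, under Serre duality, the symmetric form on $H^0(B)$ obtained by pairing the extension class against the multiplication map $\operatorname{Sym}^2H^0(B)\to H^0(B^{\otimes2})$. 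Since $\dim\operatorname{Sym}^2H^0(B)=6<8=h^0(B^{\otimes2})$, that map is not surjective, so there is a space of dimension $\ge2$ of nonzero classes with $\delta=0$ and hence $h^0(F)=5$. The delicate point is to choose such a class so that $F$ is also stable, i.e. so that it does not split over any sub-line-bundle of $B$ of degree $6$ or $7$; this amounts to showing that the $\ge2$-dimensional space of classes with $\delta=0$ is not contained in the low-dimensional locus of classes lifting a destabilizing subsheaf. Granting this, $D(F)$ is the required stable rank-$3$ bundle by Lemma \ref{lem8.4}, which completes (iv).
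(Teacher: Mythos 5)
Your treatment of (i)--(iii) and of the lower bound in (iv) is correct and essentially coincides with the paper's. For (i) the two-sided gonality bounds do collapse as you say; (ii) is Theorem \ref{thm4.6}; for (iii) the paper likewise gets the minimal degree from Lemma \ref{lema}(iii),(iv) (via Proposition \ref{prop6.3}), proves stability by the same slope-$3$ factor analysis, proves generation by essentially your Paranjape--Ramanan argument (note only that the subsheaf generated by the sections need not a priori have full rank, so you should also dispose of the rank $1$ and $2$ cases, which the same use of $d_1=5$, $d_2=7$ and Lemma \ref{lemPR} handles), and then applies the dual span construction exactly as you do. For the lower bound in (iv) the paper quotes Remark \ref{rem7.10} (which gives $\gamma(E)\ge\frac83$ directly), whereas you rule out degree $\le 10$ via \eqref{eq6.1} and degree $11$ via Proposition \ref{prop7.8}; both routes rest on the same underlying analysis and your version is sound.

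The one genuine gap is the existence of a stable rank-$2$ bundle $F$ of degree $12$ with $h^0(F)=5$, which you correctly identify as the crux after reducing via Theorem \ref{thm8.5}. Your Bertram--Feinberg-style count is right: with $\det F=K_C$, the classes in $\Ext^1(K_C\otimes A^*,A)\cong H^0(B^{\otimes 2})^*$ for which all sections of $B$ lift form the annihilator of the image of $\operatorname{Sym}^2H^0(B)\to H^0(B^{\otimes 2})$, which has dimension at least $8-6=2$, so nonzero classes with $h^0(F)=5$ exist. But you then write ``granting this'' for stability, and that step is not a formality: one must bound the dimension of the locus of classes in that $\ge 2$-dimensional annihilator which come from extensions admitting a sub-line-bundle of degree $\ge 6$, and this requires an actual argument (e.g.\ a parameter count over the possible destabilizing line bundles, using $\Cl(C)=3$ to control their $h^0$). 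The paper does not carry this out either; it cites \cite{bf} and \cite[Proposition 7.7]{ln3}, which is precisely where this stability verification lives. So your proposal is complete only modulo importing that result; as a self-contained argument it stops one step short of establishing (iv).
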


\begin{proof}
(i) is obvious and (ii) is Theorem \ref{thm4.6}.

(iii): The fact that $d_E = d_3 = 9$ is included in Proposition \ref{prop6.3}. If $E$ is not stable, then it possesses 
either a line subbundle $M$ of degree 3 with $E/M$ semistable or a semistable rank-2 subbundle $F$ of degree 6. In either case it is easy to see that $h^0(E) \leq 3$. 

To see that $E$ is generated, note first that it is easy to see that no proper subbundle can have $h^0 = 4$. So if $E$ is not generated, there exists a stable bundle of rank $3$ and degree 8 with $h^0 = 4$, contradicting the fact that $d_E=9$ is the smallest degree for which such a bundle exists.

So we can apply the dual span construction to $E$. Then $D(E)$ is a line bundle $L$ of degree 9 with $h^0(L) \geq 4$ and hence $= 4$. It follows at once that $E \simeq E_L$.

(iv): By Remark \ref{rem7.10}, we have $\gamma(E) \geq \frac{8}{3}$. So $d_E \geq 12$. According to Theorem \ref{thm8.5}, it is sufficient to find a stable rank-2 bundle of degree
12 with $h^0 = 5$. For examples of this, see \cite{bf} or \cite[Proposition 7.7]{ln3}.
\end{proof}

\begin{rem}  \label{rem8.7} {\em
If $C$ has genus 8 with $\Cl(C) =3$, then 
$$
d_1 = 5, \quad d_2 = 7 \; \mbox{or} \; 8 \quad \mbox{and} \quad d_3 =9.
$$
Parts (ii) and (iii) of Theorem \ref{thm8.6} remain true. If $E$ is a semistable bundle of rank 3 with
$h^0(E) =5$ and $d_2 =7$, then $d_E \geq 12$, but we do not know whether such a bundle with 
$d_E = 12$ exists. If $d_2 = 8$, then $d_E \geq 13$. In any case there are no bundles with $h^0 =5$ which compute $\gamma_3(C)$. 
}
\end{rem}

\begin{rem} {\em
If $C$ has genus 9 with $\Cl(C) =3$, then $d_1 =5$, and using Serre duality one sees that either 
$d_2 = 7,\; d_3 = 9$ or $d_2 = 8, \; d_3 = 10$. 
It follows from Proposition \ref{prop7.8} that there does not exist a bundle with $h^0 =5$ computing $\gamma_3(C)$.
}
\end{rem}

\section{Comments and questions}

{\bf Comment 9.1.} Let $C$ be the normalization of an irreducible plane curve of degree 7 with 
$\Cl(C) = 3$. Let $H$ denote the hyperplane bundle on $C$ and write $E_H := D(H)$. Consider extensions of the form
\begin{equation}  \label{eq9.1}
0 \ra E_H \ra E \ra H \ra 0
\end{equation}
defining bundles $E$ of rank 3 with $d_E = 14$.
If all sections of $H$ lift to $E$, then $h^0(E) = 6$. So, by Proposition \ref{prop3.1}, $E$ cannot be semistable.
However, according to \cite[Proposition 2.6]{ln2}, in any non-trivial extension \eqref{eq9.1} for which $h^0(E) = 6$, $E$ is semistable. Moreover, 
such an extension exists if and only if $h^0(E_H \otimes E_H) \geq 10$. It follows that $h^0(E_H\otimes E_H)\le9$ (in fact $h^0(E_H\otimes E_H)=9$). This gives a negative
answer to \cite[Question 8.4]{ln2} in this case.\\

{\bf Comment 9.2.} We have substantially reduced the list of curves of Clifford index 3 for which it is 
possible that $\Cl_3(C) = \frac{8}{3}$. In fact, any such curve must have genus $g$ with $9 \leq g \leq 12$ and is representable by a singular plane septic (compare \cite[Question 8.7]{ln2}).\\

{\bf Question 9.3.} On a curve of genus 8 with Clifford index 3, what is the minimal degree $d_E$ of a semistable bundle $E$ of rank 3 with $h^0(E) = 5$?
(At the moment we know that $d_E \geq 12$ and if $d_2 = 8$, then $d_E \geq 13$ (see Remark \ref{rem8.7}).)\\

{\bf Question 9.4.} Let $C$ be a curve of genus 14 with Clifford index 3.
 Does there exist a semistable $E$ with $d_E = 12,\; h^0(E) = 5$? (see Proposition \ref{prop7.9} II.)


\begin{thebibliography}{CAV}
\bibitem{bf} A. Bertram and B. Feinberg. 
\emph{On stable rank two bundles with canonical determinant and many sections}. 
Algebraic geometry (Catania, 1993/Barcelona, 1994), Dekker, Lect. Notes in Pure and Appl. Math. 200 (1998), 259–-269.
\bibitem{ck} M. Coppens and T. Kato:
\emph{The gonality of smooth curves with plane models}.
Manuscripta Math. 70 (1990), 5--25.
\bibitem{cm} M. Coppens and G. Martens:
\emph{Secant spaces and Clifford's theorem}.
Compositio Math. 78 (1991), 193--212.
\bibitem{fo1} G. Farkas and A. Ortega:
\emph{The maximal rank conjecture and rank two Brill-Noether theory}.
Pure and Appl. Math. Quarterly 7, no.4 (2011), 1265--1296.
\bibitem{fo2} G. Farkas and A. Ortega:
\emph{Higher rank Brill-Noether theory on sections of K3-surfaces}.
Internat. J. Math. 23 (2012), 1-18.
\bibitem{h} R. Hartshorne:
\emph{Generalized divisors on Gorenstein curves and a theorem of Noether}. J. Math. Kyoto Univ. 26-3 (1986), 375-386.
\bibitem{lmn} H. Lange, V. Mercat and P. E. Newstead:
\emph{On an example of Mukai}.
Glasgow Math. J. 54 (2012), 261-271.
\bibitem{ln} H. Lange and P. E. Newstead: 
\emph{Clifford indices for vector bundles on curves}.
In: A. Schmitt (Ed.) Affine Flag Manifolds and Principal Bundles. Trends in Mathematics, 165--202. Birkh\"auser (2010).
\bibitem{ln1} H. Lange and P. E. Newstead:
\emph{Generation of vector bundles computing Clifford indices}.
Arch. Math. 94 (2010), 529-537.
\bibitem{ln4} H. Lange and P. E. Newstead: 
\emph{Further examples of stable bundles of rank 2 with 4 sections}.
Pure and Appl. Math. Quarterly 7, no.4 (2011), 1517--1528.
\bibitem{ln3} H. Lange and P. E. Newstead:
\emph{Vector bundles of rank 2 computing Clifford indices}.
arXiv1012.0469, to appear in Comm. in Algebra.
\bibitem{cl1} H. Lange and P. E. Newstead: 
\emph{Bundles of rank 2 with small Clifford index on algebraic curves}.
In: Geometry and Arithmetic, EMS Congress Reports, European Mathematical Society, 2012, 267--281.
\bibitem{ln2}H. Lange and P. E. Newstead:
\emph{On bundles of rank 3 computing Clifford indices}.
arxiv 1201.2333, to appear in Kyoto J. Math. (memorial issue for M. Maruyama). 
\bibitem{m} V. Mercat:
\emph{Clifford's theorem and higher rank vector bundles}.
Int. J. Math. 13 (2002), 785-796.
\bibitem{pr} K. Paranjape and S. Ramanan: 
\emph{On the canonical ring of a curve}.
Algebraic Geometry and Commutative Algebra in Honor of Masayoshi Nagata (1987), 503-516. 
\end{thebibliography}
\end{document}